\documentclass{amsart}[12 pt]
\usepackage{amsmath, amsfonts, amssymb, amsthm, euscript, amscd, latexsym, mathrsfs, bm, color, bbm, xcolor,mathtools}
 \usepackage{titlesec}
 \usepackage{enumitem}
\usepackage{lipsum}
 \usepackage[pdftex,colorlinks=true,
                       pdfstartview=FitV,
                       linkcolor=blue,
                       citecolor=blue,
                       urlcolor=blue,
           ]{hyperref}

 \sloppy
 \frenchspacing
 \topmargin=-0.7cm
 \textwidth=12.9truecm
 \textheight=21.75truecm

\def\eee#1{ \begin{equation*} \begin{cases} #1 \end{cases} \end{equation*} }

\def\aa#1{ \begin{align*} #1 \end{align*} }
\def\aaa#1{ \begin{align} #1 \end{align} }
\def\mm#1{ \begin{multline*} #1 \end{multline*} }
\def\mmm#1{ \begin{multline} #1 \end{multline} }
\def\mml#1{ \begin{multlined}[b] #1 \end{multlined} }

\newtheorem{thm}{\sc Theorem}
\newtheorem{lem}{\sc Lemma}
\newtheorem{cor}{\sc Corollary}

\newtheorem{pro}{\sc Proposition}
\newtheorem{rem}{\sc Remark}

\newtheorem*{thm*}{\sc Theorem}


\newcommand{\eps}{\varepsilon}
\newcommand{\pl}{\partial}
\newcommand{\gt}{\geqslant}
\newcommand{\lt}{\leqslant}

\renewcommand{\leq}{\leqslant}
\newcommand{\te}{\theta}
\newcommand{\sub}{\subset}
\newcommand{\dl}{\delta}
\newcommand{\al}{\alpha}
\newcommand{\gm}{\gamma}
 \newcommand{\Gm}{\Gamma}
 \newcommand{\Dl}{\Delta}
 \newcommand{\la}{\lambda}
 \newcommand{\La}{\Lambda}
 \newcommand{\sg}{\sigma}
\newcommand{\dd}{\diagdown}

\newcommand{\mc}{\mathcal}
\newcommand{\Om}{\Omega}
\newcommand{\mf}{\mathfrak}

\newcommand{\C}{{\rm C}}

\newcommand{\td}{\tilde}
\newcommand{\ox}{\otimes}

\newcommand{\E}{\mathbb E}
\newcommand{\we}{\wedge}

\newcommand{\x}{\times}
\newcommand{\mto}{\mapsto}
\newcommand{\PP}{\mathbb P}

\newcommand{\tr}{{\rm tr}}

\newcommand{\rf}{\eqref}
\newcommand{\bi}{\begin{itemize}}
\newcommand{\ei}{\end{itemize}}
\newcommand{\biz}{\begin{itemize}[leftmargin=*]}

\newcommand{\ovl}{\overline}
\newcommand{\F}{{\mathbb F}}
\newcommand{\G}{{\mathbb G}}

\newcommand{\lap}{\Delta}

\newcommand{\lb}{\label}
\newcommand{\fdot}{\,\cdot\,}
\newcommand{\hsp}{\hspace{-2mm}}
\newcommand{\hs}{\hspace{-1mm}}
\newcommand{\cp}{\varkappa}


\def\RR{\mathbb{R}} 
\def\begeq{\begin{equation} \begin{cases}} 
\def\endeq{ \end{cases} \end{equation}}
\def\eq#1{ \begeq #1 \endeq }
\def\bege{\begin{equation*} \begin{cases}} 
\def\ende{ \end{cases} \end{equation*}}
\def\eqq#1{ \bege #1 \ende}

\newcommand{\ro}{\varrho}


\newcommand{\sig}{\varsigma}
\newcommand{\tet}{\vartheta}

\def\Rnu{{\mathbb R}}

\def\ffi{\varphi}

\def\suml{\sum\limits}

\def\com#1{}

\long\def\symbolfootnote[#1]#2{\begingroup%
\def\thefootnote{\fnsymbol{footnote}}\footnote[#1]{#2}\endgroup}
\titleformat{\section}[hang]{\large\bfseries}{\thesection.}{1ex}{}{}
\titleformat{\subsection}[hang]{\normalsize\bfseries}{\thesubsection}{2ex}{}{}
\titleformat{\subsubsection}[hang]{\small\bfseries}{\thesubsubsection}{2ex}{}{}

\include{srctex.sty}

\allowdisplaybreaks

\begin{document}


\title[FBSDEs with jumps and classical solutions to nonlocal PDEs]{Forward--backward SDEs with jumps and  
classical solutions to nonlocal quasilinear parabolic PDEs}

\author{Evelina Shamarova}
\address{Departamento de Matem\'atica, Universidade Federal da Para\'iba, Jo\~ao Pessoa, Brazil}
\email{evelina@mat.ufpb.br}

\author{Rui S\'a Pereira}
\address{Departamento de Matem\'atica, Universidade do Porto, Rua do Campo Alegre, Porto, Portugal}
\email{manuelsapereira@gmail.com}

\maketitle

{\footnotesize
{\noindent \bf Keywords:} 
Forward--backward SDEs with jumps, Non-local quasilinear parabolic PDEs, \\
Partial integro-differential equations

\vspace{1mm}

{\noindent\bf 2010 MSC:} 
60H10, 60G51, 45K05, 35K45, 35K51, 35K59 
}

\begin{abstract}
We obtain an existence and uniqueness theorem for fully coupled 
forward--backward SDEs (FBSDEs) with jumps via the classical solution
to the  associated  quasilinear parabolic  partial integro-differential equation (PIDE),
and provide the explicit form of the FBSDE solution. 
Moreover, we embed the associated PIDE into a suitable class of non-local quasilinear parabolic PDEs
which allows us to extend the methodology of Ladyzhenskaya et al (1968)
to non-local PDEs of this class. Namely, we obtain 
the existence and uniqueness of a classical solution to both the Cauchy problem and 
the  initial--boundary value problem for non-local quasilinear parabolic second-order PDEs. 
\end{abstract}



\section{Introduction}
One of the well-known tools to solve FBSDEs driven by a Brownian motion is their link
to quasilinear parabolic PDEs which, by means of It\^o's formula, allows  to obtain
the explicit form of the FBSDE solution via the classical solution to the associated PDE \cite{CS, ma1, pardeux-tang, peng}. 
However, if we are concerned with FBSDEs with jumps, the associated PDE becomes a PIDE whose
coefficients contain non-local dependencies on the solution.
To the best of our knowledge, there are no results on the solvability, in the classical sense, of PIDEs
appearing in connection to FBSDEs with jumps.  

In this work, we obtain the existence and uniqueness of a classical solution
for a class of non-local quasilinear parabolic PDEs, which includes PIDEs of interest, and apply this
result to obtain the existence and uniqueness of solution to fully coupled FBSDEs driven by an 
$n$-dimensional Brownian motion 
and a compensated Poisson random measure on an arbitrary time interval $[0,T]$:
\eq{
\lb{eqFBSDE}
X_t = x + \int_0^t f(s,X_s,Y_s,Z_s, \td Z_s) \,ds +\int_0^t \sg (s,X_s,Y_s)\, dB_s \\
\hspace{6.4cm} + \int_0^t \int_{{\Rnu^l_*}} \ffi(s,X_{s-},Y_{s-},y)\, \td N(ds, dy),\\
Y_t = h(X_T) + \int_t^T g(s,X_s,Y_s,Z_s, \td Z _s) \,ds -\int_t^T Z_s \, dB_s 
- \int_t^T \int_{{\Rnu^l_*}} \td Z_s (y)  \,  \td N(ds, dy).
}
Above, the forward SDE is $n$-dimensional and the backward SDE (BSDE) is $m$-dimensional;
 the coefficients $f(t,x,u,p,w)$, $g(t,x,v,p,w)$, $\sg(t,x,u)$, and $\ffi(t,x,u,y)$ are functions of appropriate dimensions 
 whose argument $(t,x,u,p,w)$ belongs to the space
 $[0,T] \x \Rnu^n\x \Rnu^m \x \Rnu^{m \x n} \x L_2(\nu, {\Rnu^l_*} \to \Rnu^m)$, where
 $\nu$ is the intensity of the Poisson random measure involved in \rf{eqFBSDE} and
 $\Rnu^l_* = \Rnu^l -\{0\}$.
 
Our second object of interest is 
the following $\RR^m$-valued non-local quasilinear parabolic  PDE  on $[0,T]\x \Rnu^n$  
associated to FBSDE \rf{eqFBSDE}
\aaa{
\label{PDE}
- \sum_{i,j=1}^{n} a_{ij} (t,x,u)\pl^2_{x_i x_j}u + \sum_{i=1}^n a_i (t,x,u, \pl_x u, \tet_u)\pl_{x_i} u 
+ a(t,x,u,  \pl_x u,\tet_u)  +  \pl_t u=0.
}
The coefficients of \rf{PDE}  are expressed via the coefficients of \rf{eqFBSDE} as follows
\eq{
\lb{coeff}
&a_{ij}(t,x,u)=   \frac12 \sum\nolimits_{k=1}^n \sg_{ik}\sg_{jk}(T-t,x,u), \\
&a_i(t,x,u,p,w)=   \int_Z \ffi_i (T-t,x,u,y)   \nu(dy) -   f_i\big(T-t,x,u, p\, \sg(T-t,x, u), w\big), \\
&a(t,x,u,p,w)= -   g\big(T-t,x,u, p\,\sg(T-t,x,u), w\big) - \int_Z   w(y)  \, \nu (dy), \\
&\tet_u(t,x) = u(t,x+\ffi(T-t,x,u(t,x),\fdot)) - u(t,x),
}
where the support  $Z$ of the function $y \mto \ffi(t,x,u,y)$ is assumed to have a finite $\nu$-measure
for each $(t,x,u)\in [0,T]\x \Rnu^n \x \Rnu^m$.
In \rf{PDE}, $\pl^2_{x_i x_j}\!u$, $\pl_{x_i}\!u$, $\pl_t u$, $u$, and $\tet_u$ are evaluated at $(t,x)$.
Non-local PDE \rf{PDE} is assumed to be uniformly  parabolic, i.e.,
for all $\xi\in\Rnu^n$, it holds that
$\hat \mu(|u|) |\xi|^2 \leq \sum_{i,j=1}^n a_{ij}(t,x,u) \xi_i \xi_j \leq \mu(|u|) |\xi|^2$,
where $\mu$ and $\hat \mu$ are non-decreasing, and, respectively, non-increasing functions.

BSDEs and FBSDEs  with jumps have been studied by many authors, e.g., 
\cite{pardoux97,  li17, li-peng09, LiWei, nualart,  tang, wu99, wu03}. Existence and uniqueness results
for fully coupled FBSDEs with jumps were previously obtained  in  \cite{wu99},  \cite{wu03}, 
and, on a short time interval, in  \cite{LiWei}.
The main assumption in \cite{wu99} and  \cite{wu03} is the so-called monotonicity assumption
(see, e.g., \cite{wu99}, p. 436, assumption (H3.2)). This is a rather technical condition
that fails to hold for most of naturally appearing functions. 

We remark that our result on the existence and uniqueness of solution to FBSDE \rf{eqFBSDE} holds
on a time interval of arbitrary length and without any sort of monotonicity assumptions.
Our assumptions on the FBSDE coefficients are formulated in a way that makes it possible 
to solve, in the classical sense,
the associated PIDE, which is a particular case of
non-local PDE \rf{PDE}.


Importantly, we obtain a link between
the solution to FBSDE \rf{eqFBSDE} and the solution to the associated PIDE.
A similar link in the case of FBSDEs driven by a Brownian motion was established by Ma, Protter, and Yong \cite{ma1}, 
and the related result on the solvability of FBSDEs is known as the four step scheme. 
The main tool to establish this link (and, consequently,
 to solve Brownian FBSDEs) was the existence of a classical solution to traditional 
 quasilinear parabolic second-order  PDEs, 
 which, however, is well known;
 and as the main reference used in \cite{ma1}, stands the monograph of Ladyzhenskaya et al \cite{lady}. 
 Since the consideration of FBSDEs with jumps leads to PDEs of type \rf{PDE}, i.e., containing
the non-local dependence $\tet_u$, the theory developed in \cite{lady} is not applicable anymore. 

It is rather doubtless that a L\'evy analog of the four step scheme is an 
interesting problem on its own; yet due to the technical complexity of the 
proofs, a direct generalization of the results of \cite{lady} to non-local PDEs  is not obvious.
Indeed, it has been more than twenty years since the original four step scheme was published.

Thus, this article has the following two main contributions. First of all,
we define a class of non-local quasilinear parabolic PDEs containing the PIDE associated to FBSDE \rf{eqFBSDE}
and  establish the existence and uniqueness 
of a classical solution to the Cauchy problem and the initial--boundary value problem
for PDEs of this class; and, secondly, we prove
 the existence and uniqueness theorem for fully coupled FBSDEs with jumps \rf{eqFBSDE}
and provide the formulas that express the solution to FBSDE \rf{eqFBSDE} via the solution to 
associated non-local PDE \rf{PDE} with the coefficients and the function $\tet_u$ given by \rf{coeff}.
The major difficulty of this work appears in obtaining the first of the aforementioned results.

The following scheme is used to obtain the existence and uniqueness result for non-local PDEs.
We start with the initial--boundary value problem on an open bounded domain. The maximum principle, the gradient 
estimate, and the H\"older norm estimate are obtained in order to show the existence of
solution by means of the Leray--Schauder theorem. 
Further, the diagonalization argument is employed to prove the existence of solution to the Cauchy problem. 
Remark that obtaining the gradient estimate is straightforward
and can be obtained from the similar result in \cite{lady} by freezing the non-local dependence $\tet_u$.
However, the estimate
of H\"older norms cannot be obtained in the similar manner because bounds for the time derivatives 
of the coefficients of \rf{PDE} have to be known in advance. 
It is well known, however, that
the H\"older norm estimates are crucial for application of the Leray--Schauder theorem and
the diagonalization argument.



The organization of the article is as follows. Section \ref{s2} is dedicated to the existence and
uniqueness of solution to abstract multidimensional non-local quasilinear parabolic PDEs of form \rf{PDE}.
We consider both the Cauchy problem and the initial--boundary value problem.
In Section \ref{s3}, we show that by means of formulas \rf{coeff}, the PIDE associated to FBSDE \rf{eqFBSDE} is included
into the class of non-local PDEs considered in Section \ref{s2}.
Finally,  in the same section we obtain 
 the existence and uniqueness theorem for FBSDEs with jumps and provide the
formulas connecting the solution to an FBSDE with the solution to the associated PIDE.

\section{Multidimensional non-local  quasilinear parabolic PDEs}
\label{s2}
In this section, we obtain the existence and uniqueness of solution for the initial--boundary value problem and the Cauchy problem
for  an abstract $\RR^m$-valued non-local quasilinear parabolic  PDE  of form  \rf{PDE},
where $\tet_u(t,x)$ is a function built via $u$, taking values in a normed space $E$,
and satisfying additional assumptions to be discussed later. Remark that the function $\tet_u$ considered
in this section is not necessary of the form specified in \rf{coeff}.

Let $\F\sub\Rnu^n$ be an open bounded domain with a piecewise-smooth boundary and non-zero interior angles.
For a more detailed description of the forementioned class of domains  we refer the reader to
\cite{lady} (p. 9). 
Further, in the case of the initial--boundary value problem we consider the boundary condition
\aaa{ 
u(t,x)= \psi(t,x), \quad (t,x) \in \{(0,T) \x \pl \F\} \cup  \big\{\{t=0\} \times \F \big\},
\label{initialsystem}
}
where $\psi$ is the boundary function defined as follows
\aaa{
\psi(t,x) =
\begin{cases} 
\ffi_0(x), \quad x\in \{t=0\} \times \F, \\
0, \quad (t,x) \in  [0,T] \x \pl \F.
\end{cases}
\label{boundaryfunc}
}
In the case of the Cauchy problem, we consider the initial condition
\aaa{
 u(0,x)= \ffi_0(x), \quad x \in \Rnu^n.
\label{cauchysystem}
}
Further, in the case of
the initial--boundary value problem, the coefficients of PDE \rf{PDE} are defined as follows:
$a_{ij}: [0,T] \x \F \x \RR^m \to \Rnu$, $a_i:  [0,T] \x \F \x \Rnu^m \x \Rnu^{m \x n} \x E\to\Rnu$, $i,j=1, \ldots, n$, 
$a:  [0,T] \x \F  \x \Rnu^m \x \Rnu^{m \x n} \x E\to\Rnu^m$.
 In the case of the Cauchy problem,  everywhere in the above definitions,
 $\F$ should be replaced with the entire space $\Rnu^n$.

We remark that due the presence of the function $\tet_u$,  the existence and uniqueness results of Ladyzenskaya et al \cite{lady} 
for initial--boundary value problem \rf{PDE}-\rf{initialsystem} and Cauchy problem
\rf{PDE}-\rf{cauchysystem} are not applicable to the present case.
\begin{rem}
\rm Without loss of generality we assume that $\{a_{ij}\}$ is a symmetric matrix. Indeed, since we are interested in $\C^{1,2}$-solutions
of \rf{PDE}, then for all $i,j$, $\pl^2_{x_ix_j}\! u = \pl^2_{x_j x_i}\! u$. Therefore,  $\{a_{ij}\}$ can be replaced with $\frac12(a_{ij} + a_{ji})$
for non-symmetric matrices. 
\end{rem}

\subsection{Steps involved in solving the problem}
In this subsection, we explain the scheme for obtaining the existence and
uniqueness theorem for non-local PDE \rf{PDE}. In each step, we mention whether it is 
an adaptation of the similar result in \cite{lady}, or the differences are essential.
\biz
\item[1.] {\it Maximum principle.} 
We start with the maximum principle for an
initial--boundary value
problem for PDE \rf{PDE} on a bounded domain. This step can be viewed as an 
adaptation of the similar result in  \cite{lady}.
\item[2.] {\it Gradient estimate.} To obtain an a priori estimate for the gradient of a classical
solution, we freeze the function $\tet_u$ in \rf{PDE}. After this, we are able to apply the result
from \cite{lady} on the gradient estimate.
\item[3.] {\it H\"older norm estimate.} This estimate 
cannot be obtained by simply freezing $\tet_u$, as in the previous step, since
to do so, we need bounds for the time derivatives of the coefficients of \rf{PDE}, where the letter will involve
the time derivative $\pl_t \tet_u$. To overcome this issue, we have to find an a priori bound for 
 $\pl_t u$.
At this step, the difference with \cite{lady} becomes essential since it requires to
estimate an $L_2$-type norm of $\tet_u$, given by \rf{coeff}, via the similar norm of $u$.
\item[4.] {\it Existence and uniqueness theorem for an initial--boundary value problem.}
The main tools in our proof are a priori estimates of H\"older norms obtained in the previous steps and 
the version of the Leray-Schauder theorem from \cite{ls}. 
The proof essentially relies on the scheme outlined in \cite{lady}.
\item[5.] {\it Existence theorem for a Cauchy problem.} 
The main technique here is the diagonalization argument, described in \cite{lady} for the
case of one equation, together with the existence theorem obtained in the previous step.
\item[6.] {\it Uniqueness theorem for a Cauchy problem.} To prove the uniqueness,
we use the results on fundamental solutions from the book \cite{friedman} along with
 Gronwall's inequality and estimates for $\tet_u$.
\ei

\subsection{Notation and terminology}
\lb{s1.1}

In this subsection we introduce the necessary notation that will be used throughout this article.

$T>0$ is a fixed real number, not necessarily small.

 $\F\sub \Rnu^n$ is an open bounded domain with a piecewise-smooth boundary $\pl \F$ and non-zero interior angles.

 
$\F_T = (0,T) \x \F$ and $\F_t=(0,t) \times  \F$, $t\in (0,T)$.
 
 $(\pl \F)_T = [0,T]\x \pl \F$ and  $(\pl \F)_t=[0,t] \x  \pl\F$, $t\in (0,T)$.
 
 $\ovl \F_T = [0,T] \x \ovl \F$, where $\ovl \F$ is the closure of $\F$.
 
  $\Gamma_t = (\{t=0\} \times \F) \cup (\pl \F)_t$,   $t\in [0,T]$.

 

 $(E, \|\cdot \|)$ is a normed space. 
 


 
  For a function  $\phi(t,x,u, p,w): [0,T] \x \ovl \F \x \Rnu^m \x \Rnu^{m \x n} \x E \to \Rnu^k$, where $k=1,2,\ldots$
  
 $\pl_x \phi$ or $\phi_x$ denotes the partial gradient with respect to $x\in\Rnu^n$;
 
 $\pl_{x_i} \phi$ or $\phi_{x_i}$ denotes the partial derivative $\frac{\pl}{\pl x_i} \phi$;

$\pl^2_{x_ix_j} \phi$ or $\phi_{x_i x_j}$ denotes the second partial derivative $\frac{\pl^2}{\pl x_i\pl x_j} \phi$;
 
  $\pl_t \phi$ or $\phi_t$ denotes the partial derivative $\frac{\pl}{\pl t} \phi$;
 
 $\pl_u \phi$ denotes the partial gradient with respect to $u\in\Rnu^m$;
 
 $\pl_{u_i} \phi$ or $\phi_{u_i}$ denotes the partial derivative $\frac{\pl}{\pl u_i}  \phi$; 
 
 $\pl_p \phi$ denotes  the partial gradient  with respect to $p\in \Rnu^{m \x n}$;
 
 $\pl_{p_i} \phi$ or $\phi_{p_i}$ denotes the partial gradient with respect to the $i$th column $p_i$ of 
 the matrix $p\in \Rnu^{m \x n}$;

 $\pl_w \phi$ denotes the partial G\^ateaux derivative of $\phi$ with respect to $w\in E$.

%
%
%
%
%
%
%
%
%
%
%
%
  
  $\hat\mu (s)$, $s\gt 0$, is a positive non-increasing continuous function.
 
 $\mu(s)$ and $\td \mu(s)$,  $s\gt 0$,   are positive non-decreasing continuous functions.
 
$P(s,r,t)$ and $\eps(s,r)$, $s,r,t\gt 0$,
are positive and non-decreasing with respect to each argument, 
whenever the other arguments are fixed.

  $\ffi_0(x)$ is the initial condition.
  
  $m$ is the number of equations in the system.
  
  $M$ is the a priori bound on $\ovl \F_T$ for the solution $u$ to problem \rf{PDE}-\rf{initialsystem} 
  (as defined in Remark \ref{rem-99}).
  
   $M_1$ is the a priori bound for $\pl_x u$ on $\ovl \F_T$.
  
  $\hat M$ is the a priori bound for $\|\tet_u\|_E$ on $\ovl \F_T$.
  
  $\mc K$ is the common bound for the partial derivatives and the H\"older constants, mentioned in Assumption (A8),
over the region $\ovl \F_T \x \{|u| \lt M \} \x \{ \| w\|_E \lt \hat M\} \x \{|p|\lt M_1 \}$, as defined in Remark \ref{rem8}.

 $K_{\xi,\zeta}$ is the constant defined in Assumption (A10).

The H\"older space $\C^{2+\beta}(\ovl \F)$,
 $\beta\in (0,1)$,
is understood as the (Banach) space with the norm
\aaa{
\lb{ho1}
\|\phi\|_{\C^{2 +\beta}(\ovl \F)} = \|\phi\|_{\C^2(\ovl \F)} + [\phi'']_\beta,
\quad \text{where} \quad
[\td \phi]_\beta = \sup_{\substack{x,y\in \ovl \F,\\ 0<|x-y|<1}}\frac{|\td\phi(x)- \td\phi(y)|}{|x-y|^\beta}.
}

For a function $\ffi(x,\xi)$ of more than one variable, the H\"older constant 
with respect to $x$ is defined as 
\aaa{
\lb{ho3}
[\ffi]^x_{\beta} =  \sup_{x,x'\in\ovl \F, \, 0<|x-x'|< 1}  \frac{|\ffi(x,\xi) - \ffi(x',\xi)|}{|x-x'|^\beta},
}
i.e., it is understood as a function of $\xi$.

The parabolic H\"older space $\C^{1+\frac{\beta}2,2+\beta}(\ovl \F_T)$, $\beta\in (0,1)$,
is defined as the Banach space of functions $u(t,x)$ possessing the finite norm
\mmm{
\lb{parabolic-holder-norm}
\|u\|_{\C^{1+\frac{\beta}2,2+\beta}(\ovl \F_T)} =
\|u\|_{\C^{1,2}(\ovl \F_T)} + \sup_{t\in [0,T]}[\pl_t u]_{\beta}^x + \sup_{t\in [0,T]}[\pl^2_{xx} u]_{\beta}^x \\
+ \sup_{x\in \ovl \F}\, [\pl_t u]_{\frac{\beta}2}^t + \sup_{x\in \ovl \F}\,[\pl_x u]_{\frac{1+\beta}2}^t
+ \sup_{x\in \ovl \F}\,[\pl^2_{xx} u]_{\frac{\beta}2}^t.
} 

$\C^{\frac{\beta}2, \beta}(\ovl \F_T)$, $\beta\in (0,1)$, denotes the space of functions $u\in \C(\ovl \F_T)$ 
possessing the finite norm
\aa{
\|u\|_{\C^{\frac{\beta}2,\beta}(\ovl \F_T)} =  \|u\|_{\C(\ovl \F_T)}+
\sup_{t\in [0,T]}[u]_{\beta}^x
+ \sup_{x\in \ovl \F}\, [u]_{\frac{\beta}2}^t.
}

$\C^{0,1}_0(\ovl \F_T)$ and $\C^{1,2}_0(\ovl \F_T)$ denotes the space of functions 
from $\C^{0,1}(\ovl \F_T)$ and $\C^{1,2}(\ovl \F_T)$, respectively, 
vanishing on $\pl \F$.

$\mc H(E,\Rnu^m)$ is the Banach space of bounded positively homogeneous maps $E\to \Rnu^m$ with the norm
$\|\phi\|_{\mc H} = \sup_{\{\|w\|_E \lt 1\}} |\phi(w)|$.

The H\"older space $\C^{2+\beta}_b(\Rnu^n)$,
 $\beta\in (0,1)$,
is understood as the (Banach) space with the norm
\aaa{
\lb{ho2}
\|\phi\|_{\C^{2 +\beta}_b(\Rnu^n)} = \|\phi\|_{\C^2_b(\Rnu^n)} + [\phi'']_\beta,
}
where $\C^2_b(\Rnu^n)$ denotes the space of twice continuously differentiable functions  on $\Rnu^n$ with bounded derivatives
up to the second order. The second term in \rf{ho2} is
the H\"older constant which is defined as in \rf{ho1} but the domain $\F$ has to be replaced with the entire space
$\Rnu^n$. 

Similarly, for a function $\ffi(x,\xi)$, $x\in\Rnu^n$, of more than one variable, the H\"older constant 
with respect to $x$ is defined as in \rf{ho3} but $\F$  should be replaced with $\Rnu^n$.

Further, the parabolic H\"older space $\C_b^{1+\frac{\beta}2,2+\beta}([0,T]\x \Rnu^n)$
is defined as the Banach space of functions $u(t,x)$ possessing the finite norm
\mm{
\|u\|_{\C_b^{1+\frac{\beta}2,2+\beta}([0,T]\x \Rnu^n)} =
\|u\|_{\C_b^{1,2}([0,T]\x \Rnu^n)} + 
\sup_{t\in [0,T]}[\pl_t u]_{\beta}^x + \sup_{t\in [0,T]}[\pl^2_{xx} u]_{\beta}^x \\
+ \sup_{x\in \Rnu^n}[\pl_t u]_{\frac{\beta}2}^t + \sup_{x\in \Rnu^n}[\pl_x u]_{\frac{1+\beta}2}^t
+ \sup_{x\in \Rnu^n}[\pl^2_{xx} u]_{\frac{\beta}2}^t,
} 
where
$\C_b^{1,2}([0,T]\x \Rnu^n)$ denotes the space of bounded continuous functions
whose mixed derivatives up to the second order in $x\in\Rnu^n$ and first 
order in $t\in [0,T]$ are bounded and continuous on $[0,T]\x \Rnu^n$.


We say that a smooth surface $S\sub \Rnu^n$ (or $S\sub [0,T]\x\Rnu^n$) is of class $\C^\gm$ 
(resp. $\C^{\gm_1,\gm_2}$),  where $\gm, \gm_1, \gm_2>1$ are not necessarily integers,
if at some local Cartesian coordinate system of each point $x\in S$, the surface $S$ is represented as a graph
of function of class $C^\gm$ (resp. $\C^{\gm_1,\gm_2}$).
For more details on surfaces of the classes $\C^\gm$ and $\C^{\gm_1,\gm_2}$,
we refer the reader to \cite{lady} (pp. 9--10). 

Furthermore, we say that a piecewise smooth surface $S\sub \Rnu^n$ is of class $\C^\gm$, $\gm>1$,
if  each of its smooth components is of this class.

%
%
%
%

The H\"older norm of a function $u$ on $\Gm_T$ is defined as follows
\aa{
\|u\|_{\C^{1+\frac{\beta}2,2+\beta}(\Gm_T)} = \max\Big\{\|u\|_{\C^{2+\beta}(\ovl\F)}, \|u\|_{\C^{1+\frac{\beta}2,2+\beta}((\pl \F)_T)}\Big\},
}
where the norm $\|u\|_{\C^{1+\frac{\beta}2,2+\beta}((\pl \F)_T)}$ is defined in \cite{lady} (p. 10). However,
since we restrict our consideration only to functions vanishing on the boundary $\pl \F$, we do not need the details 
on the definition of H\"older norms on $(\pl \F)_T$, i.e., in our case it always holds that
\aa{
\|u\|_{\C^{1+\frac{\beta}2,2+\beta}(\Gm_T)} = \|u\|_{\C^{2+\beta}(\ovl \F)}.
}
  
 \begin{rem}
\rm
Some notation of this article is different than in the book of Ladyzhenskaya et al. \cite{lady}. For reader's convenience, we
provide the correspondence of the most important notation:
$\Om = \F, \; S= \pl \F, \; S_T = (\pl \F)_T, \; Q_T = \F_T, \; \Gm_T = \Gm_T, \; N=m$.
\end{rem} 
  
\subsection{Maximum principle}
\label{maximum-principle}
In this subsection, we obtain the maximum principle for problem \rf{PDE}-\rf{initialsystem} under
assumptions (A1)--(A4) below. Obtaining an a priori bound for the solution to problem \rf{PDE}-\rf{initialsystem}
is an essential step for obtaining other a priori bounds, as well as proving the existence of solution.

We agree that the functions $\mu(s)$ and $\hat \mu(s)$ in the assumptions below 
are non-decreasing and, respectively, non-increasing,
continuous,  defined for positive arguments, and
taking positive values. Further, 
$L_E$, $c_1$, $c_2$, $c_3$ are non-negative constants.
Assume the following.
\bi
\item[\bf (A1)] For all $(t,x,u)\in \ovl \F_T \x \RR^m$ and 
$\xi = (\xi_1, \ldots, \xi_n)\in\RR^n$,
\aa{
\hat\mu(|u|) |\xi|^2 \lt \sum_{i,j=1}^n a_{ij}(t,x,{u}) \xi_i \xi_j \lt \mu(|u|) |\xi|^2.
}
\item[\bf (A2)]
The function $\tet_u: \ovl \F_T \to E$ is defined for each $u\in \C^{0,1}_0(\ovl \F_T)$, and
such that $\sup_{\ovl \F_T}\|e^{-\la t }\tet_u(t,x)\|_E \lt L_E \sup_{\ovl \F_T} |e^{-\la t } u(t,x)|$
for all $\la\gt 0$.
\item[\bf (A3)]
There exists a function $\zeta:  \mc R_0 \x \Rnu^n \to [0,\infty)$, 
where $\mc R_0 = \ovl \F_T \x \Rnu^m \x \Rnu^{m \x n} \x E$,
such that for all $(t,x,u,p,w) \in  \mc R_0$,
$\zeta(t,x,u,p,w,0) = 0$  and
\aa{
\big(a(t,x,u,p,w), \, u\big) \gt -c_1 - c_2 |u|^2 - c_3\|w\|_E^2 - \zeta(t,x,u,p,w,p^{_\top} u).
}
 \item[\bf (A4)] The function $\ffi_0: \ovl \F \to \Rnu^m$ is of class $\C^{2+\beta}(\ovl\F)$ with $\beta\in (0,1)$.
\ei
\begin{lem} 
\lb{lem101}
Assume (A1).
If a twice continuously differentiable function $\ffi(x)$ achieves a local maximum at $x_0 \in \F$, then for any $(t,u)\in [0,T]\x \RR^m$,
\aa{
\sum\nolimits_{i,j} a_{ij}(t,x_0,u) \ffi_{x_ix_j}(x_0) \lt 0.
}
\end{lem}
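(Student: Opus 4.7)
The plan is to reduce the statement to a standard linear-algebraic fact about the trace of the product of a positive semi-definite matrix and a negative semi-definite matrix. Set $H := (\phi_{x_i x_j}(x_0))_{i,j=1}^n$ and $A := (a_{ij}(t,x_0,u))_{i,j=1}^n$. The conclusion is just $\operatorname{tr}(AH) \leq 0$.

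First I would record two structural facts. Since $\phi \in C^2$ attains a local maximum at the interior point $x_0 \in \mathcal{F}$, the Hessian matrix $H$ is symmetric and negative semi-definite: this is the standard second-order necessary condition from multivariable calculus, obtained by restricting $\phi$ to each line through $x_0$. On the other hand, by the Remark preceding the lemma we may assume $A$ is symmetric, and the left-hand inequality in (A1) then says
\[
\xi^{\top} A \xi = \sum_{i,j=1}^n a_{ij}(t,x_0,u)\,\xi_i \xi_j \;\geq\; \hat\mu(|u|)\,|\xi|^2,
\]
with $\hat\mu(|u|) > 0$, so $A$ is symmetric positive definite.

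Next I would diagonalize $A$ using the spectral theorem: there exists an orthogonal matrix $U$ and a diagonal matrix $\Lambda = \operatorname{diag}(\lambda_1,\dots,\lambda_n)$ with $\lambda_i > 0$ such that $A = U\Lambda U^{\top}$. Using cyclic invariance of the trace,
\[
\sum_{i,j} a_{ij}(t,x_0,u)\,\phi_{x_i x_j}(x_0) = \operatorname{tr}(AH) = \operatorname{tr}(U\Lambda U^{\top} H) = \operatorname{tr}(\Lambda\, \tilde H),
\]
where $\tilde H := U^{\top} H U$. Since congruence by an orthogonal matrix preserves symmetry and negative semi-definiteness, the diagonal entries $\tilde H_{ii} = e_i^{\top}\tilde H e_i \leq 0$. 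Hence
\[
\operatorname{tr}(\Lambda\,\tilde H) = \sum_{i=1}^n \lambda_i \tilde H_{ii} \leq 0,
\]
which is the desired inequality.

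There is no serious obstacle here — the argument is purely linear-algebraic once the two structural observations (Hessian negative semi-definite at an interior max; $\{a_{ij}\}$ symmetric positive definite by the Remark together with (A1)) are in place. The only point worth flagging is that the symmetrization of $\{a_{ij}\}$ justified in the Remark is genuinely needed, since without it one cannot diagonalize $A$ by an orthogonal transformation; the equality $\sum a_{ij}\phi_{x_ix_j} = \sum \tfrac{1}{2}(a_{ij}+a_{ji})\phi_{x_ix_j}$ that the Remark uses relies on the equality of mixed partials $\phi_{x_ix_j} = \phi_{x_jx_i}$.
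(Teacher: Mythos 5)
Your proof is correct and is essentially the same as the paper's: both diagonalize the matrix $A = \{a_{ij}(t,x_0,u)\}$ by an orthogonal change of basis, reduce to $\sum_k \lambda_k \tilde H_{kk}$, and use positivity of the eigenvalues from (A1) together with non-positivity of the diagonal entries of the transformed Hessian. The only cosmetic difference is that the paper establishes $\tilde H_{kk}\leq 0$ via a one-variable second-derivative-test argument in the rotated coordinates, whereas you invoke the (equivalent) fact that the Hessian at an interior local maximum is negative semi-definite and that this is preserved under orthogonal congruence.
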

\begin{proof}
For each $(t,u) \in [0,T] \x \RR^m,$ we have
\aa{
\sum_{i,j=1}^n a_{ij}(t,x_0,u)  \ffi_{x_i x_j}(x_0)= \sum_{i,j,k,l=1}^n \ffi_{y_k y_l}(x_0) a_{ij}(t,x_0,u)  v_{ik} v_{jl} = \sum_{k=1}^n \la_k \ffi_{y_k y_k}(x_0), 
} 
where $\{v_{ij}\}$ is the matrix whose columns are the  vectors of the orthonormal eigenbasis of $\{a_{ij}(t,x_0,u)\}$,
$(y_1,\cdots,y_n)$ are the coordinates with respect to this eigenbasis, and $(\la_1,\cdots,\la_n)$ are the eigenvalues of $\{a_{ij}(t,x_0,u)\}$.

Note that by  (A1), $\la_k=\sum_{i,j =1}^n  a_{ij} v_{ik} v_{jk} \gt \hat\mu(|u|)  > 0$. Let us show that $\ffi_{y_k y_k}(x_0) \lt 0$.  
Since  $\ffi(y_1, \ldots, y_n)$ has a local  maximum at $x_0,$ then $\ffi_{y_k}(x_0) = 0$ for all $k$. Suppose for an
arbitrary fixed $k$, $\ffi_{y_ky_k}(x_0)>0$. Then,
by the second derivative test, the function $\ffi(y_1, \ldots, y_n)$, considered
as  a function of $y_k$ while the rest of the variables is fixed, would have a local minimum at $x_0$. The latter is not the case. Therefore,
$\ffi_{y_ky_k}(x_0)\lt 0$. The lemma is proved.
\end{proof}
Lemma \ref{amp} below will be useful.
\begin{lem}
\lb{amp}
For a function $\ffi: \F_T \to \Rnu$, 
one of the mutually exclusive conditions 1)--3) necessarily holds:
\bi
\item[1)] $\sup_{\F_T} \ffi(t,x) \lt 0$;
\item[2)] $0 < \sup_{\F_T}  \ffi(t,x) = \sup_{\Gm_T}  \ffi(t,x)$;
\item[3)] $\exists \, (t_0,x_0) \in (0,T] \times \F$ such that $\ffi(t_0,x_0) =  \sup_{\F_T}  \ffi(t,x) > 0$.
\ei
\end{lem}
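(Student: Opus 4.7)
The proof is a clean trichotomy on the value and location of the supremum $M := \sup_{\F_T} \ffi$, so essentially no technique is required. If $M \lt 0$, case (1) holds immediately (recalling the paper's convention that $\lt$ denotes $\leqslant$); otherwise $M>0$ and it remains only to separate cases (2) and (3).

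In the positive case I would invoke the implicit hypothesis that $\ffi$ is continuous on $\ovl \F_T$, which is the natural regularity in the intended application to classical solutions of PDE \rf{PDE} and is in any event needed for $\sup_{\Gm_T} \ffi$ to be meaningful, since $\Gm_T$ is disjoint from $\F_T$. Then $\ovl \F_T = [0,T] \x \ovl \F$ is compact, the supremum is attained at some $(t^*,x^*) \in \ovl \F_T$, and density of $\F_T$ in $\ovl \F_T$ together with continuity yields $\sup_{\F_T} \ffi = \sup_{\ovl \F_T} \ffi = \ffi(t^*,x^*) = M$.

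Now I would split via the disjoint decomposition $\ovl \F_T = \Gm_T \cup \bigl((0,T] \x \F\bigr)$, where $\Gm_T = (\{0\} \x \F) \cup ([0,T] \x \pl \F)$ and the remainder $(0,T] \x \F$ collects precisely the interior spatial points together with the terminal time slice. If $(t^*,x^*) \in (0,T] \x \F$, case (3) holds with $(t_0,x_0) := (t^*,x^*)$; otherwise $(t^*,x^*) \in \Gm_T$ gives $\sup_{\Gm_T} \ffi \geqslant M$, which together with the obvious reverse bound $\sup_{\Gm_T} \ffi \leqslant \sup_{\ovl \F_T} \ffi = M$ yields case (2). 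Mutual exclusivity of case (1) versus cases (2) and (3) is immediate from the sign of $M$. The only real bookkeeping issue, and the main (very minor) obstacle, is the regularity point flagged above: the statement as written defines $\ffi$ only on the open set $\F_T$ yet refers to its values on the disjoint set $\Gm_T$, so the lemma must be read as a statement about functions admitting a continuous extension to $\ovl \F_T$.
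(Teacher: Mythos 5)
The paper's own proof is just the sentence ``The proof is straightforward,'' so there is nothing substantive to compare against. Your argument is correct and is a fair reconstruction of what the authors had in mind: a clean trichotomy based on the sign of $M$ and, in the positive case, the disjoint decomposition $\ovl\F_T = \Gm_T \cup \bigl((0,T]\x\F\bigr)$ at the point where the supremum is attained. You are also right to flag the implicit regularity requirement -- the lemma only makes sense for $\ffi$ admitting a continuous extension to $\ovl\F_T$, and in the sole application (Theorem~\ref{max-princ}) the function in question is $|v|^2$ with $v\in\C^{1,2}(\ovl\F_T)$, so this is satisfied. One further wrinkle that the lemma's wording glosses over and that you quietly (and honestly) sidestep: cases 2) and 3) are not in fact mutually exclusive, since a continuous function can attain its supremum both on $\Gm_T$ and at some $(t_0,x_0)\in(0,T]\x\F$; but only the covering, not the disjointness, is used in the maximum-principle argument, so this does not affect the downstream reasoning.
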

\begin{proof}
The proof is straightforward.
\end{proof}

\begin{thm} [Maximum principle for initial--boundary value problem \rf{PDE}-\rf{initialsystem}]
\lb{max-princ}
Assume (A1)--(A4). 
If $u(t,x)$ is a $\C^{1,2}(\ovl \F_T)$-solution to  problem \rf{PDE}-\rf{initialsystem}, then
\aaa{
\lb{999}
\sup_{\ovl\F_T} |u(t,x)| \lt  e^{\la T} \max \big\{ \sup_{\ovl\F} |\ffi_0(x)|, \, \sqrt{c_1} \big\}, \quad \text{where} \quad \la  =  c_2  +  c_3 L^2_E + 1.
}
\label{maxminsys}
\end{thm}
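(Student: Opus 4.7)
The plan is to prove the maximum principle by applying Lemma \ref{amp} to the function $\varphi(t,x) = e^{-2\lambda t}|u(t,x)|^2 - K^2$, where $K = \max\{\sup_{\overline{\mathcal F}}|\ffi_0|,\sqrt{c_1}\}$ and $\lambda = c_2 + c_3 L_E^2 + 1$. The goal is to show that cases 2) and 3) in Lemma \ref{amp} lead to a contradiction, so that $\varphi \leq 0$ on $\mathcal F_T$, i.e.\ $|u|\leq e^{\lambda T} K$.

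First I dispose of case 2). On $\Gm_T$ the function $u$ equals the boundary function $\psi$ in \rf{boundaryfunc}, which is either zero on $(\pl\F)_T$ or equal to $\ffi_0$ at $t=0$. Hence $\varphi(t,x) = e^{-2\lambda t}|\psi(t,x)|^2 - K^2 \leq |\ffi_0(x)|^2 - K^2 \leq 0$ on $\Gm_T$, which contradicts the assumption $0 < \sup_{\Gm_T}\varphi$ of case 2). In case 1), $\sup_{\mathcal F_T}\varphi \leq 0$ already gives the desired bound.

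For case 3), pick an interior maximum point $(t_0,x_0) \in (0,T]\times \F$ with $\varphi(t_0,x_0) > 0$. At this point we have $\pl_t\varphi \geq 0$, $\pl_{x_i}\varphi=0$, and $\sum a_{ij}\pl^2_{x_ix_j}\varphi \leq 0$ by Lemma \ref{lem101}. Computing the derivatives of $\varphi = e^{-2\lambda t}|u|^2$, the gradient condition gives $(u,\pl_{x_i}u)=0$ for every $i$, so $p^\top u = 0$ where $p = \pl_x u$. Taking the inner product of PDE \rf{PDE} with $u$ and exploiting that $\sum_i a_i (\pl_{x_i}u,u)=0$, we obtain
\aa{
\sum_{i,j} a_{ij}(\pl^2_{x_ix_j}u,u) \;=\; (a,u) + (\pl_t u,u).
}
Using the second-derivative inequality for $\varphi$ together with (A1), the left side is bounded above by $-\hat\mu(|u|)|\pl_x u|^2 \leq 0$. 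Combined with $(u,\pl_t u)\geq \lambda |u|^2$ coming from $\pl_t\varphi\geq 0$, this yields
\aa{
\lambda |u|^2 \;\leq\; -(a,u) \quad \text{at } (t_0,x_0).
}
Assumption (A3), applied with $p^\top u = 0$ so that the $\zeta$-term vanishes, bounds $-(a,u)$ above by $c_1+c_2|u|^2 + c_3\|\tet_u\|_E^2$.

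The crucial step is to bound $\|\tet_u(t_0,x_0)\|_E$ at the maximum point. Since $\varphi$ attains its supremum at $(t_0,x_0)$, we have $|e^{-\lambda t_0}u(t_0,x_0)| = \sup_{\ovl{\mathcal F}_T}|e^{-\lambda t}u(t,x)|$, and (A2) with this $\lambda$ gives $\|\tet_u(t_0,x_0)\|_E \leq L_E |u(t_0,x_0)|$. Substituting:
\aa{
\lambda|u|^2 \;\leq\; c_1 + (c_2 + c_3 L_E^2)|u|^2 \quad\Longrightarrow\quad |u(t_0,x_0)|^2 \leq c_1,
}
by the choice of $\lambda$. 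However, $\varphi(t_0,x_0) > 0$ forces $|u(t_0,x_0)|^2 = e^{2\lambda t_0}(\varphi(t_0,x_0)+K^2) > K^2 \geq c_1$, a contradiction.

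The only real obstacle is the nonlocal term, and it is precisely (A2) — with its exponential weight $e^{-\lambda t}$ chosen to match the weight in $\varphi$ — that allows us to estimate $\|\tet_u\|_E$ at an \emph{interior} maximum point of $|e^{-\lambda t}u|$ without any a priori information on $u$. Once that estimate is in place, the argument reduces to the classical pointwise maximum-principle computation.
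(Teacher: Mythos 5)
Your proof is correct and follows essentially the same path as the paper's: the exponential weight, the case analysis via Lemma~\ref{amp}, Lemma~\ref{lem101} at an interior maximum, the first-order condition killing both the $a_i$-term and the $\zeta$-term, and the crucial observation that an interior maximum of $e^{-\lambda t}|u|$ combined with (A2) gives a pointwise bound on $\|\tet_u\|_E$ there. The only cosmetic differences are that you work directly with $\varphi = e^{-2\lambda t}|u|^2 - K^2$ and dot the original PDE with $u$, whereas the paper first derives the PDE for $v = e^{-\lambda t}u$ and dots it with $v$; these are algebraically equivalent.
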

\begin{proof}
Let $v(t,x)=u(t,x) e^{-\la t}.$ 
Then, $v$ satisfies the equation
\mm{
 - \sum_{i,j=1}^{n} a_{ij} (t,x,{u}){v}_{x_i x_j} +e^{-\la t} a(t,x,u,u_x,\tet_u)+  \sum_{i=1}^{n} a_{i}(t,x,u,u_x, \tet_u)v_{x_i} + \lambda {v} + {v}_t =0.
 }
 Multiplying the above identity scalarly by $v$, 
 and noting that $(v_{x_ix_j},v) = \frac12 \pl^2_{x_i x_j} |v|^2 - (v_{x_i},v_{x_j})$,
 we obtain
\mmm{
 - \frac12 \sum_{i,j=1}^{n}  a_{ij} (t,x,{u})\pl^2_{x_i x_j} |v|^2+ 
  e^{- \lambda t} (a(t,x,u,u_x, \tet_u),v)
 \\ + \sum_{i,j=1}^{n}  a_{ij} (t,x,{u})  (v_{x_i}, {v}_{x_j}) +
 \frac12  \sum_{i=1}^n a_{i}(t,x,u,u_x, \tet_u) \pl_{x_i} |v|^2 + \lambda |v|^2  
 + \frac12 \pl_t |v|^2=0,
  \label{mixmaxsys}
 }
 where $u$ and $v$ are evaluated at $(t,x)$.
If $t=0$, then \rf{999} follows trivially. 
Otherwise, for the function $w=|v|^2$, one of the conditions 1)--3) of Lemma \ref{amp} necessarily holds.
Note that condition 1) is excluded. Furthermore, if 2) holds, then 
\aaa{
\lb{9976}
\sup_{\ovl\F_T}|u(t,x)|  \lt e^{\la T} \sup_{\ovl\F_T}|v(t,x)| \lt e^{\la T}\sup_{\ovl\F} |\ffi_0(x)|.
}
Suppose now that 3) holds, i.e., 
the maximum of $|v|^2$   is achieved at some point 
$(t_0,x_0)\in (0,T] \x \F$.
Then, we have  
\aaa{
\lb{789}
\pl_x w(t_0,x_0)=0 \quad \text{and} \quad  \pl_t w(t_0,x_0) \gt 0.
}
By Lemma \ref{lem101}, the first term in \rf{mixmaxsys} is non-negative at $(t_0,x_0)$.
Further, assumption (A1) and identities \rf{789} imply that the third, fourth, and the last term on the left-hand side of   
\rf{mixmaxsys}, evaluated at $(t_0,x_0)$, are non-negative.
Consequently, substituting $v(t_0,x_0)=u(t_0,x_0) e^{-\la t_0}$, we obtain 
\aaa{
\lb{eq23}
e^{-2\la t_0} \big(a(t_0,x_0, u_x(t_0,x_0), \tet_u(t_0,x_0)), u(t_0,x_0)\big)
+ \la |v(t_0,x_0)|^2 \lt 0.
}
Further, we remark that $v_x(t_0,x_0)^{\top} v(t_0,x_0) = \frac12 w_x(t_0,x_0) = 0$. Therefore, by (A2),
\mmm{
\lb{es2}
0\gt e^{-2\la t_0}\big(a(t_0,x_0,u(t_0,x_0), u_x(t_0,x_0),  \tet_u(t_0,x_0)), u(t_0,x_0)\big) + \la |v(t_0,x_0)|^2 \\
\gt -c_1e^{-2\la t_0} - c_2|v(t_0,x_0)|^2 - c_3  \|e^{-\la t_0} \tet_u(t_0,x_0)\|_E^2 
- \zeta(\ldots,0) 
+ \la |v(t_0,x_0)|^2 \\
\gt  -c_1 -c_2 |v(t_0,x_0)|^2 - c_3 L_E^2 |v(t_0,x_0)|^2 + \la |v(t_0,x_0)|^2. 
}
Picking $\la  = c_2  + c_3 L^2_{E}  + 1$, we obtain that $|v(t_0,x_0)|^2 \lt c_1$. Therefore,
\aa{ 
\sup_{\ovl\F_T}|u(t,x)| \lt \sqrt{c_1} e^{\la T}.
}
The above inequality together with \rf{9976} implies \rf{999}. 
\end{proof}
\begin{cor}
\lb{cor91}
Let assumptions of Theorem \ref{max-princ} hold. 
If, in (A3), $c_1 = 0$, then
\aaa{
\lb{991}
\sup_{\ovl\F_T} |u(t,x)| \lt  e^{\la T} \sup_{\ovl\F} |\ffi_0(x)|  \quad \text{with} \quad \la  = c_2  +  c_3 L^2_E  +1.
}
If $c_1>0$, then
\aaa{
\lb{992}
\sup_{\ovl\F_T} |u(t,x)| \lt  e^{(\la+ c_1) T} \max\{\sup_{\ovl\F} |\ffi_0(x)|, 1\}.  
}
\end{cor}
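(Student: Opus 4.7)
Both bounds come from repeating the proof of Theorem~\ref{max-princ} with a sharper choice of the exponential weight. Recall that, after writing $v=u\,e^{-\la t}$ and testing with $v$, the argument reduces to three possibilities from Lemma~\ref{amp} for $w=|v|^2$; cases 1)--2) yield at once $\sup_{\ovl\F_T}|u|\lt e^{\la T}\sup_{\ovl\F}|\ffi_0|$, so the only work is to handle case 3), where the maximum of $w$ is attained at an interior point $(t_0,x_0)\in(0,T]\x\F$. There the inequality distilled from \rf{mixmaxsys} reads
\aa{
0\gt -c_1e^{-2\la t_0}+(\la-c_2-c_3L_E^2)|v(t_0,x_0)|^2,
}
using (A2), the vanishing of $\zeta$ at $p^{\top}u=0$ (since $v_x^{\top}v=\tfrac12 w_x=0$ at $(t_0,x_0)$), and the fact that $(t_0,x_0)$ is the point realising $\sup|v|$.

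For the first assertion, I would simply take $\la=c_2+c_3L_E^2+1$ as in the theorem. If $c_1=0$, the displayed inequality forces $|v(t_0,x_0)|^2\lt 0$, which is incompatible with case 3) of Lemma~\ref{amp} (where the supremum is strictly positive). Hence only cases 1)--2) can occur, which immediately gives
\aa{
\sup_{\ovl\F_T}|u|\lt e^{\la T}\sup_{\ovl\F}|\ffi_0|,
}
that is, \rf{991}.

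For the second assertion, the idea is to upgrade the weight so as to swallow the constant $c_1$ into the linear term. I would repeat the whole derivation with $\bar\la=\la+c_1=c_2+c_3L_E^2+1+c_1$ in place of $\la$. The same computation at the interior maximum yields
\aa{
(\bar\la-c_2-c_3L_E^2)\,|v(t_0,x_0)|^2\lt c_1 e^{-2\bar\la t_0},
\qquad\text{i.e.}\qquad
(1+c_1)\,|v(t_0,x_0)|^2\lt c_1,
}
so that $|v(t_0,x_0)|^2\lt c_1/(1+c_1)<1$. Consequently in case 3) we get $\sup_{\ovl\F_T}|u|\lt e^{\bar\la T}$, while in cases 1)--2) we get $\sup_{\ovl\F_T}|u|\lt e^{\bar\la T}\sup_{\ovl\F}|\ffi_0|$. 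Taking the maximum of the two gives \rf{992}.

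The only step that requires any care is verifying that none of the structural ingredients of the proof of Theorem~\ref{max-princ} (the cancellation $\zeta(\ldots,0)=0$ thanks to $w_x(t_0,x_0)=0$, and the bound $\|e^{-\bar\la t_0}\tet_u(t_0,x_0)\|_E\lt L_E|v(t_0,x_0)|$ from (A2)) is affected by replacing $\la$ with $\bar\la$; this is immediate since (A2) holds for every $\la\gt 0$. No other obstruction appears, so I expect the proof to be essentially a two-line modification of the argument already given.
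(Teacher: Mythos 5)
Your proof is correct, and it is essentially the argument that the paper's statement of Theorem~\ref{max-princ} is designed to support (the paper gives no explicit proof of Corollary~\ref{cor91}, so the work is indeed to re-run that argument). Two small observations. First, for the case $c_1=0$ you do not need to revisit the case analysis at all: substituting $c_1=0$ directly into the conclusion \rf{999} of Theorem~\ref{max-princ} gives $\max\{\sup_{\ovl\F}|\ffi_0|,\sqrt{0}\}=\sup_{\ovl\F}|\ffi_0|$, which is \rf{991} verbatim; your re-derivation is correct but unnecessary. Second, for $c_1>0$ your choice to re-run the proof with $\bar\la=\la+c_1$ is exactly the right move and is genuinely necessary, since the bound \rf{992} does not follow from the statement of \rf{999} by elementary manipulations alone (e.g.\ when $c_1$ is large and $T$ small, $e^{\la T}\sqrt{c_1}$ can exceed $e^{(\la+c_1)T}$, so one cannot simply compare the two right-hand sides). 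You correctly isolated the one place where the larger weight matters, namely the inequality $(1+c_1)|v(t_0,x_0)|^2\lt c_1$, and correctly noted that (A2) holds for all $\la\gt 0$ so nothing else in the argument changes.
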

\begin{rem}
\lb{rem-99}
\rm
Let $M$ denote the biggest of right-hand sides of \rf{999} and \rf{992}. By Theorem \ref{maxminsys}
and Corollary \ref{cor91}, $M$ is an a priori bound  for $|u(t,x)|$ on $\ovl \F_T$. 
 Everywhere below throughout the text, by $M$ we understand the quantity defined above. 
Furthermore, by (A2), $L_E M$ is a bound for $\|\tet_u(t,x)\|_E$, which we denote by $\hat M$.
\end{rem}
\begin{rem}
\rm
 The function $\zeta$ was added to the right-hand side of the inequality in (A3)  just for
 the sake of generality (it is not present in the similar assumption in \cite{lady}). Indeed, the presence of
 this function does not give any extra work in the proof.
\end{rem}
\subsection{Gradient estimate}
We now formulate assumptions
(A5)--(A7) which, together with previously introduced assumptions (A1)--(A4),
will be necessary for obtaining an a priori bound for the gradient $\pl_x u$ of the solution
$u$ to problem \rf{PDE}-\rf{initialsystem}. 
Obtaining the gradient estimate is crucial for obtaining estimates of H\"older norms,
as well as for the proof of existence. 
Everywhere below, $\mc R$ and $\mc R_1$ are regions defined as follows
\aaa{
\mc R = \ovl \F_T \x \{ |u| \lt M\}  \x \Rnu^{m \x n} \x  \{\|w\|_E \lt \hat M\};
\quad
\mc R_1 = \ovl\F_T \x \{ |u| \lt M\}.
\lb{regions}
}
Further,  the functions $\td \mu(s)$, $\eta(s,r)$, $P(s,r,t)$, and $\eps(s,r)$ in the assumptions below 
are continuous,  defined for positive arguments,
taking positive values, and non-decreasing with respect to each argument, 
whenever the other arguments are fixed.

Assumptions (A5)--(A7) read:
\bi
 \item[\bf (A5)]  For all $(t,x,u,p,w)\in \mc R$ it holds that 
   \aa{
   & |a_i (t,x,u,p,w)| \leq \eta(|u|, \|w\|_E)(1+|p|), \quad i\in \{1,\ldots, n\}, \quad \text{and}\\
   & |a(t,x,u,p, w)| \lt \big(\eps(|u|,  \|w\|_E) + P(|u|, |p|,  \|w\|_E)\big)(1+|p|)^2,
}
where $\lim_{r\to \infty} P(s, r, q) = 0$ and
$2(M+1) \eps(M,\hat M) \lt \hat \mu(M)$.
\item[ \bf (A6)] $a_{ij}$, $a$ and $a_i$ are continuous on $\mc R$;
 $\pl_x a_{ij}$ and $\pl_u a_{ij}$ exist and are continuous on $\mc R_1$;
moreover, 
$\max \big\{\big| \pl_x a_{ij}(t,x,u)\big| , \big|\pl_u a_{ij}(t,x,u)\big| \big\}\lt \td \mu(|u|)$.
\item[\bf (A7)] The boundary $\pl \F$ is of class $\C^{2+\beta}$.
\ei
In Theorem \ref{grad-est-th} below, we obtain the gradient estimate 
for a $\C^{1,2}(\ovl \F_T)$-solution $u(t,x)$ of problem \rf{PDE}-\rf{initialsystem}.
The main idea is to freeze $\tet_u$ in the coefficients $a_i$ and $a$ and 
apply the result of \cite{lady} on the gradient estimate of a classical solution to
a system of quasilinear parabolic PDEs. 
\begin{thm}(Gradient estimate)
\lb{grad-est-th}
 Let (A1)--(A7) hold, and let
$u(t,x)$  be a $\C^{1,2}(\ovl \F_T)$-solution to problem  \rf{PDE}--\rf{initialsystem}.
Further  let $M$ be the a priori bound for $|u(t,x)|$ on $\ovl \F_T$ whose existence 
was established by Theorem \ref{maxminsys}.
Then, there exists a constant $M_1>0$, depending only 
on $M$, $\hat M$, $\sup_{\ovl\F} |\pl_x \ffi_0|$,
$\mu(M)$,  $\hat\mu(M)$, $\td\mu(M)$, $\eta(M,\hat M)$, $\sup_{q\gt 0} P(M,q,\hat M)$, and $\eps(M,\hat M)$
such that
 \aaa{
 \lb{grad-est}
\sup_{\ovl \F_T} |\pl_x u| \lt M_1.
}
\end{thm}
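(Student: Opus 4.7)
The plan is to reduce the non-local problem to a standard local quasilinear problem by ``freezing'' the functional argument $\theta_u$ and then invoking the gradient estimate of Ladyzhenskaya et al. \cite{lady}. Concretely, given a fixed $\C^{1,2}(\ovl \F_T)$-solution $u(t,x)$, I would set $\omega(t,x) := \tet_u(t,x) \in E$ and define the new (local) coefficient functions
\[
\bar a_i(t,x,u,p) := a_i(t,x,u,p,\omega(t,x)), \qquad \bar a(t,x,u,p) := a(t,x,u,p,\omega(t,x)).
\]
Then $u$ solves the traditional quasilinear parabolic system
\[
-\sum_{i,j=1}^n a_{ij}(t,x,u)\pl^2_{x_ix_j} u + \sum_{i=1}^n \bar a_i(t,x,u,\pl_x u)\pl_{x_i} u + \bar a(t,x,u,\pl_x u) + \pl_t u = 0
\]
with the same boundary/initial data \rf{initialsystem}, and no residual non-local dependence remains.

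Next I would verify that the frozen system falls within the scope of the gradient-estimate theorem of \cite{lady}. By Theorem \ref{max-princ} we have $|u(t,x)|\lt M$ on $\ovl\F_T$, and by Remark \ref{rem-99} we have $\|\omega(t,x)\|_E \lt \hat M = L_E M$ on $\ovl\F_T$. Hence on the slab $\ovl\F_T\x\{|u|\lt M\}\x \Rnu^{m\x n}$, assumption (A5) applied with $w=\omega(t,x)$ yields
\[
|\bar a_i(t,x,u,p)|\lt \eta(M,\hat M)(1+|p|), \qquad
|\bar a(t,x,u,p)|\lt \bigl(\eps(M,\hat M) + \bar P(|p|)\bigr)(1+|p|)^2,
\]
where $\bar P(r) := P(M,r,\hat M)$ satisfies $\lim_{r\to\infty}\bar P(r) = 0$ and the smallness condition $2(M+1)\eps(M,\hat M)\lt \hat\mu(M)$ from (A5) is automatically inherited. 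The uniform parabolicity (A1), the continuity and bounds on $\pl_x a_{ij}$, $\pl_u a_{ij}$ from (A6) (which do not involve $\tet_u$ at all), the boundary regularity (A7), and the regularity $\ffi_0\in\C^{2+\beta}(\ovl\F)$ from (A4) carry over without modification.

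Having matched every structural hypothesis, I would invoke the gradient estimate from \cite{lady} for traditional quasilinear parabolic systems to extract a constant $M_1>0$ bounding $\sup_{\ovl\F_T}|\pl_x u|$, with $M_1$ depending only on the quantities listed in the statement. The dependence list transfers verbatim because the only way $\omega$ enters the frozen structure conditions is through its sup-norm $\hat M$ and through the arguments of $\eta$, $\eps$, $P$, all of which are monotone and evaluated at $(M,\hat M)$.

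The genuine obstacle I anticipate is not conceptual but bookkeeping: one must make sure that the \cite{lady} gradient estimate requires of $\bar a_i$ and $\bar a$ only the structural inequalities listed above (plus continuity in $(t,x,u,p)$), and \emph{not} any additional regularity in $(t,x)$ that would be jeopardized by the fact that $\omega(t,x)=\tet_u(t,x)$ inherits only the modest smoothness of $u$. Assuming the \cite{lady} estimate is of the ``structure-only'' type, as stated in the formulation used by the paper, the argument is then complete; this is exactly the point at which the non-local nature of \rf{PDE} is harmless, and this is also why the authors emphasize (just before the theorem) that freezing $\tet_u$ suffices for the gradient bound, while the analogous strategy for the H\"older estimate later on will not.
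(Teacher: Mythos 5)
Your proposal follows essentially the same route as the paper: freeze $\tet_u$ to obtain a traditional quasilinear system with coefficients $\bar a_i(t,x,u,p) = a_i(t,x,u,p,\tet_u(t,x))$, $\bar a(t,x,u,p)=a(t,x,u,p,\tet_u(t,x))$, verify that (A1), (A5), (A6) transfer with $\|\tet_u\|_E\lt\hat M$, and then apply the gradient estimate of \cite{lady} (Theorem~6.1, p.~592) to conclude. The one item the paper spells out that you pass over lightly is the verification that the frozen problem itself satisfies a maximum principle bound $\lt M$: the paper records that (A3), with $w=\tet_u(t,x)$ substituted, gives the sign condition needed in \cite{lady} with an adjusted constant $c_1'=c_1+c_3\hat M^2$, and then invokes Corollary~\ref{cor91} to get the sup bound for the frozen system; your concern about whether the \cite{lady} estimate is ``structure-only'' in $(t,x)$ is in fact handled by the continuity assumption (A6), which makes $(t,x,v,p)\mapsto a(t,x,v,p,\tet_u(t,x))$ and $(t,x,v,p)\mapsto a_i(t,x,v,p,\tet_u(t,x))$ continuous on the relevant region.
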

\begin{proof}
In \rf{PDE}, we freeze $\tet_u$  in the coefficients $a_i$ and $a$.
 Non-local PDE \rf{PDE} is, therefore, reduced  to the following quasilinear parabolic PDE with respect to $v$
\mmm{
\lb{PDE-lady}
- \sum_{i,j=1}^{n} a_{ij} (t,x,v)\pl^2_{x_i x_j}v + \sum_{i=1}^n a_i (t,x,v, \pl_{x} v, \tet_u(t,x))\pl_{x_i} v \\ 
+ a(t,x,v, \pl_x v, \tet_u(t,x))  +  \pl_t v=0
}
with initial--boundary condition \rf{initialsystem}.
Since $\hat M$ is an a priori bound for $\|\tet_u(t,x)\|_E$ (see Remark \ref{rem-99}), we are in the assumptions of Theorem 6.1 from  \cite{lady} (p. 592) on the gradient estimate
for solutions of PDEs of form \rf{PDE-lady}. Indeed, assumptions (A1) and (A5) are the same as in Theorem 6.1,
and  (A6) immediately implies the continuity of functions $(t,x,v,p) \to a(t,x,v, p, \tet_u(t,x))$ and $(t,x,v,p) \to a_i(t,x,v, p, \tet_u(t,x))$ 
in the region $\ovl \F_T \x \{ |v| \lt M\} \x \Rnu^{m \x n}$.
Further, (A5) implies conditions (6.3) on p. 588 and inequality (6.7) on p. 590 of \cite{lady}.
It remains to note that by (A3),
\aa{
\big(a(t,x,v, p, \tet_u(t,x)), v)  \gt -c_1' - c_2|v|^2 + \zeta(t,x,v,p,\tet_u(t,x), p^\top v),
}
where $c_1'=c_1 + c_3 \hat M^2$.
Therefore,
by Corollary \ref{cor91}, any solution $v(t,x)$ of \rf{PDE-lady} satisfies the estimate
$\sup_{\ovl \F_T} |v(t,x)| \lt  e^{(c_2+1+c_1') T} \max \big\{ \sup_{\ovl\F} |\ffi_0(x)|, 
1 \big\}\lt M$.

Since $v(t,x) = u(t,x)$ is a $\C^{1,2}(\ovl \F_T)$-solution to \rf{PDE-lady}, then by Theorem 6.1 of \cite{lady}, 
estimate \rf{grad-est} holds true. By the same theorem, the constant
$M_1$ only depends on $M$,  $\sup_{\ovl \F} |\pl_x \ffi_0|$, 
$\mu(M)$,  $\hat\mu(M)$, $\td\mu(M)$, $\eta(M,\hat M)$, $\sup_{q\gt 0} P(M,q,\hat M)$, and $\eps(M,\hat M)$.
\end{proof}
\subsection{Estimate of $\pl_t u$}
\lb{ut-est}
Now we complete the set of assumptions (A1)--(A7) with assumptions (A8)--(A10) below. 
All together, these assumptions
are necessary to obtain an a priori bound for the time derivative $\pl_t u$ which is crucial for proving that
any $\C^{1,2}(\ovl \F_T)$-solution  to problem \rf{PDE}--\rf{initialsystem}
belongs to class $\C^{1+\frac{\beta}2,1+\beta}(\ovl \F_T)$ and obtaining a
 bound for the $\C^{1+\frac{\beta}2,1+\beta}(\ovl \F_T)$-norm of this solution.
 The region  $\mc R_1$ is defined, as before,  by \rf{regions},
and the region $\mc R_2$ is defined as follows
\aaa{
\lb{r2}
\mc R_2 = \ovl \F_T \x \{|u| \lt M \}  \x \{|p|\lt M_1 \} \x \{\|w\|_E \lt \hat M\}.
}
Assumptions (A8)--(A10) read:
\bi
\item[\bf (A8)] $\pl_t a_{ij}$, $\pl^2_{uu} a_{ij}$, $\pl^2_{ux} a_{ij}$, $\pl^2_{xt} a_{ij}$, 
$\pl^2_{ut} a_{ij}$ exist and are continuous on $\mc R_1$; 
$\pl_t a$, $\pl_u a$, $\pl_p a$, $\pl_w a$, $\pl_t a_i$, $\pl_u a_i$, $\pl_p a_i$, $\pl_w a_i$ exist and
are continuous and bounded on $\mc R_2$; $a$ and $a_i$ are  
$\beta$-H\"older continuous in $x$, $\beta\in (0,1)$, and locally Lipschitz in $w$ with the H\"older and Lipschitz constants 
 bounded over $\mc R_2$.
\item[\bf (A9)] For each $u\in \C^{1,2}_0(\ovl \F_T)$, $\pl_t \tet_u$ and $\pl_x \tet_u$ exist and
are continuous and bounded; 
moreover, the bounds for $\|\pl_t \tet_u\|$ and $\|\pl_x \tet_u\|$ only depend
on the bounds for $|\pl_t u(t,x)|$ and $|\pl_x u(t,x)|$ in $\ovl\F_T$.
\item[\bf (A10)]  For all $u\in \C^{1,2}_0(\ovl \F_T)$, $(t,x)\in \ovl\F_T$, it holds that 
\aaa{
 \hspace{1.2cm}   
 \frac{\tet_u(t+\Dl t,x) - \tet_u(t,x)}{\Dl t} = \hat \tet_v(t,x) + \zeta_{u,u_x}(t,x) v(t,x) + 
\xi_{u,u_x}(t,x),
\lb{repa10}
}
where $v(t,x)=(\Dl t)^{-1} \big(u(t+\Dl t,x) - u(t,x)\big)$, $\zeta_{u,u_x}$, $\xi_{u,u_x}$ are bounded functions with values in  
$\mc L(\Rnu^m,E)$ and $E$, respectively,  depending non-locally on $u$ and $u_x$ (their common
bound will be denoted by $K_{\xi\zeta}$),
and $\hat \tet_v: \ovl \F_T \to E$, defined for each $v\in \C^{1,2}_0(\ovl \F_T)$, is such that
for all $\al>0$ and $\tau\in (0,T)$,
\aaa{
\lb{i-w}
 \int_{\F^\al_\tau(|v|^2)} \|\hat \tet_v(t,x)\|^4_E \, dt\, dx \lt \hat L_E \Big(\int_{\F^\al_\tau(|v|^2)} |v(t,x)|^4 dt dx + \al^2 \la(\F^\al_\tau)\Big),
}
where $\hat L_E>0$ is a constant depending on $\|u\|_{\C^{0,1}(\ovl\F_T)}$, $\F^\al_\tau(|v|^2) = \{(t,x)\in\F_\tau: |v(t,x)|^2>\al\}$, and $\la$ is the Lebesgue measure on $\Rnu^{n+1}$.
  \ei
  \begin{rem}
\rm
\lb{rem8}
The common bound over $\mc R_2$ for the partial derivatives and the H\"older constants
mentioned in assumption (A8) and related to 
the functions $a$ and $a_i$ will be denoted by $\mc K$. 
\end{rem}
\begin{rem}
\rm
According to the results of \cite{shapiro} (p. 484), for locally Lipschitz mappings in normed spaces,
the G\^ateaux and Hadamard directional differentiabilities are equivalent. Moreover, 
the local Lipschitz constant of a function is the same as the global Lipschitz constant 
of its G\^ateaux derivative. Thus, under (A8), the chain rule holds
for the G\^ateaux derivatives $\pl_w a$ and $\pl_w a_i$ which, 
moreover, are globally Lipschitz and positively homogeneous. 
\end{rem}
The following 
maximum principle for non-local linear-like parabolic PDEs written in the divergence form, is crucial 
for obtaining the a priori bound for $\pl_t u$.

Consider the following system of non-local PDEs in the divergence form
 \mmm{
\pl_t u - \sum_{i=1}^n \pl_{x_i}  \Big[\sum_{j=1}^n \hat a_{ij}(t,x) \pl_{x_j} u + A_i(t,x)u + f_i(t,x)\Big]
+ \sum_{i=1}^n B_i(t,x) \pl_{x_i} u \\
 + A(t,x)u +C(t,x)\big(\hat \tet_u(t,x) \big)+ f(t,x) = 0, 
\qquad 
 u(0) = u_0,
 \label{linearsys}
}
where $\hat a_{ij}: \ovl \F_T \to \Rnu$, $A_i:  \ovl \F_T \to \Rnu^{m \x m}$,  $B_i: \ovl \F_T \to \Rnu^{m \x m}$,  
$f_i:  \ovl \F_T  \to \Rnu^m,$ $i,j=1,\ldots, n$,  $A:  \ovl \F_T  \to \RR^{m \x m}$,     $f: \ovl \F_T  \to \Rnu^m$,
and $C: \ovl \F_T \to \mc H(E,\Rnu^m)$,
where $\mc H(E,\Rnu^m)$ is the Banach space of bounded positively homogeneous maps $E\to \Rnu^m$ with the norm
$\|\phi\|_{\mc H} = \sup_{\{\|w\|_E \lt 1\}} |\phi(w)|$.
In \rf{linearsys}, the function $u$ together with its partial derivatives, as usual, is evaluated at $(t,x)$ and $\hat \tet_u(t,x)$
is an $E$-valued function built via $u$ and satisfying inequality \rf{i-w}. Remark that all terms in
\rf{linearsys}, except the term containing $\hat \tet_u(t,x)$, are linear in $u$.

The lemma below, which is a version of the integration-by-parts formula, can be found in \cite{lady} (p. 60).
\begin{lem}
\lb{lem25}
Let $f$ and $g$ be real-valued functions from the Sobolev spaces $W^{1,p}(\mathbb G)$ and $W^{1,q}(\mathbb G)$ 
{\rm (}$\frac1p + \frac1q  \lt 1+ \frac1n${\rm )}, respectively,
where $\mathbb G\sub \Rnu^n$ is a bounded domain. Assume that the boundary $\pl \mathbb G$ is piecewise smooth and
that  $fg = 0$ on $\pl \mathbb G$. Then, 
\aa{
\int_{\mathbb G} f \, \pl_{x_i} g \, dx = - \int_{\mathbb G} g \, \pl_{x_i} f \, dx.
}
\end{lem}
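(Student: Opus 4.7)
The plan is to reduce this to the classical divergence theorem by showing that the product $fg$ belongs to $W^{1,1}(\mathbb{G})$, has zero trace, and satisfies the Leibniz rule. First I would verify that the integrability condition $\frac{1}{p}+\frac{1}{q}\leq 1+\frac{1}{n}$ is exactly what is needed for each of the integrals $\int f\,\partial_{x_i}g\,dx$, $\int g\,\partial_{x_i}f\,dx$, and (after establishing the Leibniz rule) $\int \partial_{x_i}(fg)\,dx$ to make sense. Indeed, by the Sobolev embedding $W^{1,p}(\mathbb{G})\hookrightarrow L^{p^*}(\mathbb{G})$ with $\tfrac{1}{p^*}=\tfrac{1}{p}-\tfrac{1}{n}$ (and analogous embeddings when $p\ge n$, with suitable modifications), the hypothesis translates exactly into $\tfrac{1}{p^*}+\tfrac{1}{q}\le 1$, so $f\,\partial_{x_i}g\in L^1(\mathbb{G})$ by H\"older's inequality; symmetrically $g\,\partial_{x_i}f\in L^1(\mathbb{G})$.

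Next I would establish the Leibniz rule $\partial_{x_i}(fg)=f\,\partial_{x_i}g+g\,\partial_{x_i}f$ in the distributional sense on $\mathbb{G}$. The standard way is to mollify: let $f_\varepsilon=f*\rho_\varepsilon$ and $g_\varepsilon=g*\rho_\varepsilon$ on compact subsets of $\mathbb{G}$, where $\rho_\varepsilon$ is a standard mollifier. Then $f_\varepsilon\to f$ in $W^{1,p}_{\mathrm{loc}}$ and $g_\varepsilon\to g$ in $W^{1,q}_{\mathrm{loc}}$, the pointwise Leibniz rule holds for the smooth approximations, and passing to the limit using the H\"older estimates from the previous step gives the distributional identity. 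Combined with the $L^1$-integrability of both summands, this shows $fg\in W^{1,1}(\mathbb{G})$.

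Now I would argue that $fg$ has zero trace on $\partial\mathbb{G}$. Since $\partial\mathbb{G}$ is piecewise smooth, the trace operator $W^{1,1}(\mathbb{G})\to L^1(\partial\mathbb{G})$ is well defined and continuous. The trace of $fg$ is $\mathrm{tr}(f)\,\mathrm{tr}(g)$ (again obtained by mollification and passage to the limit in a neighborhood of the boundary), so the hypothesis $fg=0$ on $\partial\mathbb{G}$ is precisely the statement $\mathrm{tr}(fg)=0$ in $L^1(\partial\mathbb{G})$. Applying the divergence theorem for $W^{1,1}$ functions on bounded Lipschitz (in particular piecewise smooth) domains to the vector field $(fg)e_i$ yields
\[
\int_{\mathbb{G}}\partial_{x_i}(fg)\,dx=\int_{\partial\mathbb{G}}(fg)\,\nu_i\,d\sigma=0,
\]
and inserting the Leibniz rule gives the claim.

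The main obstacle is really the Leibniz rule at the given critical integrability exponent: for $p,q$ close to $1$ or close to $n$, one must be careful with the Sobolev embedding (using $L^r$ for arbitrarily large $r$ when $p=n$, or the embedding into H\"older spaces when $p>n$) to justify the passage to the limit $f_\varepsilon g_\varepsilon\to fg$ in $L^1$ with derivatives converging in $L^1$. Once the product $fg\in W^{1,1}(\mathbb{G})$ is properly constructed, the rest follows from standard trace theory and the divergence theorem.
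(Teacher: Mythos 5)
The paper itself gives no proof of this lemma: it is quoted with a citation to Ladyzhenskaya, Solonnikov, and Ural'tseva, p.~60, so there is no internal argument to compare yours against. Evaluated on its own terms, your sketch is correct and hits all the load-bearing points: the Sobolev--H\"older exponent count that puts $f\,\pl_{x_i}g$ and $g\,\pl_{x_i}f$ in $L^1(\mathbb G)$, the Leibniz rule at the critical exponent via approximation, membership of $fg$ in $W^{1,1}(\mathbb G)$, multiplicativity of the trace, and the divergence theorem for $W^{1,1}$ on a Lipschitz domain.

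Two places deserve one more line of rigor if this were written out. First, interior mollification $f*\rho_\eps$ only gives $W^{1,p}_{\mathrm{loc}}$-convergence, which is enough for the Leibniz rule as a distributional identity but not by itself for the trace step; one should instead mollify after applying a bounded extension operator $W^{1,p}(\mathbb G)\to W^{1,p}(\Rnu^n)$ (available on the piecewise-smooth domains considered here), so that $f_\eps\to f$ in $W^{1,p}(\mathbb G)$ globally and one may pass to the limit in the traces. Second, the assertion $\mathrm{tr}(fg)=\mathrm{tr}(f)\,\mathrm{tr}(g)$ in $L^1(\pl\mathbb G)$ needs the product of traces to actually lie in $L^1(\pl\mathbb G)$; this holds under precisely the stated condition $\tfrac1p+\tfrac1q\lt 1+\tfrac1n$, via the boundary trace embedding $W^{1,p}(\mathbb G)\to L^{(n-1)p/(n-p)}(\pl\mathbb G)$ for $p<n$ (with the usual modifications for $p\gt n$), since
$\tfrac{n-p}{(n-1)p}+\tfrac{n-q}{(n-1)q}\lt 1 \iff \tfrac1p+\tfrac1q\lt 1+\tfrac1n$.
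Neither point is a gap in the idea --- you gesture at both --- but the argument as written leans on them without saying why they go through, and it is a pleasant check that the same exponent threshold governs both the interior H\"older estimates and the boundary ones.
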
 
Further, for each $\tau,\tau' \in [0,T]$, $\tau<\tau'$, we define the squared norm
\aaa{
\lb{norm}
\|v\|^2_{\tau,\tau'}= \sup_{t \in [\tau,\tau']} \|v^2(t,\fdot)\|_{L_2(\F)}^2 + \|\pl_x v\|^2_{L_2(\F_{\tau,\tau'})},
}
where $\F_{\tau,\tau'} =  \F \x [\tau,\tau']$.
Furthermore, for an arbitrary real-valued function $\phi$ on  $\ovl\F_T$ and a number $\al>0$, we define $\phi^\al = (\phi-\al)^+$
and $\F^\al_\tau(\phi) = \{(t,x)\in \F_\tau: \phi>\al\}$, where $\tau\in (0,T]$.
The following result was obtained in \cite{lady} (Theorem 6.1, p. 102). It will be 
used in Lemma \ref{lem99h}.
\begin{pro}
\lb{prop16} 
Let $\phi(t,x)$ be  a real-valued function of class  $\C(\ovl\F_\tau)$ such that
$\sup_{(\pl \F)_\tau}\phi \lt  \hat \al,$ where $\hat \al \gt 0$. Assume  for all $\al \gt \hat \al$ and 
for a positive constant $\gm$, it holds that 
$\|\phi^\al\|_{0,\tau} \lt \gm \al \,\sqrt{\la_{n+1}(\F^\al_\tau(\phi))}$,
where $\la_{n+1}$ is the Lebesgue measure on $\Rnu^{n+1}$. 
Then, there exists a constant $\dl>0$, depending only on $n$, such that
 \aa{
 \sup_{\ovl\F_\tau} \phi(t,x) \lt 2\,\hat \al\, \big(1+ \dl\, \gm^2 \, \tau \, \la_n(\F) \big).
 }
\end{pro}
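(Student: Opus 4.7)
The plan is to apply a De Giorgi--Moser type level-set iteration, tuned so that the limiting level of the iteration reproduces the explicit bound in the conclusion. Introduce the increasing sequence of truncation levels $\al_k = \hat\al + K(1 - 2^{-k})$, $k\gt 0$, where the parameter $K>0$ will be fixed at the end, and write $A_k = \F^{\al_k}_\tau(\phi)$ and $\mu_k = \la_{n+1}(A_k)$. Since $\sup_{(\pl\F)_\tau}\phi \lt \hat\al \lt \al_k$, each truncation $\phi^{\al_k} = (\phi-\al_k)^+$ vanishes on the lateral part of the parabolic boundary, which is exactly what is needed to apply the Sobolev embedding in the next step.

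The first key ingredient is a parabolic Gagliardo--Nirenberg--Sobolev inequality
\[
\|\phi^{\al}\|_{L_{2+4/n}(\F_\tau)} \lt C_n\,\|\phi^{\al}\|_{0,\tau},
\]
valid for functions vanishing on $\pl\F$, obtained by interpolating between $L_\infty(0,\tau;L_2(\F))$ and $L_2(0,\tau;L_{2n/(n-2)}(\F))$. Combined with the hypothesis $\|\phi^{\al}\|_{0,\tau} \lt \gm\al\sqrt{\la_{n+1}(\F^\al_\tau(\phi))}$ and the Chebyshev-type estimate $(\al_{k+1}-\al_k)^{2+4/n}\mu_{k+1} \lt \int_{A_k}(\phi^{\al_k})^{2+4/n}$ (which follows from $\phi^{\al_k} \gt \al_{k+1}-\al_k$ on $A_{k+1}$), this yields the nonlinear recursion
\[
\mu_{k+1} \lt \Big(\frac{C_n\gm(\hat\al+K)}{K}\Big)^{2+4/n} 2^{(k+1)(2+4/n)}\,\mu_k^{1+2/n}.
\]

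I would then invoke the classical De Giorgi fast-geometric-convergence lemma: whenever $y_{k+1} \lt A\,b^k y_k^{1+\varepsilon}$ with $\varepsilon>0$ and $y_0 \lt A^{-1/\varepsilon} b^{-1/\varepsilon^2}$, the sequence $y_k$ tends to zero. Applied with $\varepsilon=2/n$, the smallness condition reduces to $\mu_0 \lt c_n\gm^{-(n+2)} K^{n+2}/(\hat\al+K)^{n+2}$ for a constant $c_n>0$; since $\mu_0 \lt \tau\la_n(\F)$, a sufficient choice is $K \gt \hat\al$ (so that $(\hat\al+K)/K \lt 2$) together with $K = \delta\,\hat\al\,\gm^n\,\tau\la_n(\F)$ for a sufficiently large $\delta=\delta(n)$. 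The iteration then forces $\mu_k \to 0$, giving $\phi \lt \hat\al + K$ a.e. on $\F_\tau$ and, by continuity, everywhere on $\ovl\F_\tau$, which matches the target bound $\sup_{\ovl\F_\tau}\phi \lt 2\hat\al(1 + \delta\gm^n\tau\la_n(\F))$.

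The hard part is the bookkeeping required to make the exponent on $\gm$ come out as $n$ (the naive iteration sketched above produces $\gm^{n+2}$). I expect this to be resolved by exploiting the measure factor $\sqrt{\la_{n+1}(\F^\al_\tau(\phi))}$ already present in the hypothesis, either through a sharper interpolation trading part of the $L_{2+4/n}$-norm for a direct estimate on $\mu_k$, or by reparametrizing the sequence $\al_k$ so that the factor $\al_k/(\al_{k+1}-\al_k)$ grows slower than $2^k$. Once this sharpening is installed, the iteration runs through in exactly the form indicated above, and the constant $\delta=\delta(n)$ claimed in the statement follows from the resulting threshold condition.
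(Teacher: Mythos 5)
The paper itself does not prove Proposition~\ref{prop16}; it states explicitly, just before the proposition, that ``the following result was obtained in \cite{lady} (Theorem 6.1, p.\ 102),'' and then fixes the exponents $r$, $q$, $\cp$ that appear in Ladyzhenskaya's general theorem. So there is no in-paper proof to compare against, and your attempt should be judged as a stand-alone argument. The overall strategy --- level-set truncation plus a parabolic Sobolev embedding and an iteration --- is the correct family of ideas and is indeed what Ladyzhenskaya does, but as written the argument does not close, and the gap is not merely the bookkeeping you flag.

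The decisive problem is the smallness condition in the De Giorgi fast-geometric-convergence lemma. You derive (correctly) that the lemma requires
\[
\mu_0 \;\lt\; c_n\,\gm^{-(n+2)}\Big(\tfrac{K}{\hat\al+K}\Big)^{n+2},
\]
and you then propose to satisfy it by taking $K$ large. But the factor $\big(K/(\hat\al+K)\big)^{n+2}$ is bounded above by $1$ for every $K$, so increasing $K$ can never loosen the requirement beyond $\mu_0 \lt c_n\gm^{-(n+2)}$. This is an absolute smallness constraint on $\gm^{n+2}\mu_0 \lt \gm^{n+2}\tau\la_n(\F)$, which the hypotheses of the proposition do not supply; the conclusion of the proposition, by contrast, is unconditional (it simply gives a worse bound when $\gm^n\tau\la_n(\F)$ is large). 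Enlarging $K$ raises the \emph{target} level $\hat\al+K$, but it does not make the iteration converge. So the proposed choice $K=\delta\hat\al\gm^n\tau\la_n(\F)$ does not do what you want it to do, and without a replacement mechanism the iteration simply stalls whenever $\mu_0$ is not small compared with $\gm^{-(n+2)}$.

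Separately, you observe that your setup produces $\gm^{n+2}$ where the statement has $\gm^{n}$, and you describe the intended fix only in the conditional mood (``I expect this to be resolved\ldots''). That is an acknowledged hole, not a proof step, and it is not a small one: the exponent on $\gm$ is tied to the same coupling between the factor $\al$ on the right-hand side of the hypothesis and the level increments $\al_{k+1}-\al_k$, which is precisely what also drives the smallness obstruction above. In Ladyzhenskaya's proof, the way the factor $\al$ is used is exactly what prevents the argument from degenerating into a conditional smallness statement, so resolving this is the heart of the matter, not a peripheral sharpening. Until you supply a concrete mechanism --- for instance a reparametrization of the truncation levels in which the ratio $\al_k/(\al_{k+1}-\al_k)$ stays uniformly bounded so that the resulting Stampacchia-type inequality has a $K$-free constant --- the proof as written does not establish the proposition.
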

\begin{rem}
\rm
We attributed the values $1+\cp = r=q = 4$ for the space dimensions $n=1,2$
 and $1+\cp = r=q = \frac{2n}{n-1}$ for $n\gt 3$ to the constants
  $r$, $q$, and $\cp$ appearing
in the original version of Theorem 6.1 in \cite{lady} (p. 102),
 since for our application we do not need Theorem 6.1 in the most general form.
Also, we remark that by our choice of the parameters, $1+\frac1\cp < 2$ for all space dimensions $n$.
\end{rem}
\begin{lem}
\lb{lem99h}
Assume the coefficients $\hat a_{ij}$, $A_i$, $B_i$, $f_i$, $f$, $A$, and $C$ are of class $\C(\ovl\F_T)$
 and that $\sum_{i,j=1}^n \hat a_{ij}(t,x)\xi_i\xi_j \gt \ro \|\xi\|^2$ for all $(t,x)\in\ovl\F_T$, $\xi\in\Rnu^m$,
 and for some constant $\ro>0$. 
Let $u(t,x)$ be a generalized solution to problem \rf{linearsys} which is of class $\C^{1,1}(\ovl \F_T)$
and such that $\hat \tet_u$ satisfies \rf{i-w}.
 Further let $v=|u|^2$.
Then, there exist a number $\tau\in (0,T]$ and a constant $\gm>0$, where $\tau$ depends on
 the common bound $\mc A$ over $\ovl\F_T$ for the coefficients $A_i$, $B_i$, $f_i$, $f$, $A$, $C$,
 and also on $\hat L_E$, $\ro$, $n$, and $\la_n(\F)$, and 
$\gm$ depends on the same quantities as $\tau$ and on $\sup_{\ovl \F} |u_0|$,  such that
\aaa{
\lb{boundC}
\|v^\al \|_{0,\tau} \lt \gm\, \al\, \sqrt{\la_{n+1}(\F^\al_\tau(v))} \quad \text{for all} \;\,  \al \gt \sup_{\ovl\F}|u_0|^2 + 1.
}
\end{lem}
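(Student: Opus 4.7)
The plan is to adapt the De Giorgi truncation technique to the vector-valued non-local setting. I would take the scalar product of \rf{linearsys} with $u(t,x)\, v^\al(t,x)$ (a vector times a scalar truncation, where $v=|u|^2$ and $v^\al=(v-\al)^+$) and integrate over $\F\times[0,t']$ for $t'\in(0,\tau]$, with $\tau$ to be chosen small at the end. The identities $u\cdot\pl_t u=\tfrac12\pl_t v$ and $\pl_{x_j}v=2\,u\cdot\pl_{x_j}u$ collapse the time derivative to $\tfrac14\pl_t(v^\al)^2$, whose time integral produces $\tfrac14\|v^\al(t',\fdot)\|_{L_2(\F)}^2$ up to the initial piece; the latter is controlled via the hypothesis $\al>\sup_{\ovl\F}|u_0|+1$ and absorbed into the constant $\gm$ (which, as the statement allows, is permitted to depend on $\sup_{\ovl\F}|u_0|$).

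\textbf{Local contributions.} After integrating by parts via Lemma \ref{lem25} (boundary terms vanish because $u=0$ on $\pl\F$), the principal part $-\pl_{x_i}(\hat a_{ij}\pl_{x_j}u)$ contributes a non-negative term $v^\al\,\hat a_{ij}\pl_{x_i}u\cdot\pl_{x_j}u$ together with $\tfrac12\chi_{\{v>\al\}}\hat a_{ij}\pl_{x_i}v\,\pl_{x_j}v$, which uniform ellipticity bounds below by $\tfrac{\ro}{2}|\pl_x v^\al|^2$. The remaining coefficient terms ($A_iu$, $f_i$, $B_i\pl_{x_i}u$, $Au$, $f$) I would control by Cauchy-Schwarz and Young's inequalities using the uniform bound $\mc A$ and the pointwise identity $v=v^\al+\al$ on $\F^\al_\tau(v)$, which converts powers of $|u|$ and $v$ into $v^\al$ plus $\al$-dependent terms. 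Tuning the Young parameter to absorb a fraction of $\|\pl_x v^\al\|^2_{L_2(\F_{t'})}$ into the ellipticity contribution would leave a right-hand side of the shape $C_1\|v^\al\|^2_{L_2(\F_{t'})}+C_2\,\al^2\,\la_{n+1}(\F^\al_\tau)$.

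\textbf{Non-local term (the crux).} The heart of the argument is the contribution $\int u\cdot C(\hat\tet_u)\,v^\al$. Positive homogeneity and $\|C\|_{\mc H}\lt\mc A$ bound it by $\mc A\int_{\F^\al_{t'}}|u|\,\|\hat\tet_u\|_E\,v^\al$. I would apply H\"older with exponents $(4,4,2)$, invoke \rf{i-w} to handle $\|\hat\tet_u\|_{L_4}^4$, and use $|u|^4=v^2\lt 2(v^\al)^2+2\al^2$ on $\F^\al_{t'}$. A final Young step splitting the resulting product of $L_4$-norms would yield an estimate of exactly the same shape as the local terms, namely $C(\mc A,\hat L_E)\bigl[\|v^\al\|^2_{L_2(\F_{t'})}+\al^2\,\la_{n+1}(\F^\al_\tau)\bigr]$. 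This is the only step where the non-locality genuinely enters, and it closes only because \rf{i-w} furnishes precisely an $\al^2\la_{n+1}$-type error that matches the De Giorgi scaling.

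\textbf{Gronwall, and the main obstacle.} Assembling all contributions gives
\[
\|v^\al(t',\fdot)\|^2_{L_2(\F)}+\|\pl_x v^\al\|^2_{L_2(\F_{t'})}\lt C_3\!\int_0^{t'}\!\|v^\al(t,\fdot)\|^2_{L_2(\F)}\,dt+C_4\,\al^2\,\la_{n+1}(\F^\al_\tau),
\]
with $C_3,C_4$ depending only on $\mc A,\hat L_E,\ro,n,\la_n(\F)$. Applying Gronwall's lemma and taking the supremum over $t'\in[0,\tau]$, with $\tau$ chosen small enough so that $e^{C_3\tau}$ is a harmless amplification, would yield the squared form of \rf{boundC} with $\gm^2=C_4\,e^{C_3\tau}$. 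The main obstacle throughout is the non-local coefficient $C(\hat\tet_u)$: no pointwise or local energy control of $\|\hat\tet_u\|_E$ is available, and the fourth power in \rf{i-w} (rather than a square) is precisely what the $(4,4,2)$-H\"older split demands, making (A10) not merely convenient but essential for the De Giorgi iteration to close.
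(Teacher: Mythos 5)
Your proposal is correct, and apart from one step it is essentially the paper's own proof. The choice of test function ($2u v^\al$, up to an immaterial factor of $2$), the collapse of the time term to $\tfrac12\pl_t(v^\al)^2$, the integration by parts via Lemma \ref{lem25} (justified because $\eta=2uv^\al$ vanishes on $\pl\F$), the term-by-term Young estimates absorbing an $\eps$-fraction of $|\pl_x v^\al|^2$ into the ellipticity contribution, and in particular the treatment of the non-local term via a $(4,4,2)$-type Young/H\"older split feeding into \rf{i-w}, all coincide with the paper. The only genuine divergence is the closing step. The paper does not invoke Gronwall. It assembles the energy estimate into an inequality of the schematic form
\aa{
\td\ro\,\|v^\al\|^2_{0,\tau}\ \lt\ \mathrm{const}\cdot\Big(\al^2\,\la_{n+1}(\F^\al_\tau)+\|v^\al\|^2_{L_2(\F_\tau)}\Big),
}
and then absorbs the last term directly into the left-hand side by means of the parabolic interpolation inequality (3.7) of \cite{lady} (p.~76), which bounds $\|v^\al\|_{L_2(\F_\tau)}$ by $\la_{n+1}(\F^\al_\tau)^{\frac1{n+2}}\|v^\al\|_{0,\tau}$; since $\la_{n+1}(\F^\al_\tau)\lt\tau\la_n(\F)$, choosing $\tau$ small renders the absorption coefficient less than $\td\ro/2$, so the smallness of $\tau$ is structural and $\gm$ comes out clean (no exponential factor). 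Your Gronwall closure is also valid: the time-localized version of the same estimate yields a linear integral inequality in $\|v^\al(t',\cdot)\|^2_{L_2(\F)}$ with forcing $C\,\al^2\la_{n+1}(\F^\al_\tau)$, and Gronwall then gives \rf{boundC} with $\gm$ inflated by $e^{C\tau}$. Your route is slightly more elementary, sidestepping the cited interpolation inequality, but at the cost that the smallness of $\tau$ becomes cosmetic rather than load-bearing. Both approaches land on the estimate with the parameter dependences the statement allows.
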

\begin{proof}
Let $\tau\in (0,T]$.
 Multiplying PDE \rf{linearsys} scalarly by a $W^{1,p}(\ovl \F_\tau)$-function $\eta(t,x)$ ($p>1$)
vanishing on $\pl \F_\tau$ and applying the integration-by-parts formula (Lemma \ref{lem25}), we obtain
\mmm{
\lb{eq56}
 \int_{\F_\tau} \Big[(u_t(t,x), \eta(t,x) )+
 \sum_{i=1}^n  \Big( \sum_{j=1}^n \hat a_{ij}(t,x) u_{x_j} + A_i(t,x) u + f_i(t,x), \eta_{x_i}(t,x) \Big) \\+ 
\Big(\sum_{i=1}^nB_i(t,x) u_{x_i} + A(t,x)u + f(t,x) + C(t,x)\big(\hat \tet_u (t,x)\big),\eta(t,x) \Big)\Big] \, dt dx=0.
}
For simplicity of notation, we write $\F^\al_\tau$ for $\F^\al_\tau(v)$.
Define $\eta(t,x)=2u(t,x)v^\al(t,x)$
and note that $v^\al$ and its derivatives vanish outside of $\F_\tau^{\al}$.
Since
 $(\pl_t u ,\eta)=2(\pl_t u, u)v^\al=(\pl_t v) v^\al= \pl_t(v^\al) v^\al=\frac12 \pl_t(v^\al)^2$,
 we rewrite \rf{eq56} as follows
\aaa{
\frac12 \int_\F (v^\al)^2  \Big|_0^{\tau} dx & +
2\int_{\F_{\tau}^\al}\Big[ \Big( \sum_{i=1}^n\Big(\sum_{j=1}^n \hat a_{ij}(t,x) u_{x_j} + 
A_i(t,x) u + f_i(t,x)\Big) ,  (uv^\al)_{x_i} \Big) \notag\\
&+ 2\Big( \sum_{i=1}^nB_i(t,x) u_{x_i} + A(t,x)u + C(t,x)(\hat \tet_u)  + f(t,x) ,  uv^\al \Big)\Big] \, dt dx=0.
\lb{eq57}
}
Note that the following inequalities hold on ${\F_{\tau}^{\al}}$:
\aa{
&\mml{
2\sum_{i,j=1}^n \hat a_{ij}(t,x) (u_{x_j}, (uv^\al)_{x_i}) = 2\sum_{i,j=1}^n \hat a_{ij}(t,x) (u_{x_i}, u_{x_j }) v^\al + \sum_{i,j=1}^n  
\hat a_{ij}(t,x) v_{x_j} v^\al _{x_i} \\   \gt 2\ro |u_x|^2 v^\al + \ro(v^\al_x)^2;
}\\
&\mml{
 2 ( A_i u , (uv^\al)_{x_i}) \lt 2 |A_i| (|u| |u_{x_i}| v^\al + v |v^\al_{x_i}|) \lt \frac1\epsilon |A_i|^2 v v^\al + 
 \frac1\epsilon |A_i|^2 v^2 \\   \hspace{3.5cm} + \epsilon v^\al |u_{x_i}|^2 + \epsilon  |v^\al_{x_i}|^2
 \lt \frac{2}{\epsilon}|A_i|^2 v^2  + \epsilon v^\al |u_{x_i}|^2 + \epsilon  |v^\al_{x_i}|^2;
  }\\
 &\mml{
2 ( f_i, (uv^\al)_{x_i}) \lt 2 |f_i| ( |u_{x_i}| v^\al + |u| |v^\al_{x_i}|)  \lt \frac1\epsilon |f_i|^2 (v^\al +  v)
+ \epsilon\big[ v^\al |u_{x_i}| ^2+  |v^\al_{x_i}|^2\big];
}\\
&\mml{
 2(B_i u_{x_i} , uv^\al) \lt \frac1\epsilon |B_i|^2  v v^\al + \epsilon  |u_{x_i}|^2v^\al;  \qquad 2(Au, uv^\al) \lt  2|A| v^2;
}\\
&\mml{
 2(f, uv^\al)  \lt  2   |f| v^{\frac32}  \lt 2 |f|(1+v^2);
}\\
&\mml{
2\hs \int_{\F^\al_\tau} \hsp (C (\hat \tet_u), uv^\al) dt dx \lt \mc A \int_{\F^\al_\tau} \hsp 
\big(\|\hat\tet_u\|^4_E + v^2 + (v^\al)^2\big) dt dx 
\lt \hat {\mc A}  \Big[ \int_{\F^\al_\tau} \hs v^2 dt dx + \al^2 \la(\F^\al_\tau)\Big],
}
}
where $\epsilon>0$ is a small constant and the last inequality holds by \rf{i-w} with
$\hat {\mc A}$ being a constant that depends only on $\mc A$ and the constant $\hat L_E$ from \rf{i-w}.  
By virtue of these inequalities, from \rf{eq57} we obtain
\mmm{
\lb{456k}
\frac12 \int_\F (v^\al(\tau,x))^2 \,dx + \ro \int_{\F_\tau^{\al}} \{2 |u_x|^2 v^\al + (v^\al_x)^2 \} dxdt \lt   \frac12 \int_\F v^\al(0,x)^2 \,dx \\
+ \int_{\F_{\tau}^{\al}} \big( \td{\mc A}_\epsilon (1+v^2) 
+  3\epsilon |u_x|^2v^\al + 2\epsilon |v^\al_x|^2 \big) \, dt dx + \hat{\mc A}\, \al^2 \la(\F^\al_\tau),
}
where $\mc{\td A}_\epsilon = \epsilon^{-1} \sup_{\ovl\F_\tau}\big( 2 \sum_{i=1}^n|A_i|^2 +  
\sum_{i=1}^n|f_i|^2  + \sum_{i=1}^n|B_i|^2  + \epsilon |A| + \epsilon |f|   \big)$.
Picking $\epsilon=\frac{\ro}4$ and defining  $\td \ro = \min(\frac12,\frac{\ro}{2})$, 
for $\al\gt \sup_{\ovl\F} |u_0|^2 + 1$, \rf{norm} and \rf{456k} imply
\mmm{
\lb{456h}
\td\ro\|v^\al \|^2_{0,\tau} =
 \td\ro \Big( \int_\F v^\al(\tau,x)^2 \,dx  +\int_{\F_\tau^{\al}} (v^\al_x)^2 \, dtdx \Big) \\
\lt 
 \td {\mc A}_{\frac{\ro}4}  \int_{\F_\tau^{\al}}  (1+v^2)  \, dtdx +  \hat{\mc A}\, \al^2 \la_{n+1}(\F^\al_\tau)
 \lt \bar {\mc A} \big(\|v^\al\|^2_{L_2(\F_\tau^\al)} + \al^2 \la_{n+1}(\F^\al_\tau)\big),
}
where $\bar {\mc A} = 3 \td {\mc A}_{\frac{\ro}4} + \hat{\mc A}$.
Further, from inequality (3.7) (p. 76) in \cite{lady} it follows that 
$\|v^\al\|_{L_2(\F_\tau)} \lt \bar\gm  \la_{n+1}(\F^\al_\tau)^{\frac1{n+2}} \|v^\al\|_{0,\tau}$, where $\bar\gm >0$ is a constant
depending on the space dimension $n$. Furthermore, since
$\la_{n+1}(\F^\al_\tau) \lt \tau \la_n(\F)$, we can pick 
$\tau$ sufficiently small such that $\bar {\mc A} \bar\gm^2 (\tau \la_n(\F))^{\frac2{n+2}}\lt \td\ro / 2$.
This implies \rf{boundC}
with $\gm = (2\bar{\mc A} \, \td\ro^{-1})^\frac12$.
\end{proof}
\begin{lem}
\lb{lem98j}
If, under assumptions of Lemma \ref{lem99h}, $f_i = f = 0$, then \rf{boundC} holds for all
$\al \gt \sup_{\ovl \F} |u_0|^2$. 
\end{lem}
\begin{proof}
If $f_i = f = 0$, then the arguments preceding inequality \rf{456h} imply that
\aa{
\td\ro\|v^\al \|^2_{0,\tau}
 \lt \frac12 \int_\F (v^\al(0,x))^2 \,dx   +
 \td {\mc A}_{\frac{\ro}4}  \int_{\F_\tau^{\al}}  v^2  \, dtdx +  \hat{\mc A}\, \al^2 \la_{n+1}(\F^\al_\tau)
}
with $\al \gt \sup_\F |u_0|^2$. The rest of the proof is the same.
\end{proof}
\begin{thm}[Maximum principle for systems of non-local PDEs of form \rf{linearsys}] 
\lb{max-div}
Let assumptions of Lemma \ref{lem99h} be fulfilled. Further let a solution $u$ to problem \rf{linearsys} vanishes on $\pl \F$.
Then $\sup_{\ovl\F_T} |u|$ is bounded by a constant depending only on 
 $\mc A$, $\nu$, $n$, $T$, $\la_n(\F)$, $\hat L_E$, and linearly depending on  $\sup_{\ovl\F} |u_0|$. 
\end{thm}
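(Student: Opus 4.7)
The strategy is to apply the $L^2$-energy bound of Lemma \ref{lem99h} together with the De Giorgi-type iteration in Proposition \ref{prop16} to the scalar function $v = |u|^2$ on a short interval $[0,\tau]$, and then cover $[0,T]$ by iterating this argument on consecutive sub-intervals. Since $u$ vanishes on $\pl\F$, so does $v$; in particular $\sup_{(\pl\F)_\tau} v = 0 \lt \hat\al$, where $\hat\al := \sup_{\ovl\F}|u_0|^2 + 1$. Lemma \ref{lem99h} furnishes $\tau \in (0,T]$ depending only on $\mc A$, $\hat L_E$, $\ro$, $n$, $\la_n(\F)$, and a constant $\gm > 0$ depending additionally on $\sup_{\ovl\F}|u_0|$, such that
\[
\|v^\al\|_{0,\tau} \lt \gm\,\al\,\sqrt{\la_{n+1}(\F^\al_\tau(v))} \qquad \text{for all } \al > \hat\al.
\]
Proposition \ref{prop16} applied to $\phi = v$ then yields $\sup_{\ovl\F_\tau}|u|^2 \lt 2\hat\al\bigl(1+\dl\,\gm^n\,\tau\,\la_n(\F)\bigr)$, whence $\sup_{\ovl\F_\tau}|u| \lt C_1\bigl(\sup_{\ovl\F}|u_0|+1\bigr)$ with $C_1$ depending only on the listed structural quantities.

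To extend the estimate from $[0,\tau]$ to $[0,T]$, I iterate on the intervals $I_k = [k\tau,(k+1)\tau]$ for $k = 1,\ldots,N-1$ with $N = \lceil T/\tau\rceil$, treating $u(k\tau,\fdot)$ as the new initial data at each step. The system \rf{linearsys} is invariant under time shifts, so the whole argument applies with unchanged structure on each $I_k$. The crucial point is that $\tau$ depends only on the coefficient bound $\mc A$, on $\hat L_E$, $\ro$ and on $n, \la_n(\F)$, so $N$ is a fixed integer not depending on the data. Chaining the step-by-step bounds $\sup_{I_k \times \ovl\F}|u| \lt C_1\bigl(\sup_\F |u(k\tau,\fdot)|+1\bigr)$ produces a telescoping estimate of the form $\sup_{\ovl\F_T}|u| \lt C_1^N \sup_{\ovl\F}|u_0| + R_N$, which is linear in $\sup_{\ovl\F}|u_0|$, as claimed.

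The main obstacle I anticipate is the careful book-keeping needed to confirm that the dependence of $\gm$ on the ``initial data'' sup-norm at each iteration step does not spoil the overall linear dependence on $\sup_{\ovl\F}|u_0|$. Tracing the proof of Lemma \ref{lem99h}, one sees that $\gm = \bigl(2\bar{\mc A}\,\td\ro^{-1}\bigr)^{1/2}$ with $\bar{\mc A}$ built from $\mc A$ and $\hat L_E$ alone, while the initial-data sup-norm enters only through the threshold $\hat\al$ in Proposition \ref{prop16}; hence the constants $C_1$ at each iteration are uniform in $k$, and the recursion is genuinely affine in $\sup_{\ovl\F}|u_0|$. A minor checkpoint is to verify that the $\C^{1,1}$-regularity of $u$ and the validity of inequality \rf{i-w} for $\hat\tet_u$ are preserved on each time-shifted interval, which is immediate from the assumptions.
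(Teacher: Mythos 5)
Your proof is correct and follows essentially the same route as the paper's: bound $|u|^2$ on a short interval $[0,\tau]$ via Lemma~\ref{lem99h} and Proposition~\ref{prop16}, then iterate on consecutive intervals of length $\tau$ (via time shifts), using that $\tau$ is independent of the data so the number of steps is controlled by $T$. Your explicit observation that $\gm = (2\bar{\mc A}\,\td\ro^{-1})^{1/2}$ in fact does not depend on $\sup_{\ovl\F}|u_0|$ — despite the statement of Lemma~\ref{lem99h} listing it among $\gm$'s dependencies — is precisely what makes the per-step multiplier uniform in $k$ and the final bound genuinely affine in $\sup_{\ovl\F}|u_0|$, a point the paper's proof uses implicitly without flagging.
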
 
\begin{proof}
It follows from Proposition \ref{prop16} and Lemma \ref{lem99h} that there  exists a bound for $\sup_{\ovl\F_\tau}|u|$ 
depending only on $\mc A$, $\nu$, $n$, $\la_n(\F)$, $\hat L_E$, and $\sup_{\ovl \F} |u_0|$, where $\tau\in (0,T]$ is sufficiently small
and depends on  $\mc A$, $\nu$, $n$, $\la_n(\F)$, and $\hat L_E$.
Remark that by Proposition \ref{prop16},  the above bound is a multiple of  $\hat\al = \sup_{\ovl \F} |u_0| +1$.
 It is important to emphasize that $\tau$ 
does not depend on $\sup_{\ovl\F} |u_0|$.
By making the time change $t_1 = t-\tau$ in problem \rf{linearsys}, we obtain a bound for $\sup_{\ovl\F_{\tau,2\tau}}|u|$
depending on  $\mc A$, $\nu$, $n$, $\la_n(\F)$, and $\sup_{\ovl\F} |u(\tau,x)|$,
where the latter quantity was proved to have a bound which is a multiple of $\sup_{\ovl \F} |u_0| +1$.
On the other hand, by Proposition \ref{prop16},
the bound for  $\sup_{\ovl\F_{\tau,2\tau}}|u|$ is a multiple of $\sup_{\ovl\F} |u(\tau,x)|+1$. 
 In a finite number of steps, depending on $T$, we obtain
a bound for $|u|$ in the entire domain $\ovl\F_T$. This bound will depend linearly on 
$\sup_{\ovl\F} |u_0|$ by Proposition \ref{prop16}.
The theorem is proved.
\end{proof}
\begin{cor}
\lb{cor-zero}
Let assumptions of Theorem \ref{max-div} be fulfilled, and let $f=f_i=u_0 = 0$ on $\F_T$. 
Then, $u=0$.
\end{cor}
\begin{proof}
Under assumptions of the corollary, we can set $\al=0$ in Lemma \ref{lem98j}. This implies that
$\|u\|^2_{0,\tau} = 0$ for sufficiently small $\tau\in (0,T]$. Since $u$ is continuous on $\F_T$,
then it is zero on $\ovl \F_\tau$. The same argument as in Theorem \ref{max-div} implies
that $u=0$ on $\ovl \F_T$.
\end{proof}
Since the maximum principle for systems of non-local PDEs of form \rf{linearsys} is obtained, we
can prove the theorem on existence of an a priori bound for $\pl_t u$.
\begin{thm} 
Let (A1)--(A10) hold, and let
$u(t,x)$ be a $\C^{1,2}$-solution to problem \rf{PDE}--\rf{initialsystem}. Then, there exists a constant $M_2$, 
depending only on
$M$, $\hat M$, $M_1$, $\mc K$, $K_{\xi,\zeta}$, $T$, $\la_n(\F)$, $\hat L_E$, $\|\ffi_0\|_{\C^{2+\beta}(\ovl\F)}$, such that
\aa{
\sup_{\ovl \F_T} |\pl_t u| \lt M_2.
}
\end{thm}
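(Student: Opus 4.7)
\emph{Proof plan.} The strategy is to apply the maximum principle for non-local linear-like divergence-form PDEs (Theorem \ref{max-div}) to the finite-difference quotient $v_\Delta(t,x) = (\Delta t)^{-1}\big(u(t+\Delta t,x) - u(t,x)\big)$ on $\ovl \F_{T-\Delta t}$, obtaining a bound on $\|v_\Delta\|_\infty$ that is uniform in $\Delta t$, and then passing to the limit $\Delta t \to 0$ via the pointwise convergence $v_\Delta \to \pl_t u$ granted by $u\in \C^{1,2}(\ovl\F_T)$. Note that $v_\Delta$ inherits $\C^{1,2}$-regularity from $u$, vanishes on $\pl \F$ (since $u$ does), and is therefore an admissible test function for Theorem \ref{max-div}.

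The first step is to derive an equation for $v_\Delta$ of the form \rf{linearsys}. Subtracting \rf{PDE} at times $t$ and $t+\Delta t$ and dividing by $\Delta t$, one obtains the leading elliptic term $-a_{ij}(t+\Delta t,x,u^+)\,\pl^2_{x_ix_j} v_\Delta$, whose ellipticity constant $\hat\mu(M)$ is supplied by (A1). The remaining contributions come from difference quotients of $a_{ij}, a_i, a$ and of $\tet_u$. The former are handled by the mean value theorem in the $(t,u,p,w)$-arguments, producing integrated Jacobians bounded via (A6) and (A8); the latter is handled by (A10), which yields $(\Delta t)^{-1}\big(\tet_u(t+\Delta t,\fdot) - \tet_u(t,\fdot)\big) = \hat\tet_{v_\Delta} + \zeta_{u,u_x} v_\Delta + \xi_{u,u_x}$ with the non-local ingredient $\hat\tet_{v_\Delta}$ satisfying \rf{i-w}. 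Terms that would naively carry $\pl^2_{xx}u$ as a coefficient of $v_\Delta$ (coming from $\pl_u a_{ij}$ in the difference quotient of $a_{ij}$) must be recast in divergence form via $\pl_u a_{ij}\, u_{x_i x_j}\, v_\Delta = \pl_{x_i}\!\big(\pl_u a_{ij}\, u_{x_j}\, v_\Delta\big) - \big(\pl^2_{xu}a_{ij} + \pl^2_{uu}a_{ij}\, u_{x_i}\big)u_{x_j}\, v_\Delta - \pl_u a_{ij}\, u_{x_j}\, \pl_{x_i} v_\Delta$, after which every resulting coefficient is bounded by a polynomial expression in $\mc K, M_1, \td\mu(M), K_{\xi,\zeta}$. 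The outcome is an equation for $v_\Delta$ of precisely the form \rf{linearsys}, with $\hat a_{ij}=a_{ij}(t+\Delta t,x,u^+)$, with $C = \pl_w a + \sum_i (\pl_w a_i)\, u_{x_i}^+$ which is positively homogeneous and bounded in $\mc H(E,\Rnu^m)$ by the remark following (A10), and with bounded $A_i, B_i, A, f_i, f$.

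The second step is to control $v_\Delta$ on the parabolic boundary and conclude. Since $u$ vanishes on $\pl \F$, so does $v_\Delta$. For the initial trace, evaluating \rf{PDE} at $t=0$ expresses $\pl_t u(0,x)$ explicitly in terms of $\ffi_0, \pl_x \ffi_0, \pl^2_{xx} \ffi_0, \tet_{\ffi_0}$ and the coefficients, so that $\sup_{\ovl\F}|\pl_t u(0,\fdot)|$ is bounded by a constant depending only on $\mu(M), \mc K, M, M_1, \hat M, \|\ffi_0\|_{\C^{2+\beta}(\ovl\F)}$. Uniform continuity of $\pl_t u$ on the compact set $\ovl\F_T$ then transfers this bound to $v_\Delta(0,\fdot)$ for all sufficiently small $\Delta t$. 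Theorem \ref{max-div} applied to $v_\Delta$ therefore yields $\sup_{\ovl\F_{T-\Delta t}}|v_\Delta| \lt M_2$ with $M_2$ depending only on the quantities listed in the statement. Passing to the limit $\Delta t \to 0$ gives the desired bound on $\pl_t u$.

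The main obstacle is the first step: producing a divergence-form reformulation of the time-differentiated equation whose coefficients do not depend on $\|\pl^2_{xx}u\|_{L^\infty}$, a quantity not available among the allowed bounds. This forces the systematic use of the higher regularity of $a_{ij}$ in (A8) (namely $\pl^2_{ux}a_{ij}, \pl^2_{uu}a_{ij}, \pl^2_{xt}a_{ij}$) to absorb every occurrence of $\pl^2_{xx}u$ into derivatives of $\pl_x u$ (controlled by $M_1$), together with the chain rule and positive homogeneity of the G\^ateaux derivatives $\pl_w a, \pl_w a_i$ from the remark following (A10) to cast the non-local contribution as an admissible operator in $\mc H(E,\Rnu^m)$.
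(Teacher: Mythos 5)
Your proposal is correct and essentially reproduces the paper's argument: apply the maximum principle of Theorem \ref{max-div} to the finite-difference quotient $v_\Delta$, which solves a divergence-form system of type \rf{linearsys} with coefficients bounded via (A6), (A8), (A10), estimate the initial trace $v_\Delta(0,\cdot)$ from \rf{PDE} itself, and pass to the limit $\Delta t\to 0$. The only cosmetic difference is that the paper rewrites \rf{PDE} in divergence form \emph{before} taking the difference quotient — so the divergence structure and hence the coefficients $\hat a_{ij},A_i,f_i$ inside $\pl_{x_i}[\cdot]$ are inherited automatically and $\pl^2_{xx}u$ never surfaces — whereas you take the difference quotient of the non-divergence form and then repair the $\pl_u a_{ij}\,u_{x_ix_j}v_\Delta$ (and analogous $\pl_t a_{ij}\,u_{x_ix_j}$) terms with the product-rule identity; both orders of operations land in the same system and give the same bound.
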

\begin{proof}
Rewrite \rf{PDE} in the divergence form, i.e.,
\aaa{
\lb{div-form}
\pl_t u - \sum_{i=1}^n \pl_{x_i} \Big[\sum_{j=1}^n a_{ij}(t,x,u) u_{x_j}\Big] + \hat a(t,x,u,u_x,\tet_u) = 0
\quad \text{with}
}
$\hat a(t,x,u,p,w) =
\sum_{i=1}^n a_i(t,x,u,p,w)p_i + a(t,x,u,p,w)  + \sum_{i,j=1}^n \pl_{x_i} a_{ij}(t,x,u) p_j$
 $+ \sum_{i,j=1}^n (\pl_u a_{ij}(t,x,u),p_i)p_j$, 
where $p_i$ is the $i$th column of the matrix $p$, 
and $u$, $u_x$ and $\tet_u$ are evaluated at $(t,x)$.
 Further, we define
$v(t,x) = (\Dl t)^{-1} \big(u(t+\Dl t,x) - u(t,x)\big)$ and $t' = t+\Dl t$, where $\Dl t$ is fixed.
If $t=0$, we assume that $\Dl t>0$, and if $t=T$, then $\Dl t<0$.
The PDE for the function $v$ takes form \rf{linearsys} with
\eee{
\hat a_{ij}(t,x) = a_{ij} (t',x,u(t',x));\\
A_i(t,x) = \sum_{j=1}^n u_{x_j}(t,x) \int_0^1 d\la \, \pl_u a_{ij}(t,x,\la u(t',x) + (1-\la)u(t,x))^{\! \top};\\
f_i(t,x) =  \sum_{j=1}^n \int_0^1 d\la \, \pl_t a_{ij} (t+\la \Dl t, x, u(t',x)) \,  u_{x_j}(t,x); \\
f(t,x) =    \int_0^1 d\la \, \pl_t \hat a(t+\la \Dl t,x, u(t',x), u_x(t',x), \tet_u(t',x))\\
\hspace{1.1cm} + \int_0^1 d\la\, \pl_w \hat a(t,x,u(t,x),u_x(t,x),\la\tet_u(t',x) + (1-\la)\tet_u(t,x)) 
\big(\xi_{u,u_x}(t,x)\big); \\
A(t,x) =  \int_0^1 d\la \, \pl_u \hat a(t,x,\la u(t',x) + (1-\la) u(t,x), u_x(t',x), \tet_u(t',x))\\
 \hspace{1.1cm} + \int_0^1 d\la\, \pl_w \hat a(t, x,u(t,x),u_x(t,x),\la\tet_u(t',x) + (1-\la)\tet_u(t,x)) \, \zeta_{u,u_x}(t,x); \\
 B_i(t,x) = \int_0^1 d\la \, \pl_{p_i} \hat a(t,x, u(t,x), \la u_x(t',x) + (1-\la) u_x(t,x), \tet_u(t',x));\\
 C(t,x) = \int_0^1 d\la\, \pl_w \hat a(t,x,u(t,x),u_x(t,x),\la\tet_u(t',x) + (1-\la)\tet_u(t,x)).
}
Above, $\xi_{u,u_x}$ and $\zeta_{u,u_x}$ are bounded functions from representation \rf{repa10}.
Remark that the above coefficients are bounded by a constant, say $\mc A$, depending  on $M$, $\hat M$, $M_1$, $\mc K$,
and $K_{\xi,\zeta}$
(where the latter is the bound for $\xi_{u,u_x}$ and $\zeta_{u,u_x}$ defined in (A10)).
By Theorem \ref{max-div}, $\sup_{\ovl\F_T} |v|$ is bounded by a constant depending only on $\mc A$,
$T$, $\la_n(F)$, $\hat L_E$, and $\sup_{\ovl\F }|v(0,x)|$. Moreover, the dependence on 
 $\sup_{\ovl\F }|v(0,x)|$ is linear.
Letting $\Dl t$ go to zero, we obtain that the bound
for $|\pl_t u|$ on $\ovl \F_T$ depends only on $\mc A$,
$T$, $\la_n(F)$, $\hat L_E$, and $\sup_{\ovl\F}|\pl_t u(0,x)|$.
Finally,  equation \rf{PDE} implies that $|\pl_t u(0,x)|$ can be estimated via
$\|\ffi_0\|_{\C^2(\ovl \F)}$, and the bounds for the coefficients $a_{ij}$, $a_i$, and $a$ 
over $\mc R_2$, defined by \rf{r2}. Further, by virtue of (A1) and (A5),  the latter bounds
can be estimated by a constant depending only on $M$, $\hat M$, and $M_1$.
The theorem is proved.
\end{proof}
\subsection{H\"older norm estimates}
In this subsection, we prove that any $\C^{1,2}$-solution to problem \rf{PDE}--\rf{initialsystem}
is, in fact, of class $\C^{1+\frac{\beta}2,2+\beta}$. Moreover, we obtain 
a bound for its $\C^{1+\frac{\beta}2,2+\beta}$-norm.
Unlike the bound for the gradient,  this bound cannot be obtained directly from the results
 of  \cite{lady}  by freezing $\tet_u$. 
Our proof essentially relies  on the estimate of the time derivative $\pl_t u$ obtained in
the previous subsection. 
\begin{thm}(H\"older norm estimate)
\lb{h-n-est-th}
 Let (A1)--(A10) hold, and let 
$u(t,x)$  be a $\C^{1,2}(\ovl \F_T)$-solution to problem  \rf{PDE}--\rf{initialsystem}.
Further let $M$ and $M_1$ be the a priori bounds for $u$ and, respectively, $\pl_x u$ on $\ovl\F_T$ 
(whose existence was established by Theorems  \ref{maxminsys} and \ref{grad-est-th}).
Then, $u(t,x)$ is of class $\C^{1+\frac{\beta}2,2+\beta}(\ovl \F_T)$. Moreover, there exists a constant $M_3>0$
depending only on $M$, $\hat M$, $M_1$, $\mc K$, $K_{\xi,\zeta}$, $T$, $\la_n(\F)$, $\hat L_E$, $\|\ffi_0\|_{\C^{2+\beta}(\ovl\F)}$,   
and on the $\C^{2+\beta}$-norms of the functions defining the boundary $\pl \F$, such that
\aa{
\|u\|_{\C^{1+\frac{\beta}2,2+\beta}(\ovl \F_T)} \lt M_3.
}
\end{thm}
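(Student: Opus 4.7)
The plan is to reduce (\ref{PDE}) to a purely local quasilinear parabolic system to which the Schauder-type estimate of \cite{lady} applies, by treating $\tet_u(t,x)$ as a known H\"older-continuous $E$-valued coefficient function.

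First, by Theorems \ref{maxminsys}, \ref{grad-est-th} and \ref{ut-est}, we already have $\sup_{\ovl \F_T}|u| \leq M$, $\sup_{\ovl \F_T}|\pl_x u|\leq M_1$ and $\sup_{\ovl \F_T}|\pl_t u|\leq M_2$, so $u$ is Lipschitz in $(t,x)$, hence of class $\C^{\frac{\beta}2,\beta}(\ovl \F_T)$ for every $\beta\in(0,1)$. By (A9), $\tet_u$ is bounded with bounded derivatives $\pl_t \tet_u$ and $\pl_x \tet_u$, the bounds being controlled by $M_1$, $M_2$, $\hat M$ and $\hat L_E$. Consequently $\tet_u$ is Lipschitz, and in particular of class $\C^{\frac{\beta}2,\beta}(\ovl \F_T;E)$, with H\"older seminorms controlled by the listed data.

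Next, freeze $\tet_u(t,x)$ in the coefficients of (\ref{PDE}) as a known parameter function, and view $u$ as a solution of the local quasilinear parabolic system
\begin{equation*}
-\sum_{i,j=1}^n a_{ij}(t,x,u)\,u_{x_ix_j} + \sum_{i=1}^n \td a_i(t,x,u,\pl_x u)\,u_{x_i} + \td a(t,x,u,\pl_x u) + \pl_t u = 0,
\end{equation*}
subject to (\ref{initialsystem}), where $\td a_i(t,x,u,p):=a_i(t,x,u,p,\tet_u(t,x))$ and $\td a(t,x,u,p):=a(t,x,u,p,\tet_u(t,x))$. Assumption (A1) gives uniform parabolicity; by (A6)--(A8) together with the established H\"older regularity of $\tet_u$, the frozen coefficients $a_{ij}(\cdot,u(\cdot))$, $\td a_i$ and $\td a$ are $\beta$-H\"older in $x$ and appropriately regular in the remaining arguments, with all relevant norms bounded by $\mc K$ and the other listed quantities; and (A4), (A7) supply the required regularity of the initial data and of $\pl \F$. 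Applying the classical Schauder-type estimate for quasilinear parabolic systems of \cite{lady} --- whose applicability rests precisely on having a priori bounds for $u$, $\pl_x u$ and $\pl_t u$ --- to this local system yields $u\in \C^{1+\frac{\beta}2,2+\beta}(\ovl \F_T)$ together with the bound $\|u\|_{\C^{1+\frac{\beta}2,2+\beta}(\ovl \F_T)}\leq M_3$, with $M_3$ depending only on the quantities listed in the statement.

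The only point that really merits care is the dependence of constants: one must verify that the H\"older seminorms of $a_{ij}(t,x,u(t,x))$, $\td a_i$, $\td a$ and $\tet_u$ are each dominated by a constant depending solely on $M$, $\hat M$, $M_1$, $\mc K$, $K_{\xi,\zeta}$, $T$, $\la_n(\F)$, $\hat L_E$, $\|\ffi_0\|_{\C^{2+\beta}(\ovl\F)}$ and on the $\C^{2+\beta}$-norms of the functions describing $\pl \F$. This is a bookkeeping exercise based on (A8)--(A9), the Lipschitz regularity of $u$, and the fact that $M_2$ itself already incorporates $\hat L_E$ and $K_{\xi,\zeta}$ through Theorem \ref{ut-est}. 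No genuinely new obstacle arises here, since the non-local structure of (\ref{PDE}) has been absorbed into the already-controlled parameter function $\tet_u$; the hard work has been done upstream in obtaining the bound on $\pl_t u$.
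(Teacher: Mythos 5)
Your proposal is correct and follows essentially the same route as the paper: freeze $\tet_u$, regard it as a known H\"older-continuous $E$-valued coefficient (controlled via (A9) by the a priori bounds $M_1$ and $M_2$), and invoke the Schauder-type a priori estimate of Ladyzhenskaya et al.\ (Theorem 5.2, p.~587 in \cite{lady}) for the resulting local quasilinear parabolic system. One small clarification worth making: Theorem 5.2 of \cite{lady} does not itself demand an a priori bound on $\pl_t u$ as a hypothesis --- the reason the $\pl_t u$ bound is indispensable here is exactly the one you identify later, namely that the frozen coefficients acquire $t$-dependence through $\tet_u(t,x)$, so $\pl_t \td a$ involves $\pl_t \tet_u$, which (A9) ties to the bound on $\pl_t u$.
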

\begin{proof}
Freeze the function $\tet_u$ in the coefficients $a_i$ and $a$, 
and consider the following PDE with respect to $v$
\aaa{
\lb{PDE-lady1} 
- \sum_{i,j=1}^{n} a_{ij} (t,x,v)\pl^2_{x_i x_j}v +  \td a(t,x,v, \pl_x v)  +  \pl_t v=0,
}
where $\td a(t,x,v,p) = a(t,x,v,p,\tet_u(t,x))+ \sum_{i=1}^n  a_i(t,x,v,p,\tet_u(t,x)) p_i$ .
Let us prove that the coefficients of \rf{PDE-lady1} satisfy
the assumptions of Theorem 5.2 from \cite{lady} (p. 587) on the H\"older norm estimate.
First we show that the assumptions on the continuity of the partial derivatives
$\pl_t \td a$, $\pl_u \td a$,  $\pl_p \td a$
and on the $\beta$-H\"older continuity of $\td a$ in $x$, 
mentioned in the formulation of Theorem 5.2 in \cite{lady}, are fulfilled.
Indeed, they follow from (A8) and (A9). To see this, we first note that $a$ 
and $a_i$ depend on $t$ and $x$ not just via their first two arguments but also via 
the function $\tet_u(t,x)$ (assumed known a priori) whose differentiability in $t$ and $x$ follows from (A9).
Therefore, by (A8) and (A9), $a$ and $a_i$ are $\beta$-H\"older continuous in $x$ and 
differentiable in $t$.

Further, Theorem 5.2 of \cite{lady} introduces a common bound  (denote it by $\mc C$) for the partial derivatives
$\pl_t \td a$, $\pl_u \td a$,  $\pl_p \td a$ and the H\"older constant $[\td a]^x_\beta$ 
which, in case of \cite{lady}, exists due to the continuity of the above functions on
$\F_T \x \{|u|\lt M\} \x \{|p|\lt M_1\}$. However, in our case, 
 the expression for $\pl_t \td a$ will contain $\pl_t \tet_u$, and  the expression for $[\td a]^x_\beta$
will contain $\pl_x \tet_u$. Therefore, by (A9), the bound $\mc C$, required for the application of Theorem 5.2,
will depend on $M_1$ and $M_2$, i.e., the bounds for $\pl_x u$ and $\pl_t u$. That is why
the existence of a bound for $\pl_t u$ is indispensable and must be obtained in advance.


The verification of the rest of the assumptions of Theorem 5.2 in \cite{lady} is straightforward
and  follows from assumptions (A1), (A4), (A7), and (A8).
Since $v=u$ is a $\C^{1,2}(\ovl \F_T)$-solution to problem \rf{PDE-lady1}-\rf{initialsystem},
by aforementioned Theorem 5.2, $u$ belongs to class $\C^{1+\frac{\beta}2,2+\beta}(\ovl \F_T)$,
and its H\"older norm $\|u\|_{\C^{1+\frac{\beta}2,2+\beta}(\ovl \F_T)}$
is  bounded by a constant $M_3$,
depending on the constants specified in the formulation of this theorem.
\end{proof}
The rest of this subsection deals with estimates of other H\"older norms of the solution $u$ under assumptions
that do not require the a priori bound $M_2$ for $\pl_t u$. 
These estimates will
be useful in the proof of existence of solution to Cauchy problem \rf{PDE}--\rf{cauchysystem}.
The need of these bounds comes from the fact that $M_2$ depends on $\la_n(\F)$, the Lebesgue
measure of the domain $\F$.
\begin{thm} 
\lb{hn1}
Assume (A1)--(A7).
Let $u(t,x)$ be a generalized $\C^{0,1}(\ovl \F_T)$-solution to equation \rf{PDE} 
such that $|u|\lt M$ and $|\pl_x u| \lt M_1$ on $\ovl \F_T$.
Then, there exist a number $\al\in(0,\beta)$ and a constant 
$M_4$, both depending only on $M$, $M_1$, $\hat M$, $\beta$, $n$, $m$,
and $\sup_{\ovl \F}\|\ffi_0\|_{\C^{2+\beta}(\ovl \F)}$
such that
\aa{
\|u\|_{\C^{\frac\al2, \al}(\ovl \F_T)} \lt M_4.
}
\end{thm}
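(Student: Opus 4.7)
The plan is to freeze $\tet_u$ in the coefficients $a_i$ and $a$, reducing \rf{PDE} to a purely local quasilinear parabolic system, and then invoke a De Giorgi--Nash--Moser-type H\"older estimate for bounded-gradient generalized solutions from \cite{lady} that does \emph{not} require any a priori bound on $\pl_t u$. The a priori bounds $M$, $M_1$, $\hat M$ on $|u|$, $|\pl_x u|$, $\|\tet_u\|_E$, together with (A1)--(A6), are exactly what is needed to make the frozen problem fall inside the hypotheses of that theorem.

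First I would rewrite \rf{PDE} in divergence form, as in the proof of Theorem \ref{ut-est}, obtaining
\aa{
\pl_t u - \sum_{i=1}^n \pl_{x_i}\Big[\sum_{j=1}^n a_{ij}(t,x,u)u_{x_j}\Big] + \hat a(t,x,u,\pl_x u, \tet_u(t,x)) = 0,
}
where $\hat a$ absorbs the $a_i$-terms and the extra terms produced by $\pl_{x_i}a_{ij}$ and $\pl_u a_{ij}$. After freezing $\tet_u(t,x)$ (viewed as a known $E$-valued function with $\|\tet_u\|_E \lt \hat M$), the principal part $\{a_{ij}(t,x,u)\}$ is uniformly parabolic by (A1) with constants depending only on $M$, and by (A5)--(A6) the lower-order term $\hat a$ satisfies the quadratic growth bound $|\hat a(t,x,u,p,\tet_u(t,x))| \lt C(1+|p|)^2$ with $C$ depending only on $M$, $M_1$, $\hat M$. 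Hence $u$ is a bounded generalized solution to a quasilinear parabolic system in divergence form whose structural constants are under control.

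Second, I would apply the interior/boundary H\"older estimate from Chapter V of \cite{lady} for generalized solutions of quasilinear parabolic systems with bounded gradient (the companion estimate to Theorem 5.2, that gives only $\C^{\al/2,\al}$ regularity but does not demand continuity of $\pl_t u$). That theorem yields the existence of an exponent $\al \in (0,\beta)$ and a constant $M_4$, both determined solely by $M$, $M_1$, $\hat M$, $n$, $m$, $\beta$, the bounds on the coefficients in the region $\mc R$, the $\C^{2+\beta}$-regularity of $\pl \F$ from (A7), and the regularity $\|\ffi_0\|_{\C^{2+\beta}(\ovl\F)}$ of the initial datum from (A4), such that $\|u\|_{\C^{\al/2,\al}(\ovl \F_T)} \lt M_4$. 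Since $\ffi_0$ vanishes on $\pl \F$ by \rf{boundaryfunc}, the compatibility condition at the parabolic corner $\{0\} \x \pl \F$ is automatic.

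The main obstacle will be the boundary H\"older estimate: interior H\"older continuity of generalized solutions follows from the standard De Giorgi--Nash--Moser machinery, but pushing the estimate up to $(\pl \F)_T$ requires (A7) and a barrier argument near $\pl \F$, plus combining it with the initial H\"older bound coming from $\ffi_0 \in \C^{2+\beta}(\ovl\F)$. A secondary point to check is that freezing $\tet_u$ preserves exactly the $(t,x)$-regularity needed by the theorem from \cite{lady}: by (A8) the functions $a$ and $a_i$ are continuous in $(t,x,u,p)$ and $\beta$-H\"older in $x$ uniformly on $\mc R_2$, and the bound $\|\tet_u\|_E \lt \hat M$ restricts the $w$-argument to a bounded set, so the frozen coefficients inherit continuity in $(t,x)$ through the measurability of $\tet_u(t,x)$ alone -- which is sufficient for the De Giorgi--Nash--Moser estimate, as no smoothness of $\tet_u$ in $(t,x)$ is needed at this step.
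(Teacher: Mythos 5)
Your intuition is right that the $\C^{\frac\al2,\al}$ estimate should follow from a weaker, DGNM-type theorem that does not presuppose a bound on $\pl_t u$, and that freezing $\tet_u$ is part of the story. But there is a genuine gap in the way you invoke the machinery. You freeze only $\tet_u$ and then appeal to ``the standard De Giorgi--Nash--Moser machinery'' for a quasilinear \emph{system} in divergence form. De Giorgi--Nash--Moser H\"older continuity is a \emph{scalar} result; it does not extend to coupled parabolic systems (cf.\ De Giorgi's counterexample). After freezing only $\tet_u$, the equation is still a coupled quasilinear system: the coefficients $a_i(t,x,u,\pl_x u,\tet_u)$ and $a(t,x,u,\pl_x u,\tet_u)$ depend on \emph{all} components of $u$ and $\pl_x u$, so the equations for the $u^k$ do not decouple and DGNM cannot be applied component-wise. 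Your citation is also off: Chapter~V of \cite{lady} treats scalar quasilinear equations; systems are in Chapter~VII, which is what this theorem actually needs.

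The paper's proof closes this gap by freezing \emph{all} of $u$, $\pl_x u$, and $\tet_u$ inside the coefficients (this is legitimate because $|u|\lt M$, $|\pl_x u|\lt M_1$, $\|\tet_u\|_E\lt\hat M$ are all a priori bounds), so that \rf{PDE} becomes the \emph{linear} non-divergence system $\pl_t v - \sum_{ij}\td a_{ij}(t,x)\pl^2_{x_ix_j}v + \sum_i\td a_i(t,x)\pl_{x_i}v + \td a(t,x)=0$ with uniformly parabolic diagonal principal part $\td a_{ij}(t,x)=a_{ij}(t,x,u(t,x))$ and bounded coefficients $\td a_i(t,x)=a_i(t,x,u,\pl_x u,\tet_u)$, $\td a(t,x)=a(t,x,u,\pl_x u,\tet_u)$ depending only on $M$, $M_1$, $\hat M$ via (A1), (A5), (A6). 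Since $u$ solves this linear system (take $v=u$), Theorem~3.1 of \cite{lady} (p.~582, Chapter~VII on linear parabolic \emph{systems}) yields the $\C^{\frac\al2,\al}$ bound directly. There is no need to pass to divergence form (that rewriting is used in the separate proof of the $\pl_t u$ estimate, Theorem~\ref{ut-est}). To repair your argument: freeze $u$ and $\pl_x u$ in addition to $\tet_u$, keep the non-divergence form, and cite the linear-systems a priori estimate of Chapter~VII rather than a DGNM estimate for quasilinear systems (which does not exist).
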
 
\begin{proof}
Freeze the functions $u$, $\pl_x u$, and $\tet_u$ inside the coefficients $a_{ij}$, $a_i$, and $a$, and consider the linear PDE
with respect to $v$
\aaa{
\lb{PDE4}
&\pl_t v - \sum_{i,j=1}^n \td a_{ij}(t,x)\pl^2_{x_ix_j} v + \sum_{i=1}^n \td a_i(t,x)\pl_{x_i}v + \td a(t,x) = 0
\qquad \text{with}\\
\td a_i(t,x) = & a_i(t,x,u,\pl_x u, \tet_u), \quad \td a(t,x) = a(t,x,u,\pl_x u, \tet_u), \quad \td a_{ij}(t,x) = a_{ij}(t,x,u), \notag
}
where $v$, $u$, $\pl_x u$, and $\tet_u$ are evaluated at $(t,x)$. Note that by (A1), (A5), and (A6), 
$a_{ij}$, $\pl_x a_{ij}$, $\pl_u a_{ij}$, $a_i$, and $a$ 
are bounded in the region $\mc R_2$, defined by \rf{r2},
and the common bound depends on $M$, $M_1$, and $\hat M$.
The existence of the bound $M_4$ follows now from Theorem 3.1 of \cite{lady} (p. 582).
\end{proof}
\begin{thm}
\lb{hn2}
Assume (A1)--(A7). Further, assume the following conditions are satisfied in the region
$\mc R_2$, defined by \rf{r2}:
\bi
\item[(i)] $a_{ij}$, $a_i$, $a$ are H\"older continuous in $t,x,u,p$, with exponents 
$\frac\beta2, \beta, \beta, \beta$, respectively, and, moreover, locally Lipschitz 
and  Gat\^eaux differentiable $w$; all H\"older and Lipschitz constants are bounded (say, by a constant $\mc M$);
\item[(ii)] For any $\C^{1,2}(\ovl \F_T)$-solution $u(t,x)$ to problem  \rf{PDE}--\rf{initialsystem} and for some 
$\beta'\in (0,\beta)$, 
the bound for $[\tet_u]^t_{\frac{\beta'}2}$ is determined 
only by the bound for $[u]^t_{\frac{\beta'}2}$ and $M_1$; 
and the bound for $[\tet_u]^x_{\beta'}$ is determined only by $M_1$.
\ei
Let $u(t,x)$  be a $\C^{1,2}(\ovl \F_T)$-solution to equation  \rf{PDE}
such that  $|u|\lt M$ and $|\pl_x u| \lt M_1$ on $\ovl \F_T$,
and let $\G \sub \F$ be a strictly interior open domain.
Then,  there exist a number $\al\in (0,\beta\we \beta')$ and a constant $M_5$, both depending
 only on $M$, $M_1$, $\hat M$,  $\mc M$,
$\|\ffi_0\|_{\C^{2+\beta}(\ovl\F)}$, 
and the distance between $\ovl \G$ and $\pl \F$, such that $u$
is of class $\C^{1+\frac{\al}2, 2+\al}(\ovl\G_T)$, and
\aa{
\|u\|_{\C^{1+\frac{\al}2, 2+\al}(\ovl \G_T)} \lt M_5.
}
\end{thm}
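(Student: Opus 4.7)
The plan is to bootstrap the regularity of $u$ in two stages: first establish Hölder continuity of $u$ itself, then use assumption (ii) to transfer this into Hölder continuity of $\tet_u$ viewed as a function of $(t,x)$ alone, and finally freeze $\tet_u$ in the coefficients and invoke an interior $\C^{1+\al/2,2+\al}$-estimate from \cite{lady} on the strictly interior subdomain $\G$. The reason an interior estimate is needed, rather than Theorem \ref{h-n-est-th}, is precisely to avoid relying on the time-derivative bound $M_2$, which depends on $\la_n(\F)$ and on boundary regularity.

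First I would apply Theorem \ref{hn1} to obtain exponents $\al_0\in(0,\beta)$ and a constant $M_4$, depending only on the quantities listed there, such that $\|u\|_{\C^{\al_0/2,\al_0}(\ovl \F_T)}\lt M_4$. Choosing $\beta'\in(0,\al_0\we\beta)$, assumption (ii) then yields bounds on $[\tet_u]^x_{\beta'}$ and $[\tet_u]^t_{\beta'/2}$ determined only by $M_1$ and the already-controlled $[u]^t_{\beta'/2}$, hence ultimately by $M$, $M_1$, $\hat M$, $\mc M$, and $\|\ffi_0\|_{\C^{2+\beta}(\ovl\F)}$. At this point $\tet_u$ is a known $(t,x)$-function of parabolic Hölder class $\C^{\beta'/2,\beta'}(\ovl\F_T)$ with controlled norm.

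Next I would freeze $\tet_u$ in \rf{PDE}, so that $u$ is a $\C^{1,2}(\ovl \F_T)$-solution of
\aaa{
\lb{PDE-hn2-frozen}
- \sum_{i,j=1}^{n} a_{ij}(t,x,v)\pl^2_{x_ix_j}v + \sum_{i=1}^n \td a_i(t,x,v,\pl_x v)\pl_{x_i}v + \td a(t,x,v,\pl_x v) + \pl_t v = 0,
}
where $\td a_i(t,x,v,p)=a_i(t,x,v,p,\tet_u(t,x))$ and $\td a(t,x,v,p)=a(t,x,v,p,\tet_u(t,x))$. By the local Lipschitz continuity of $a_i$ and $a$ in $w$ (assumption (i)) composed with the Hölder continuity of $\tet_u$ in $(t,x)$ from the previous step, together with the Hölder continuity of $a_i,a$ in $(t,x,u,p)$, the coefficients $\td a_i$ and $\td a$ are $(\beta\we\beta')$-Hölder in $x$, $(\beta\we\beta')/2$-Hölder in $t$, and Hölder/Lipschitz in $(v,p)$ on $\mc R_2$, with all constants bounded by a function of $M$, $M_1$, $\hat M$, and $\mc M$. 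Uniform parabolicity and the structural bounds (A1), (A5), (A6) are inherited unchanged.

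Finally I would apply the interior $\C^{1+\al/2,2+\al}$-estimate for quasilinear parabolic systems from \cite{lady} to \rf{PDE-hn2-frozen} on any strictly interior cylinder $\ovl\G_T \sub \F_T$. That estimate, being purely interior, dispenses with regularity of $\pl\F$ and of the boundary data, but the constant it produces depends on the distance from $\ovl\G$ to $\pl\F$. It yields an exponent $\al\in(0,\beta\we\beta')$ and a bound $M_6$ for $\|u\|_{\C^{1+\al/2,2+\al}(\ovl\G_T)}$ depending only on the structural constants of the frozen equation and on $\|\ffi_0\|_{\C^{2+\beta}(\ovl\F)}$, hence exactly on the quantities listed in the statement. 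The main obstacle I anticipate is verifying cleanly that the composition $w\mto\td a(t,x,v,p)=a(t,x,v,p,\tet_u(t,x))$ inherits jointly Hölder regularity in $(t,x,v,p)$ from (i) and (ii) with explicit control of all Hölder constants — essentially a careful composition argument using the positively homogeneous Gâteaux derivative in $w$ noted after Remark \ref{rem8} — rather than the invocation of the interior estimate itself, which is then routine.
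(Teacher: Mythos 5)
Your proposal is correct and follows essentially the same route as the paper's proof: first invoke Theorem \ref{hn1} to bound the $\C^{\al_0/2,\al_0}(\ovl\F_T)$-norm of $u$, use assumption (ii) to control the parabolic H\"older norm of $\tet_u$ (the paper takes $\al=\min(\beta',\al_0)$ rather than phrasing this as ``choosing'' $\beta'$, but the effect is identical), then freeze $\tet_u$ in $a_i$, $a$ and apply the interior H\"older estimate (Theorem 5.1, p.~586) from \cite{lady} to the resulting quasilinear system. You also correctly identify the reason for working with interior estimates --- avoiding the dependence of $M_2$ on $\la_n(\F)$ and boundary regularity --- which the paper states as the motivation just before Theorem \ref{hn1}.
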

\begin{proof}
Freeze the function $\tet_u$ in the coefficients $a_i$, and $a$, and consider 
PDE \rf{PDE-lady1}  with respect to $v$.
 Let $\al$ be the smallest of $\beta'$ and the exponent
 whose existence was established by Theorem \ref{hn1}.
Assumptions (i) and (ii) imply that the coefficient $\td a$ in PDE \rf{PDE-lady1} is H\"older continuous
in $t,x,u$, and $p$ with exponents $\frac\al2, \al, \al$, and $\al$, respectively. 
Moreover, the H\"older constants
are bounded and their common bound depends on $\mc M$, $M_1$, and $M_4$.
The constant $M_4$, in turn, depends on 
$M$, $M_1$, $\hat M$, $\beta$, and $\sup_{\ovl \F}\|\ffi_0\|_{\C^{2+\beta}(\ovl \F)}$.
Thus, by Theorem 5.1 of \cite{lady} (p. 586),  the solution $u$ is of class 
$\C^{1+\frac{\al}2, 2+\al}(\ovl\G_T)$ and the bound for the norm 
$\|u\|_{\C^{1+\frac{\al}2, 2+\al}([0,T]\x \ovl \G)}$
depends only on $M$, $M_1$, $\hat M$, $\mc M$, $\sup_{\ovl \F}\|\ffi_0\|_{\C^{2+\al}(\ovl \F)}$, and the distance 
between $\ovl \G$ and $(\pl \F)_T$.
The theorem is proved.
\end{proof}

\subsection{Existence and uniqueness for the initial--boundary value problem}
To obtain the existence and uniqueness result for problem \rf{PDE}--\rf{initialsystem}, 
we need the two additional assumptions below:
\bi
\item[\bf (A11)] The following compatibility condition 
  holds for $x\in \pl \F$:
\mm{ 
- \sum_{i,j=1}^n  a_{ij} (0,x,0)\pl^2_{x_i x_j} \ffi_0(x) +\sum_{i=1}^n   a_i (0,x,0,\pl_x \ffi_0(x), \tet_{\ffi_0}(0,x)) \pl_{x_i} \ffi_0(x)\\
 + a(0,x, 0, \pl_x \ffi_0(x), \tet_{\ffi_0}(0,x)) =0. 
  }
\item[\bf (A12)] For any $u, u' \in \C^{1,2}_0(\ovl \F_T)$, it holds that 
\aaa{
 \hspace{1.2cm}   
 \lb{tdtet}
 \tet_{u}(t,x) - \tet_{u'}(t,x)  = \td \tet_{u-u'}(t,x) +  \sig_{u,u',u_x, u'_x}(t,x) (u(t,x)-u'(t,x)),
}
 where $\sig_{u,u',u_x, u'_x}: \ovl \F_T \to \mc L(\Rnu^m,E)$ is bounded and may depend
non-locally on $u$, $u'$, $u_x$, and $u'_x$;
$\td \tet_v: \ovl \F_T \to E$ is defined for each $v\in \C^{1,2}_0(\ovl \F_T)$
and satisfies (A2) (in the place of $\tet_u$).
\ei
The main tool in the proof of  existence for initial--boundary value problem
\rf{PDE}-\rf{initialsystem}
is the following version of the Leray-Schauder theorem proved in \cite{ls} (Theorem 11.6, p. 286).
First, we recall that a map is called \textit{completely continuous} if it takes bounded sets into relatively compact sets. 

 \begin{thm}(Leray-Schauder theorem)
 \lb{leray}
 \label{lerayschauder}
Let $X$ be a Banach space, and let $\Phi$ be a completely continuous map
$[0,1]  \times X \rightarrow X$ such that for all $x\in X$, $\Phi(0,x)= c \in X$. Assume there exists a constant $K>0$ such that
 for all $(\tau, x) \in [0,1]  \times X $ solving the equation
  $\Phi(\tau,x) = x$, it holds that $\|x\|_X < K$. Then, the map $\Phi_1(x) = \Phi(1,x)$
has  a fixed point.
\end{thm}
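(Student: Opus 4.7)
\medskip

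The plan is to prove this via the Leray--Schauder topological degree, since the statement is exactly the kind of assertion that degree theory was designed to produce. The a priori bound $K$ on the solution set of the homotopy equation gives homotopy invariance of the degree on a fixed ball, and the constant map at $\tau = 0$ pins down the degree's value.

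More concretely, I would first fix $R > K$ and consider the open ball $B_R = \{x \in X : \|x\|_X < R\}$. Writing $F_\tau(x) := \Phi(\tau, x)$, the map $(\tau,x) \mapsto F_\tau(x)$ is a completely continuous homotopy on $[0,1] \times \overline{B_R}$. The hypothesis tells us that every fixed point of $F_\tau$ lies in $B_K \subset B_R$, so in particular $0 \notin (I - F_\tau)(\partial B_R)$ for every $\tau \in [0,1]$. This is precisely the admissibility condition needed for the Leray--Schauder degree $\deg_{\text{LS}}(I - F_\tau, B_R, 0)$ to be defined and for its homotopy invariance to apply, giving
\begin{equation*}
\deg_{\text{LS}}(I - F_0, B_R, 0) = \deg_{\text{LS}}(I - F_1, B_R, 0).
\end{equation*}

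At $\tau = 0$, $F_0 \equiv c$. The fixed point equation $x = F_0(x) = c$ has the unique solution $x = c$, and by hypothesis $\|c\|_X < K < R$. Since $I - F_0$ is a compact perturbation of the identity and its unique zero in $B_R$ is the nondegenerate point $c$, the normalization property of the Leray--Schauder degree gives $\deg_{\text{LS}}(I - F_0, B_R, 0) = 1$. Consequently $\deg_{\text{LS}}(I - F_1, B_R, 0) = 1 \neq 0$, and the solution property of the degree forces the existence of $x_* \in B_R$ with $\Phi_1(x_*) = x_*$.

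The main technical content, and the step I would expect to occupy most of the work, is the construction of the Leray--Schauder degree itself for completely continuous perturbations of the identity on a Banach space, together with the verification of its three defining properties (normalization, additivity, homotopy invariance). The standard route is to approximate $F_\tau$ uniformly on $\overline{B_R}$ by finite-rank compact maps, apply the Brouwer degree in the finite-dimensional range, and show that the resulting integer is independent of the approximation. An alternative avenue, avoiding degree theory, is to combine Schauder's fixed point theorem with a retraction onto $\overline{B_R}$ and a contradiction argument using the a priori bound; this is shorter but less flexible and does not extract the full strength of the hypothesis as cleanly.
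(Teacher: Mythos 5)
Your argument is mathematically sound, but it takes an entirely different route from the paper. The paper does not prove this theorem at all: it is stated as a result quoted from the literature (Gilbarg--Trudinger \cite{ls}, Theorem 11.6), and the only genuine content supplied in the paper is the short remark immediately afterward, which observes that the cited result is stated there for $c=0$ and reduces the general case to it by the translation $\td\Phi(\tau,x)=\Phi(\tau,x+c)-c$. That reduction is checked in three lines: $\td\Phi$ is completely continuous iff $\Phi$ is (because a bounded set $B$ corresponds to the bounded set $B'=\{(\tau,x+c):(\tau,x)\in B\}$ with $\td\Phi(B)=\Phi(B')-c$), $\td\Phi(0,\fdot)\equiv 0$, and $x$ is a fixed point of $\td\Phi(\tau,\fdot)$ iff $x+c$ is a fixed point of $\Phi(\tau,\fdot)$, which preserves the a priori bound up to the harmless shift by $\|c\|_X$. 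Your proposal instead rebuilds the standard Leray--Schauder degree machinery (admissibility from the a priori bound, homotopy invariance between $\tau=0$ and $\tau=1$, normalization at the constant map $c$). The steps are all correct — in particular your observation that $\|c\|_X<K$ because $c$ itself solves $\Phi(0,x)=x$ is exactly the point needed — but you are reproving a textbook theorem rather than matching the paper's economy, where the whole issue is delegated to the reference and only the affine change of variable is original. For the purposes of this paper, the translation trick is the proof; the degree-theoretic construction you outline would be an unnecessary detour.
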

\begin{rem}
\rm
Theorem 11.6 in \cite{ls} is, in fact, proved for the case $c=0$. However, let us observe that the assumptions of Theorem 11.6
are fulfilled for the map $\td \Phi(\tau,x) = \Phi(\tau,x+c) - c$, whenever $\Phi$ satisfies the assumptions of Theorem \ref{lerayschauder}.
To see this, we first check that $\td \Phi$ is completely continuous.
Let $B\sub [0,1] \x X$ be a bounded set, then $B' = \{(\tau,x+c)\; \text{s.t.} \; (\tau,x) \in B\}$ is also a bounded 
set with the property
$\td \Phi(B) = \Phi(B') - c$. Therefore, $\td \Phi$ is completely continuous if and only if $\Phi$ is completely continuous. 
Next, it holds that $\td \Phi(0,x) = 0$ for all $x\in X$. It remains to note that
$x$ is a fixed point of the map $\td \Phi(\tau,\fdot)$ if and only if $x+c$ is a fixed point of the map $\Phi(\tau,\fdot)$.
\end{rem}
Now we are ready to prove the main result of Section \ref{s2} which is the
existence and uniqueness theorem for  non-local initial--boundary value problem
\rf{PDE}-\rf{initialsystem}.
\begin{thm}[Existence and uniqueness for initial--boundary value problem]
\lb{existence}
Let  (A1)--(A11) hold.   
Then,  there exists a $\C^{1+\frac\beta2,2+\beta}(\ovl \F_T)$-solution 
 to non-local initial--boundary value problem \rf{PDE}-\rf{initialsystem}. If, in addition, (A12) holds, then
this solution is unique.
\end{thm}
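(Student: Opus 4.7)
I would apply the Leray-Schauder theorem (Theorem \ref{leray}) in the Banach space $X = \C^{1+\frac{\alpha}2,2+\alpha}(\ovl \F_T)$ for some $\alpha \in (0,\beta)$. For $v \in X$ with $v|_{(\pl \F)_T} = 0$ and $\tau \in [0,1]$, I define $\Phi(\tau, v) = u$ as the unique solution of the linear (in $u$) parabolic problem obtained by freezing $v$ inside the nonlinear coefficients of \rf{PDE}:
\aa{
- \sum_{i,j=1}^n a_{ij}(t,x,v)\, \pl^2_{x_ix_j} u + \sum_{i=1}^n a_i(t,x,v,\pl_x v, \tet_v)\, \pl_{x_i} u + \tau\, a(t,x,v,\pl_x v,\tet_v) + \pl_t u = 0,
}
with initial-boundary data $u|_{\Gm_T} = \tau\psi$. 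Classical linear parabolic theory (Solonnikov/Ladyzhenskaya) yields a unique $\C^{1+\frac{\beta}2,2+\beta}(\ovl \F_T)$-solution, and the compactness of the embedding $\C^{1+\frac{\beta}2,2+\beta} \hookrightarrow \C^{1+\frac{\alpha}2,2+\alpha}$ for $\alpha<\beta$ makes $\Phi$ completely continuous. Note that $\Phi(0,v) \equiv 0$ since the $\tau=0$ problem has trivial source and data.

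The key point of the argument is an a priori bound $\|u\|_X < K$ valid simultaneously for all fixed points $u = \Phi(\tau,u)$, $\tau\in [0,1]$. Each such fixed point solves an instance of \rf{PDE}--\rf{initialsystem} with $\ffi_0$ replaced by $\tau\ffi_0$ and $a$ replaced by $\tau a$. One checks that assumptions (A1)--(A10) remain in force for this $\tau$-family, with the structural constants (the functions $\mu,\hat\mu,\td\mu,\eta,P,\eps$, the constants $c_1,c_2,c_3,L_E,\mc K,K_{\xi,\zeta},\hat L_E$, etc.) independent of $\tau$. Applying in succession the maximum principle (Theorem \ref{maxminsys}), the gradient estimate (Theorem \ref{grad-est-th}), the time-derivative estimate (Theorem \ref{ut-est}), and the H\"older norm estimate (Theorem \ref{h-n-est-th}) produces uniform bounds $M,M_1,M_2,M_3$ depending only on $\|\ffi_0\|_{\C^{2+\beta}(\ovl\F)}$ and the fixed structural data, hence independent of $\tau$. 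Condition (A11) guarantees that at $\tau=1$ the compatibility requirement on $\Gm_T$ holds, so the solution lies in $\C^{1+\frac{\beta}2,2+\beta}(\ovl \F_T)$ up to the boundary. Leray-Schauder then delivers a fixed point of $\Phi(1,\fdot)$, which is the desired solution.

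\textbf{Uniqueness.} Given two solutions $u_1,u_2$, set $u = u_1 - u_2$ and subtract the two equations. For each coefficient I write a first-order Taylor expansion along $\lambda \mapsto (u_2 + \lambda u, \pl_x u_2 + \lambda \pl_x u, \tet_{u_2} + \lambda(\tet_{u_1}-\tet_{u_2}))$, producing integrated derivatives $\int_0^1 \pl_u a_{ij}\,d\lambda$, $\int_0^1 \pl_p a_{i}\,d\lambda$, $\int_0^1 \pl_w a\,d\lambda$, and so on. Using (A12) to decompose $\tet_{u_1} - \tet_{u_2} = \td \tet_u + \sig_{u_1,u_2,\pl_x u_1,\pl_x u_2}\, u$ and invoking (A8) for the Lipschitz/H\"older bounds, I obtain a non-local linear-like parabolic PDE for $u$ of exactly the form \rf{pde-lin} in Lemma \ref{pde-lin-max}, with continuous bounded coefficients, zero right-hand side $f$, and zero initial-boundary data. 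Lemma \ref{pde-lin-max} then forces $u \equiv 0$ on $\ovl \F_T$.

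\textbf{Main obstacle.} The delicate point in the existence argument is making sure that the whole pipeline of a priori bounds (Theorems \ref{maxminsys}--\ref{h-n-est-th}) applies to the $\tau$-family \emph{with constants independent of $\tau$}; this requires carefully tracking how $\tau\ffi_0$ and $\tau a$ interact with assumption (A3) and the structural functions. In the uniqueness argument, the subtlety lies in organizing the Taylor expansion so that the G\^ateaux-differentiability of $a$ and $a_i$ in $w$ combined with (A12) yields coefficients in the precise class required by Lemma \ref{pde-lin-max}.
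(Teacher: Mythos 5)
Your overall strategy mirrors the paper's: a Leray--Schauder homotopy whose fixed points are solved through the a priori estimate pipeline (Theorems \ref{maxminsys}, \ref{grad-est-th}, \ref{ut-est}, \ref{h-n-est-th}), and uniqueness via Taylor-expanding the coefficients together with (A12) to produce a system of form \rf{pde-lin} that is then killed by Lemma \ref{pde-lin-max}. The uniqueness half is essentially identical to the paper's. Your choice of Banach space $\C^{1+\alpha/2,2+\alpha}$ (using compactness of the H\"older embedding) instead of the paper's $\C^{1,2}$ (using Arzel\`a--Ascoli through the explicit linear Schauder estimate) is a cosmetic variant and works just as well.

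The genuine gap is in the design of the homotopy. The paper's family \rf{cont-prob} is
\[
\pl_t u - \sum_{i,j}\bigl(\tau a_{ij}(t,x,u)+(1-\tau)\dl_{ij}\bigr)\pl^2_{x_ix_j}u + (1-\tau)\Delta\ffi_0 + \tau\sum_i a_i\,\pl_{x_i}u + \tau\,a = 0,
\qquad u(0,\cdot)=\ffi_0,
\]
with the initial datum $\ffi_0$ \emph{kept fixed} for every $\tau$; the artificial source $(1-\tau)\Delta\ffi_0$ is precisely what makes $\Phi(0,\cdot)\equiv\ffi_0$ and, more importantly, makes the first-order corner compatibility condition for the frozen linear problem reduce to exactly $\tau\times$\textup{(A11)} at $x\in\pl\F$, since $u(0,x)=\ffi_0(x)=0$ there and the $(1-\tau)\dl_{ij}\pl^2\ffi_0$ and $(1-\tau)\Delta\ffi_0$ terms cancel. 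In your version you instead scale the initial datum to $\tau\ffi_0$ and do not scale $a_i$. The resulting corner compatibility at a fixed point reads, after factoring $\tau$,
\[
\sum_{i,j}a_{ij}(0,x,0)\pl^2_{x_ix_j}\ffi_0(x) - \sum_i a_i\bigl(0,x,0,\tau\pl_x\ffi_0(x),\tet_{\tau\ffi_0}(0,x)\bigr)\pl_{x_i}\ffi_0(x) - a\bigl(0,x,0,\tau\pl_x\ffi_0(x),\tet_{\tau\ffi_0}(0,x)\bigr)=0,
\]
whose arguments $\tau\pl_x\ffi_0$ and $\tet_{\tau\ffi_0}$ differ from those in (A11) when $\tau<1$. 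Thus (A11) does \emph{not} supply the compatibility condition needed to invoke the linear $\C^{1+\beta/2,2+\beta}$-theory along your $\tau$-family, which is exactly what your compactness argument (embedding of $\C^{1+\beta/2,2+\beta}$ into $\C^{1+\alpha/2,2+\alpha}$) rests on. You would either have to strengthen (A11) to a $\tau$-parametrized family of compatibility conditions, or switch to the paper's homotopy, where the trick $(1-\tau)\Delta\ffi_0$ with unscaled $\ffi_0$ was designed precisely to avoid this.
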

\begin{proof}
\textit{Existence.}
For each $\tau\in [0,1]$, consider the initial--boundary value problem 
\eq{
\pl_t u - \sum_{i,j=1}^n (\tau a_{ij} (t,x,u)  + (1-\tau) \dl_{ij})\pl^2_{x_i x_j} u + (1-\tau)\Dl \ffi_0 \\
+ \tau  \sum_{i=1}^n a_i(t,x,u, \pl_x u, \tet_u)\pl_{x_i} u + \tau \, a(t,x,u, \pl_x u, \tet_u)=0,\\
u(0,x) =\ffi_0(x), \quad u(t,x)\big|_{(\pl \F)_T} = 0,
 \label{cont-prob}
}
where $u$, $u_x$, and $\tet_u$ are evaluated at $(t,x)$.
In the above equation, we freeze $u\in \C^{1,2}(\ovl \F_T)$ whenever it is in the arguments of the coefficients $a_{ij} (t,x,u)$, $a_i(t,x,u, \pl_x u, \tet_u)$,
$a(t,x,u, \pl_x u, \tet_u)$, and consider the following linear initial--boundary value problem with respect to $v$:
\eq{
\pl_t v^k - \sum_{i,j=1}^n \big( \tau a_{ij} (t,x,u)  + (1-\tau) \dl_{ij}\big)\pl^2_{x_i x_j} v^k + (1-\tau)\Delta \ffi^k_0 \\
+ \tau  \sum_{i=1}^n a_i(t,x,u, \pl_x u, \tet_u)\pl_{x_i} v^k + \tau \, a^k(t,x,u, \pl_x u, \tet_u)=0,\\
v^k(0,x) =\ffi^k_0(x), \quad v^k(t,x)\big|_{(\pl \F)_T} = 0,
 \label{linearprob}
}
where $v^k$, $\ffi^k_0$,  and $a^k$ are the $k$th components of $v$, $\ffi_0$, and $a$, respectively. 
Remark that the assumptions of Theorem 5.2, Chapter IV in \cite{lady} (p. 320)
on the existence and uniqueness of solution for linear parabolic PDEs are fulfilled for
problem \rf{linearprob}. Indeed, the assumptions of Theorem 5.2 in \cite{lady} require that the coefficients
of \rf{linearprob} are of class $\C^{\frac{\beta}2,\beta}(\ovl \F_T)$ for some $\beta\in (0,1)$.
This holds by (A8), (A9), and (A4).
The assumption about the boundary $\pl \F$ and the boundary function $\psi$ is fulfilled by (A4) and (A7).
Finally, the compatibility condition on the boundary $\pl \F$, required by Theorem 5.2, follows from (A11). 
Therefore, by Theorem 5.2 (p. 320) in \cite{lady}, we conclude that there exists a unique solution $v^k(t,x)$ to problem \rf{linearprob} 
which belongs to class $\C^{1+\frac{\beta}2, 2+ \beta}(\ovl \F_T)$. 
Clearly, $v^k$  is also of class  $\C^{1,2}(\ovl \F_T)$, and, therefore, for each $\tau \in [0,1]$,
we have the map $\Phi: \C^{1,2}(\ovl \F_T) \to \C^{1,2}(\ovl \F_T)$, 
$\Phi(\tau,u) = v$. 
Note that, fixed points of the map $\Phi(\tau,\fdot)$, if any, are solutions of \rf{cont-prob}. In particular,
fixed points of $\Phi(1,\fdot)$ are solutions to original problem \rf{PDE}-\rf{initialsystem}.

To prove the existence of fixed points of the map $\Phi(1,\fdot)$, we apply the Leray--Schauder theorem (Theorem \ref{leray}).
Let us verify  its conditions. First we note that if $\tau=0$, then the PDE in  \rf{linearprob} takes the form 
$\pl _t v^k - \lap v^k + \lap \ffi_0^k = 0$. 
Therefore, it holds that $\Phi(0, u) = \ffi_0$ for all $u\in \C^{1,2}(\ovl \F_T)$.
Let us prove that $\Phi$ is completely continuous. 
 Suppose $B \sub [0,1] \x \C^{1,2}(\ovl \F_T)$ is a bounded set, i.e., for all $(\tau, u)\in B$, it holds that $\|u\|_{\C^{1,2}(\ovl \F_T)}\lt \gm_B$ for some constant $\gm_B$ depending on $B$.
 By aforementioned Theorem 5.2 from \cite{lady} (p. 320), the solution $v_{\tau,u}(t,x) = \{v_{\tau,u}^k(t,x)\}_{k=1}^m$ 
 to problem \rf{linearprob}, corresponding to the pair $(\tau,u)\in B$,
satisfies the estimate
\aa{
\|v_{\tau,u}\|_{\C^{1+\frac{\beta}2, 2+ \beta}(\ovl \F_T)} \lt &\gm_1 \big(\|a(t,x,u(t,x),\pl_x u(t,x), \tet_u(t,x))\|_{\C^{\frac{\beta}2, \beta}(\ovl \F_T)}
+ \|\ffi_0\|_{\C^{2+\beta}(\ovl \F_T)}\big),
}
where the first term on the right-hand side is bounded by (A8), (A9), and by the boundedness of 
 $\|u\|_{\C^{1,2}(\ovl \F_T)}$ for all $(\tau,u)\in B$. 
 Moreover, the bound for this term depends only on $\gm_B$ and $\mc K$
(where $\mc K$ is the constant defined in Remark \ref{rem8}).
This implies that 
$\|v_{\tau, u}\|_{\C^{1+\frac{\beta}2, 2+ \beta}(\ovl \F_T)}$ is bounded by a constant that depends only on $\mc K$, $\gm_B$, $\gm_1$, and
$\|\ffi_0\|_{\C^{2+\beta}(\ovl \F_T)}$.  By the definition of the norm in $\C^{1+\frac{\beta}2, 2+ \beta}(\ovl \F_T)$ (see 
\rf{parabolic-holder-norm}), the family $v_{\tau,u}$, $(\tau,u)\in B$, is uniformly bounded and
uniformly continuous in $\C^{1,2}(\ovl \F_T)$.
By the Arzel\'a--Ascoli theorem, $\Phi(B)$ is relatively compact, and, therefore,
the map $\Phi$ is completely continuous.

It remains to prove that there exists a constant $K>0$ such that for each $\tau \in [0,1]$ and for each
$\C^{1,2}(\ovl \F_T)$-solution $u_\tau$ to problem \rf{cont-prob}, it holds that $\|u_\tau\|_{\C^{1,2}(\ovl \F_T)}\lt K$.
Remark that the coefficients of problem \rf{cont-prob} satisfy (A1)--(A10). Hence, 
by Theorem \ref{h-n-est-th}, the H\"older norm $\|u_\tau\|_{\C^{1+\frac{\beta}2,2+\beta}(\ovl \F_T)}$,
and, therefore, the $\C^{1,2}(\ovl \F_T)$-norm of $u_\tau$,
is bounded by a constant depending only on 
$M$, $\hat M$, $M_1$, $\mc K$, $K_{\xi,\zeta}$, $T$, $\la_n(\F)$, $\hat L_E$, $\|\ffi_0\|_{\C^{2+\beta}(\ovl\F)}$,   
and on the $\C^{2+\beta}$-norms of the functions defining the boundary $\pl \F$.

Thus, the conditions of Theorem \ref{leray} are fulfilled. This implies the existence of a fixed point of the map $\Phi(1,\fdot)$,
and, hence, the existence of a $\C^{1,2}(\ovl \F_T)$-solution to problem \rf{PDE}-\rf{initialsystem}.
Further, by Theorem \ref{h-n-est-th}, any $\C^{1,2}(\ovl \F_T)$-solution to problem \rf{PDE}-\rf{initialsystem}
is of class $\C^{1+\frac\beta2,2+\beta}(\ovl \F_T)$.

\textit{Uniqueness.} Let us prove the uniqueness under (A12).  Rewrite \rf{PDE} in divergence form
\rf{div-form}.
Suppose now  $u$ and $u'$ are two solutions to \rf{PDE}-\rf{initialsystem} of class $\C^{1,2}(\ovl \F_T)$. 
Define $v =u-u'$.  The PDE for the function $v$ takes form \rf{linearsys}
with
\eee{
\hat a_{ij}(t,x) = a_{ij} (t,x,u'(t,x));\\
A_i(t,x) = \sum_{j=1}^n u_{x_j}(t,x) \int_0^1 d\la \, \pl_u a_{ij}(t,x,\la u'(t,x) + (1-\la)u(t,x))^{\! \top};\\
A(t,x) =  \int_0^1 d\la \, \pl_u \hat a(t,x,\la u'(t,x) + (1-\la) u(t,x), u'_x(t,x), \tet_{u'}(t,x))\\
+ \int_0^1 d\la\, \pl_w \hat a(t, x,u(t,x),u_x(t,x),\la\tet_{u'}(t,x) + (1-\la)\tet_u(t,x)) \, \sig_{u,u',u_x, u'_x}(t,x); \\
 B_i(t,x) = \int_0^1 d\la \, \pl_{p_i} \hat a(t,x, u(t,x), \la u_x(t,x) + (1-\la) u_x(t,x), \tet_{u'}(t,x));\\
 C(t,x) = \int_0^1 d\la\, \pl_w \hat a(t,x,u(t,x),u_x(t,x),\la\tet_{u'}(t,x) + (1-\la)\tet_u(t,x)); \\
 f_i(t,x) = f(t,x) = u_0(x) = 0.
}
Above, $\sig_{u,u',u_x, u'_x}$
is a bounded function from representation \rf{tdtet}.
Remark that the above coefficients are bounded by a constant, say $\mc A$, depending  on $M$, $\hat M$, $M_1$,
$\mu(M,\hat M)$, and $\mc K$. By Corollary \ref{cor-zero}, $v=0$ on $\ovl\F_T$.
\end{proof}

\subsection{Existence and uniqueness for the Cauchy problem}
\label{s23}
In this subsection, we consider Cauchy problem \rf{PDE}--\rf{cauchysystem}. 
The results of the previous subsection 
will be used to prove the existence theorem for this problem.

Below, we formulate assumptions (A1')--(A12') needed for
the existence and uniqueness theorem. 
Assumptions (A1')--(A3') are the same as (A1)--(A3) but 
 $\F$ should be replaced with $\Rnu^n$, and
$\C^{1,2}_0(\ovl \F_T)$ with $\C^{1,2}_b([0,T]\x \Rnu^n)$. Also, $\tet_u$ is
defined for all $u\in \C^{1,2}_b([0,T]\x \Rnu^n)$.

As before, the functions $\mu(s)$, $\hat \mu(s)$, $\td \mu(s)$, $\eta(s,r)$, $P(s,r,t)$, $\eps(s,r)$ 
are continuous,  defined for positive arguments,
taking positive values, and non-decreasing (except $\hat\mu(s)$) with respect to each argument, 
whenever the other arguments are fixed; the
function $\hat \mu(s)$ is non-increasing. Further, $\mc {\td R}$, $\mc {\td R}_1$, $\mc {\td R}_2$, and $\mc {\td R}_3$ are 
defined as follows
\aa{
&\mc {\td R}  = [0,T]\x \Rnu^n \x \Rnu^m \x  \RR^{m \x n} \x E; \quad \mc {\td R}_1 = [0,T]\x \Rnu^n\x\Rnu^m;\\
&\mc {\td R}_2 = [0,T] \x \Rnu^n \x \{|u|\lt C_1\} \x \{|p|\lt C_2\} \x \{\|w\|_E \lt C_3\}; \\
& \mc {\td R}_3 = [0,T] \x \{|x|\lt C_1\} \x \{|u|\lt C_2\} \x \{|p|\lt C_3\} \x \{\|w\|_E \lt C_4\},
}
where $C_1$, $C_2$, $C_3$, $C_4>0$ are arbitrary constants.  
Assumptions (A4')--(A12') read:
\bi
\item[\textbf{(A4}\textrm{'}\textbf{)}] The initial condition $\ffi_0:\Rnu^n \to \Rnu^m$ is of class $\C_b^{2+\beta}(\Rnu^n)$, $\beta\in (0,1)$.
\item[\textbf{(A5}\textrm{'}\textbf{)}]  For all $(s,x,u,p,w)\in \mc {\td R} $, 
   \aa{
   & |a_i (t,x,u,p,w)| \leq \eta(|u|, \|w\|_E)(1+|p|), \\
   & |a(s,x,u,p,w)| \lt \big(\eps(|u|, \|w\|_E) + P(|u|, \|w\|_E, |p|)\big)(1+|p|)^2,
  }
 where $\lim_{r\to \infty}P(s,r,q) = 0$ and $2(s+1) \eps(s,r) \lt \hat\mu(s)$.
 \item[\textbf{(A6}\textrm{'}\textbf{)}] 
 $\pl_x a_{ij}$, $\pl_u a_{ij}$, $\pl_t a_{ij}$, $\pl^2_{uu} a_{ij}$, 
 $\pl^2_{ux} a_{ij}$, $\pl^2_{xt} a_{ij}$, and $\pl^2_{ut} a_{ij}$ exist and  
 are continuous on $\mc {\td R}_1$; moreover, 
$\max \big\{\big| \pl_x a_{ij}(t,x,u)\big| , \big|\pl_u a_{ij}(t,x,u)\big| \big\}\lt \td \mu(|u|)$.
 \item[\textbf{(A7}\textrm{'}\textbf{)}] 
 $\pl_t a$, $\pl_u a$, $\pl_p a$, $\pl_w a$, $\pl_t a_i$, $\pl_u a_i$, $\pl_p a_i$, and $\pl_w a_i$
exist and are  bounded and continuous on regions of form $\mc {\td R}_2$; $a$ and $a_i$ are  
$\beta$-H\"older continuous in $x$ and locally Lipschitz in 
$w$ with the H\"older and Lipschitz constants  bounded in regions of form $\mc {\td R}_2$.
 \item[\textbf{(A8}\textrm{'}\textbf{)}] The same as (A9) but valid for any bounded domain  $\F$.
 \item[\textbf{(A9}\textrm{'}\textbf{)}] The same as (A10) but valid for any bounded domain  $\F$.
 \item[\textbf{(A10}\textrm{'}\textbf{)}]
For any bounded domain $\F\sub \Rnu^n$, for some $\al\in (0,\beta)$,
the bound for $[\tet_u]^t_{\frac{\al}2}$ on $\ovl\F_T$  
is determined only by the bounds for $[u]^t_\al$ and $\pl_x u$, and the bound for 
$[\tet_u]^x_\al$ is determined only
by the bound for $\pl_x u$.

\item[\textbf{(A11}\textrm{'}\textbf{)}]
For all $u, u' \in \C^{1,2}_b([0,T]\x \Rnu^n)$,  representation \rf{tdtet} holds with
$\td \tet_v: [0,T]\x \Rnu^n \to E$, defined for each $v\in \C^{1,2}_b(\Rnu^n\x [0,T])$ and 
satisfying the inequality
$\sup_{[0,t]\x \Rnu^n}\|\td\tet_v\|_E \lt L_E \sup_{[0,t]\x \Rnu^n} |v|$ for all $t\in (0,T]$;
$\sig_{u,u',u_x, u'_x}(t,x)$ in \rf{tdtet} are bounded, continuous, 
 and $\beta$-H\"older continuous in $x$.
\item[\textbf{(A12}\textrm{'}\textbf{)}] 
$\pl^2_{xx} a_{ij}$, $\pl^2_{xu} a_{ij}$, $\pl^2_{uu} a_{ij}$, $\pl_t a_{ij}$,
 $\pl_x a_i$,  $\pl_u a_i$,  $\pl_p a_i$, $\pl_w a_i$, $\pl_u a$, $\pl_p a$, $\pl_w a$,  
$\pl^2_{p x} a$,  $\pl^2_{p u} a$, $\pl^2_{p p} a$,  $\pl^2_{p w} a$, $\pl^2_{p x} a_i$, $\pl^2_{p u} a_i$,   
$\pl^2_{p p} a_i$, $\pl^2_{p w} a_i$  
exist and are 
 bounded and continuous on regions of form $\mc {\td R}_2$, and, moreover, $\al$-H\"older continuous 
 in $x$, $u$, $p$, $w$ for some $\al\in (0,1)$; $\pl_p a$ and $\pl_p a_i$ are locally Lipschitz in $w$.
Furthermore, all the Lipschitz  constants are bounded over regions of form $\mc {\td R}_2$, 
 and all the H\"older  constants are bounded over regions of form $\mc {\td R}_3$.
\ei
Assumptions (A11')--(A12') are required only for the proof of uniqueness. Unlike initial--boundary value problems, 
we do not prove a maximum principle for Cauchy problems. 
 The uniqueness result for problem  \rf{PDE}-\rf{cauchysystem} follows 
 from the possibility to solve linear parabolic systems via fundamental solutions.

 Theorem \ref{existence-cauchy} below is one of our main results.
\begin{thm}[Existence and uniqueness for the Cauchy problem]
\lb{existence-cauchy}
Let  (A1')--(A10') hold.   
Then,  there exists a $\C^{1,2}_b([0,T]\x\Rnu^n)$-solution 
 to non-local Cauchy problem \rf{PDE}-\rf{cauchysystem} which, moreover, belongs to class
 $\C^{1+\frac{\al}2,2+\al}_b([0,T]\x\Rnu^n)$
 for some $\al \in (0,\beta)$.
 If, additionally, (A11') and (A12') hold, then this solution is unique.
\end{thm}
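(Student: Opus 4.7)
The plan is to prove existence by a diagonal exhaustion argument that reduces the Cauchy problem to a sequence of initial-boundary value problems solved via Theorem \ref{existence}, and to prove uniqueness by linearizing the difference of two solutions and applying the fundamental solution of Friedman \cite{friedman} combined with Gronwall's inequality.

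\textbf{Existence.} First I would exhaust $\Rnu^n$ by balls $\F_k=B(0,k)$, whose boundaries are of class $\C^{2+\beta}$, and introduce modified initial conditions $\ffi_0^k$ equal to $\ffi_0$ on $B_{k-1}$, vanishing on $\pl\F_k$, obtained by multiplying by a smooth cutoff; if compatibility condition (A11) does not hold verbatim, an additional perturbation of $\ffi_0^k$ in a thin collar of $\pl\F_k$ fixes it without affecting the values on any fixed compact. Theorem \ref{existence} then produces a solution $u_k\in\C^{1+\beta/2,2+\beta}(\ovl{(\F_k)_T})$ of the truncated problem on $(\F_k)_T$ vanishing on the lateral boundary.

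\textbf{Compactness and passage to the limit.} The maximum principle of Theorem \ref{max-princ} and the gradient estimate of Theorem \ref{grad-est-th}, under (A1')--(A7'), yield bounds $\sup|u_k|\leq M$ and $\sup|\pl_x u_k|\leq M_1$ that are \emph{uniform in $k$}, because these bounds depend only on the global data of the Cauchy problem and not on the diameter of $\F_k$. By (A10'), $\tet_{u_k}$ inherits a H\"older modulus on any compact set, uniformly in $k$. For each ball $G\Subset\Rnu^n$, choosing $k$ large so that $G\Subset\F_{k-1}$, the interior H\"older bound of Theorem \ref{hn2} gives
\[
\|u_k\|_{\C^{1+\alpha/2,2+\alpha}(\ovl G_T)}\leq M_6(G)
\]
for some $\alpha\in(0,\beta)$, independently of $k$. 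A Cantor diagonal extraction over an exhaustion by compacts, combined with Arzel\`a--Ascoli, produces a subsequence converging in $\C^{1,2}_{\rm loc}([0,T]\x\Rnu^n)$ to a limit $u\in\C^{1+\alpha/2,2+\alpha}_b([0,T]\x\Rnu^n)$. The delicate point in passing to the limit is the non-local term $\tet_{u_k}$: one needs continuity of $u\mapsto\tet_u$ with respect to $\C^1_{\rm loc}$-convergence on $\C^{1,2}_b$-bounded sets, which follows from (A8'). Since $\ffi_0^k\to\ffi_0$ locally uniformly, the limit $u$ satisfies both \rf{PDE} and \rf{cauchysystem}.

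\textbf{Uniqueness.} Let $u,u'$ be two $\C^{1,2}_b$-solutions with the same initial data and put $v=u-u'$. Subtracting \rf{PDE-lady2} for $u$ and for $u'$, expanding the coefficient differences by the mean-value theorem exactly as in \rf{coeff2}, and splitting the non-local dependence via representation (A11'), one obtains a linear non-local system on $[0,T]\x\Rnu^n$ of the form
\[
\pl_t v-\sum_{i,j=1}^n\td a_{ij}(t,x)\pl^2_{x_ix_j}v+\sum_{i=1}^n B_i(t,x)\pl_{x_i}v+A(t,x)v+C(t,x)(\td\tet_v)=0,\qquad v(0,\fdot)=0,
\]
whose coefficients are bounded and $\alpha$-H\"older in $x$ by (A12'), with $\{\td a_{ij}\}$ uniformly parabolic. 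I would then invoke the fundamental solution $\Gamma(t,x;s,y)$ of the principal part, constructed in Friedman \cite{friedman}, write the Duhamel representation for $v$ and for $\pl_x v$ (the latter by differentiating the kernel), use the standard Gaussian estimates on $\Gamma$, and bound the non-local term by $\|\td\tet_v\|_E\leq L_E\sup|v|$ from (A11'). This converts the representation into an integral inequality for $\Phi(t)=\sup_{[0,t]\x\Rnu^n}\bigl(|v|+\sqrt{t}\,|\pl_x v|\bigr)$; Gronwall's inequality then forces $\Phi\equiv 0$. The main obstacle throughout is this non-local term: it is precisely the reason one needs (A11') (to control $\td\tet_v$ linearly by $v$) and (A12') (to justify differentiating the Duhamel formula and obtain the requisite kernel estimates in a neighborhood of the diagonal).
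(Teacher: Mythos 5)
Your proposal follows essentially the same route as the paper: a diagonal exhaustion by balls with cutoff initial data, reduction to the initial-boundary value problem via Theorem \ref{existence}, uniform bounds from Theorems \ref{max-princ}, \ref{grad-est-th}, \ref{hn2}, and Arzel\`a--Ascoli for existence; and linearization of the difference plus fundamental-solution representation and Gronwall for uniqueness. The only real divergence is in the uniqueness step: the paper invokes Friedman's Theorems 3 and 6 to represent $v$ directly via the fundamental solution $G$ of the \emph{full} linear operator (with $B_i$ and $A$ absorbed), so the only source term is $C(\td\tet_v)$ and Gronwall closes on $\sup|v|$ alone; you take the kernel of the principal part only, which forces you to differentiate the Duhamel formula and run a Volterra-type estimate on $\sup(|v|+\sqrt{t}\,|\pl_x v|)$. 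Both decompositions work, but the paper's choice of kernel makes the closure a one-line Gronwall rather than a singular-kernel argument, and in particular avoids the question of justifying differentiation under the integral, which you would otherwise need (A12') to handle carefully.
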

\begin{proof}
\textit{Existence.}
We employ the diagonalization argument similar to the one presented in \cite{lady} (p. 493) 
for the case of one equation. 
Consider PDE \rf{PDE} on the ball $B_r$ of radius $r>1$  with the boundary function 
\aaa{
\lb{bf2}
\psi(t,x) =
\begin{cases} 
\ffi_0(x) \xi(x), \quad x\in \{t=0\} \times B_r, \\
0, \quad (t,x) \in  [0,T] \x \pl B_r,
\end{cases}
}
where $\xi(x)$ is a smooth function such that $\xi(x) =1$ if $x\in B_{r-1}$,
$\xi(x) = 0$ if $x\notin B_r$; further, $\xi(x)$ decays from $1$ to $0$ along the radius on $B_r\dd B_{r-1}$
in a way that $\xi^{(l)}(x)$, $l=1,2,3$, does not depend on $r$ and are zero on $\pl B_r$.
Let $u_r(t,x)$ be the 
$\C^{1+\frac\beta2,2+\beta}(\ovl B_{r+1})$-solution to problem \rf{PDE}-\rf{bf2} in the ball $B_{r+1}$ 
whose existence was established by Theorem \ref{existence}.
Remark, that since $u_r$ is zero on $\pl B_{r+1}$, it can be extended by zero to the entire space $\Rnu^n$,
and, therefore, $\tet_{u_r}$ is well-defined.
Moreover, by Theorem \ref{max-princ},
on $B_{r+1}$  the solution $u_r$ is bounded by a constant $M$ that only depends on
$T$, $L_E$, $\sup_{\Rnu^n} |\ffi_0|$, and the constants $c_1$, $c_2$, $c_3$ from (A3'). 
Next, by Theorem \ref{grad-est-th}, the  gradient $\pl_x u_r$ possesses a bound $M_1$ on $B_{r+1}$
which only depends on $M$, $\hat M$, $\sup_{\Rnu^n}|\pl_x \ffi_0|$, 
$\mu(M)$,  $\hat\mu(M)$, $\td\mu(M)$, $\eta(M,\hat M)$, $\sup_{q\gt 0} P(M,q,\hat M)$, and $\eps(M,\hat M)$.
Thus, both bounds $M$ and $M_1$ do not depend on $r$.

Remark that the partial derivatives and H\"older constants mentioned in assumption (A7')
are bounded in the region $[0,T]\x \Rnu^n \x \{|u|\lt M\} \x \{|p|\lt M_1\} \x \{\|w\|_E \lt \hat M$\}. 
Let $\mc K$
be their common bound. 

Fix a ball $B_R$ for some $R$. 
By Theorem \ref{hn2}, there exist $\al\in (0,\beta)$ and a constant $C>0$, both depend 
only on $M$, $M_1$, $\hat M$, $\mc K$, and $\|\ffi_0\|_{\C^{2+\beta}(\Rnu^n)}$, such that 
$\|u_r\|_{\C^{1+\frac{\al}2,2+\al}([0,T]\x \ovl B_r)} \lt C$ 
(remark  that the distance between $\ovl B_r$ and $\pl B_{r+1}$ equals one). Therefore,
$\|u_r\|_{\C^{1+\frac{\al}2,2+\al}([0,T]\x \ovl B_R)} \lt C$ for all $r>R$.
It is important to mention that the constant $C$ does not depend on $r$. By the Arzel\`a-Ascoli theorem, 
the family of functions $u_r(t,x)$, parametrized by $r$, 
is relatively compact in $\C^{1,2}([0,T]\x \ovl B_R)$.
Hence, the family $\{u_r\}$ contains a sequence
$\{u^{(0)}_{r_k}\}_{k=1}^\infty$ which converges in $\C^{1,2}([0,T]\x \ovl B_R)$.
Further, we can choose a subsequence $\{u^{(1)}_{r_k}\}_{k=1}^\infty$ of  $\{u^{(0)}_{r_k}\}_{k=1}^\infty$ 
with $r_k>R+1$ that converges in  $\C^{1,2}([0,T]\x \ovl B_{R+1})$.
Proceeding this way we find a subsequence $\{u^{(l)}_{r_k}\}$ with $r_k>R+l$ that converges in  
$\C^{1,2}([0,T]\x \ovl B_{R+l})$.
The diagonal sequence $\{u^{(k)}_{r_k}\}_{k=1}^\infty$ converges pointwise on 
$[0,T]\x \Rnu^n$ to a function $u(t,x)$,
while its derivatives $\pl_t u^{(k)}_{r_k}$, $\pl_x u^{(k)}_{r_k}$, and $\pl^2_{xx} u^{(k)}_{r_k}$
converge pointwise on $[0,T]\x \Rnu^n$ to the corresponding derivatives of $u(t,x)$. Therefore,
$u(t,x)$ is a $\C^{1,2}_b$-solution to problem \rf{PDE}-\rf{cauchysystem}.

Let us prove that $u\in \C_b^{1+\frac{\al}2,2+\al}([0,T]\x\Rnu^n)$. Note that $|u|\lt M$
and $|\pl_x u|\lt M_1$, where $M$ and $M_1$ are bounds for $|u_r|$ and, respectively $|\pl_x u_r|$,
that are independent of $r$. By Theorem \ref{hn2}, 
$\|u\|_{\C^{1+\frac{\al}2,2+\al}([0,T]\x \ovl B_R)} \lt C$, where the constants $\al$
and $C$ are the same as for $u_r$. Moreover, the above estimate
holds for any ball $B_R$.
Therefore, $\|u\|_{\C^{1+\frac{\al}2,2+\al}_b([0,T]\x \Rnu^n)} \lt C$.

\textit{Uniqueness.} 
Rewrite \rf{PDE} in the form
\aaa{
\lb{PDE-lady2} 
- \sum_{i,j=1}^{n} a_{ij} (t,x,u)\pl^2_{x_i x_j}u +  \td a(t,x,u, \pl_x u, \tet_u)  +  \pl_t u=0,
}
where $\td a(t,x,u,p, w) = a(t,x,u,p,w)+ \sum_{i=1}^n  a_i(t,x,u,p,w) p_i$ with $p_i$ being the $i$th column of the matrix $p$.
As before, $u$, $\pl_x u$, $\pl_t u$, and $\tet_u$ are evaluated at $(t,x)$. 

Suppose we have two $\C^{1,2}_b$-solutions $u$ and $u'$ to Cauchy problem
\rf{PDE-lady2}--\rf{cauchysystem}. Then $v=u-u'$ is a solution to 
\aa{
\begin{cases}
 \pl_t u - \suml_{i,j=1}^{n} \td a_{ij} (t,x)\pl^2_{x_i x_j}u + \suml_{i=1}^n B_i (t,x)\pl_{x_i}u
+ A(t,x)u + C(t,x)\big(\td \tet_u\big) = f(t,x), \\
u(0,x) = 0, \quad [0,T]\x \Rnu^n.
\end{cases}
}
with the coefficients 
\eee{
\td a_{ij}(t,x) = a_{ij} (t,x,u(t,x)),\\
A(t,x)  = - \sum_{i,j=1}^n \pl^2_{x_i x_j} u'(t,x) \int_0^1 d\la \, \pl_u a_{ij}(t,x,\la u'(t,x) + (1-\la)u(t,x))^\top   \\
\hspace{3mm} + \int_0^1 d\la\, \pl_u \td a(t,x, \la u'(t,x) + (1-\la)u(t,x), \pl_x u(t,x), \tet_u(t,x)) \\
\hspace{3mm} + \int_0^1 d\la  \, \pl_w \td a(t,x,u'(t,x), \pl_x u'(t,x), \la \tet_{u'}(t,x) + (1-\la) \tet_u(t,x))\, \sig_{u,u',u_x, u'_x}(t,x),\\
B_i(t,x) = \int_0^1 d\la\, \pl_{p_i} \td a(t,x,u'(t,x), \la\pl_x u'(t,x) + (1-\la)\pl_x u(t,x),  \tet_u(t,x)),\\
C(t,x) = \int_0^1 d\la  \, \pl_w \td a(t,x,u'(t,x), \pl_x u'(t,x), \la \tet_{u'}(t,x) + (1-\la) \tet_u(t,x)),\\
f(t,x) = 0,
}
where $\sig_{u,u',u_x, u'_x}$ is defined by decomposition \rf{tdtet}.
Assumptions (A1'), (A6'), (A7'), and (A10')--(A12') 
imply the conditions of Theorems 3 and 6 in \cite{friedman} (Chapter 9, pp.  256 and 260)
on the existence and uniqueness of solution to a system of linear parabolic PDEs via the fundamental
solution  $G(t,x;\tau, z)$. Namely, the forementioned Theorems 3 and 6  imply that
the function $v$ satisfies the equation
\aa{
v(t,x) = \int_0^t \int_{\Rnu^n} G(t,x;\tau, z)C(\tau,z)\big(\td\tet_v(\tau,z)\big)d\tau dz.
} Further, (A11') and (A12') imply the boundedness of $C(t,z)$ and $\td\tet_v(\tau,z)$. 
Finally, taking into account the
estimate  $\sup_{[0,t]\x \Rnu^n}\|\td \tet_v\|_E \lt L_E\sup_{[0,t]\x \Rnu^n}\sup |v|$, 
as well as Theorem 2 in \cite{friedman}
(Chapter 9, p. 251) which provides an estimate for the fundamental solution via a Gaussian-density-type function, 
by Gronwall's inequality, we  obtain that $v(t,x) = 0$.
Therefore, a $\C^{1,2}_b$-solution to \rf{PDE}-\rf{cauchysystem} is unique.
\end{proof}

\section{Fully-coupled FBSDEs with jumps}	
\label{s3}

In this section,  we
obtain an existence and uniqueness theorem for  FBSDEs with jumps by means of the 
results of Section \ref{s2}. 

Let $(\Omega, \mc F, \mc F_t, \PP)$  be a filtered probability space with
the augmented filtration $\mc F_t$ satisfying the usual conditions. 
Assume that the filtration $\mc F_t$ is generated by the following two 
mutually independent processes:
 an $n$-dimensional standard Brownian motion $B_t$ and a
 Poisson random measure $N(t,\fdot)$  on $\Rnu_+ \x \mf B(\Rnu^l_*)$,
where $\Rnu^l_* = \Rnu^l-\{0\}$ and $\mf B(\Rnu^l_*)$ is the $\sg$-algebra of Borel sets.
Further let
$\td N(t,A)=N(t,A) - t\nu(A)$ be the associated
compensated Poisson random measure on $\Rnu_+ \x \mf B(\Rnu^l_*)$, and $\nu(A)$ be its intensity which
is assumed to be a L\'evy measure.

Fix an arbitrary $T>0$ and consider FBSDE \rf{eqFBSDE}.
  { By a \textit{solution} to \rf{eqFBSDE} we understand  an $\mc F_t$-adapted quadruple 
 $(X_t,Y_t,Z_t, \td Z_t)$ with values in $\Rnu^n\x \Rnu^m \x \Rnu^{m\x n} \x L_2(\nu, \Rnu^l_* \to \Rnu^m)$
such that \rf{eqFBSDE} is fulfilled a.s. 
The latter includes the existence of all stochastic integrals involved in \rf{eqFBSDE}. In particular,
$\td Z_t$ is $\mc F_t$-predictable and such that  $\E\int_0^T \int_{\Rnu^l_*} |\td Z(t,y)|^2 \nu(dy) dt <+ \infty$ and,
furthermore,  $\E\int_0^T  |Z_t|^2 dt <+ \infty$. Also remark that we implicitly assume the existence
of the left limits for the pair $(X_t,Y_t)$, and, in fact, we will be interested in  c\`adl\`ag versions of $(X_t,Y_t)$,
so the aforementioned requirement would be automatically fulfilled.}

Together with FBSDE \rf{eqFBSDE}, we
consider the associated final value problem for the following partial integro-differential equation:
\eq{
\lb{eqPIDEFBSDE1}
\pl_x\te \big\{ f\big(t,x,{\theta},\pl_x\te\, \sg(t,x, \te), \tet_\te(t,x)\big)  
  -  \int_{\Rnu^l_*} {\ffi} (t,x,{\theta},y)\nu(dy)\big\} \\
   + \frac12 \tr \big(\pl^2_{xx}\te \,\sg(t,x,\te) \sg(t,x,\te)^\top\big)
+  g\big(t,x,{\theta},\pl_x\te\sg(t,x, \te), \tet_\te(t,x)  \big)  \\
+  \int_{\Rnu^l_*}  \tet_\te(t,x)(y) \nu(dy)  +\pl_t\te =0;
\hspace{1cm}
 \theta(T,x) = h(x).
}
In \rf{eqPIDEFBSDE1}, $x\in\Rnu^n$,  
and the equation is $\Rnu^m$-valued. Further,
$\te$,  $\pl_x\te$, $\pl_t\te$, and $\pl^2_{xx}\te$ are everywhere evaluated at $(t,x)$ 
(we omit the arguments $(t,x)$
to simplify the equation). As before, $\pl_x\te$ is understood
as a matrix whose $(ij)$th component is $\pl_{x_j}\te^i$, and the first term in \rf{eqPIDEFBSDE1}
is understood as the  multiplication of the matrix $\pl_x\te$ by the vector-valued function following after it. Furthermore,
$\tr(\pl^2_{xx}\te\,\sg(t,x,\te)\sg(t,x,\te)^\top)$ is the vector whose $i$th component is
 the trace of the matrix  $\pl^2_{xx}\te^i \sg\sg^\top$. Finally, for any $v\in \C_b([0,T]\x \Rnu^n)$, 
 we define the function
 \aaa{
 \lb{nl}
 \tet_v(t,x) = v(t,x+\ffi(t,x,v(t,x),\fdot)) - v(t,x).
 }
 By introducing the time-changed function $u(t,x) = \te(T-t,x)$, we transform problem \rf{eqPIDEFBSDE1} to the following
Cauchy problem:
\eq{
\lb{Cauchy}
 \pl_xu \big\{ \int_{\Rnu^l_*} {\hat \ffi} (t,x,u,y)\nu(dy) - \hat f(t,x,u,\pl_x  u\, \hat \sg(t,x, u), \tet_u (t,x))\big\}
\\
   - \frac12 \tr\big(\pl^2_{xx}u\,\hat\sg(t,x,u) \hat\sg(t,x,{u})^\top\big)
-  \hat g(t,x,u,\pl_xu\, \hat\sg(t,x, u), \tet_u (t,x))  \\
-  \int_{\Rnu^l_*}  \tet_u(t,x)(y) \nu(dy)    + \pl_t u= 0; 
\hspace{1cm}
 u(0,x) = h(x).
}
In \rf{Cauchy}, $\hat f(t,x,u,p,w) = f(T-t,x,u,p,w)$, and the functions $\hat\sg$, $\hat\ffi$, and $\hat g$
are defined via $\sg$, $\ffi$, and, respectively, $g$ in the similar manner. Furthermore, the function
$\tet_u$ is defined by \rf{nl} via the function $\hat \ffi$ (however, we use the same character $\tet$).

Let us observe that problem \rf{Cauchy} is, in fact, non-local Cauchy problem \rf{PDE}-\rf{cauchysystem}
if we define the coefficients $a_{ij}$, $a_i$, $a$, and the function $\tet_u$ by formulas \rf{coeff},  and assume that
the normed space $E$ is $L_2(\nu, {\Rnu^l_*} \to \Rnu^m)$. In other words, formulas \rf{coeff} embed the PIDE
in \rf{Cauchy} into the class of non-local PDEs considered in the previous section. 
Further, it will be shown that assumptions (B1)--(B8) below imply (A1')--(A12').

As before, $\mu(s)$, $\hat \mu(s)$, $\td \mu(s)$, $P(s,r,t)$, $\sig(r)$, and $\eps(s,r)$
are continuous functions,  defined for positive arguments,
taking positive values, and non-decreasing (except $\hat\mu(s)$) with respect to each argument, 
whenever the other arguments are fixed; the
function $\hat \mu(s)$ is non-increasing. Further, 
$\mc {\td R}$, $\mc {\td R}_1$, $\mc {\td R}_2$, and $\mc {\td R}_3$ 
 are regions defined as in the previous section with $E = L_2(\nu, {\Rnu^l_*} \to \Rnu^m)$:
\aa{
& \mc {\td R} = [0,T] \x \Rnu^n \x \Rnu^m \x \Rnu^{m \x n}\x L_2(\nu, {\Rnu^l_*} \to \Rnu^m); 
\quad  \mc {\td R}_1 = [0,T] \x \Rnu^n \x \Rnu^m;  
\\
& \mc {\td R}_2 =  [0,T] \x \Rnu^n \x \{|u|\lt C_1\} \x \{|p|\lt C_2\} \x \{\|w\|_\nu \lt C_3\}; \\
& \mc {\td R}_3 = \ovl \F_T \x \{|u|\lt C_1\} \x \{|p|\lt C_2\}\x \{\|w\|_\nu \lt C_3\},
}
where, $C_1$, $C_2$, $C_3$ are constants, and  $\|\cdot\|_\nu$ 
is the norm in $L_2(\nu, {\Rnu^l_*} \to \Rnu^m)$.

We assume:
\bi
\item[\bf (B1)] 
$\hat \mu(|u|)I \lt \sg(t,x,u)\sg(t,x,u)^\top \lt \mu(|u|)I$  for all $(t,x,u)\in \mc {\td R}_1$.
\item[\bf (B2)] $(t,x,u) \mto \ffi(t,x,u,\fdot)$ is a map $\mc {\td R}_1 \to L_2(\nu, Z \to \Rnu^n)$, 
where $Z\sub\Rnu^l$ is a common support of the $L_2$-functions $y\mto \ffi(t,x,u,y)$, 
which is assumed to be of finite $\nu$-measure. 
Further, $\pl_x\ffi$ and $\pl_u\ffi$ exist for $\nu$-almost each $y$;
$\pl_t\ffi$, $\pl^2_{ux}  \ffi$,
and $\pl^2_{uu}\ffi$ exist w.r.t. the  $L_2(\nu, Z \to \Rnu^n)$-norm.  Moreover, all the mentioned derivatives
 are bounded as maps  $\mc {\td R}_1 \to L_2(\nu, Z \to \Rnu^n)$.
\item[\bf (B3)]
There exist constants $c_1,c_2,c_3>0$
and a function $\zeta:  \mc {\td R} \x \Rnu^n \to (0,+\infty)$  such that for all $(t,x,u,p,w) \in  \mc {\td R}$,
$\zeta(t,x,u,p,w,0) = 0$  and
\aa{
\big(g(t,x,u,p,w), \, u\big) \lt c_1 + c_2 |u|^2 + c_3\|w\|_\nu^2 + \zeta(t,x,u,p,w,p^{_\top} u).
}
\item[\bf (B4)] The final condition $h:\Rnu^n \to \Rnu^m$ is of class $\C^{2+\beta}_b(\Rnu^n)$, $\beta\in (0,1)$.
\item[\bf (B5)] For all  $(t,x,u,p,w)\in \mc {\td R}$,
\aa{    
&\Big|\int_Z  \ffi (t,x,u,y)   \nu(dy)\Big| \lt \sig(|u|);  \quad 
  |f(t,x,u,p,w)|  \lt \eta(|u|, \|w\|_\nu)(1+|p|); \\
&|g(t,x,u,p,w)|  \lt \big(\eps(|u|, \|w\|_\nu) + P(|u|, |p|, \|w\|_\nu)\big)(1+|p|)^2,
}where $\lim_{r\to \infty} P(s,r,q) = 0$ and $4(1+s)(1+\mu(s)) \eps(s,r) < \hat\mu(s)$.
 \item[\bf (B6)] There exist continuous derivatives $\pl_x \sg$ and $\pl_u \sg$ such that
 \aa{
  \max \big\{\big| \pl_x \sg(t,x,u)\big| , \big|\pl_u \sg(t,x,u)\big| \big\}\lt \td \mu(|u|).
  }
 \item[\bf (B7)]  For any bounded domain $\F\sub\Rnu^n$ and for any $u\in \C^{0,1}_0(\ovl\F_T)$, 
 it holds that  (1) $D(t,x,y)>0$ for $(\la_{n+1} \ox \nu)$-almost all $(t,x,y) \in \ovl\F_T \x Z$  and (2)
 $\int_Z D^{-1} (t,x,y)\nu(dy) < \La$, where $\La$ is a constant depending on $u$ and $\F$ and   
$D(t,x,y) = |\det\{I + \pl_x \ffi(t,x,u(t,x),y) + \pl_u \ffi(t,x,u(t,x),y) \pl_x u(t,x)\}|$.
 \item[\bf (B8)]  
 The functions (a)   $\pl_t f$, $\pl_t g$,  $[g]^x_\beta$,  $\pl_t \sg$,  $\pl^2_{xt} \sg$, $\pl^2_{ut} \sg$, and 
(b) $\pl^2_{xx} \sg$, $\pl^2_{xu} \sg$, $\pl^2_{uu} \sg$, 
 $\pl_x f$,  $\pl_u f$,  $\pl_p f$, $\pl_w f$, $\pl_u g$, $\pl_p g$, $\pl_w g$,
$\pl^2_{p x} f$, $\pl^2_{p u} f$,  $\pl^2_{p p} f$,  $\pl^2_{p w} f$, $\pl^2_{p x} g$,  $\pl^2_{p u} g$, 
$\pl^2_{pp} g$, $\pl^2_{pw} g$ exist and are bounded and continuous 
 in regions of form $\mc {\td R}_2$; the derivatives of group (b) are
 $\al$-H\"older continuous in $x$, $u$, $p$, $w$ for some $\al\in (0,1)$, and  all the 
 H\"older  constants are bounded over regions of form $\mc {\td R}_3$.
Further, $f$, $g$, $\pl_p f$ and $\pl_p g$ are  locally Lipschitz in $w$, and
all the Lipschitz  constants are bounded over regions of form $\mc {\td R}_2$. 
 \ei
Theorem \ref{PIDEsolution} below is the existence and uniqueness result for final value problem
\rf{eqPIDEFBSDE1} which involves a PIDE. It can be regarded as a  particular case  of Theorem \ref{existence-cauchy}
and is the main tool to show the existence and uniqueness for FBSDEs with jumps.
In particular, it is shown that assumptions (A8')--(A12'), including decompositions \rf{repa10}, \rf{tdtet},
and inequality \rf{i-w}, are fulfilled when $\tet_\te$ is given by \rf{nl}.
 \begin{thm}
 \lb{PIDEsolution}
Let (B1)--(B8) hold.  Then, final value problem  \rf{eqPIDEFBSDE1}
has a unique $\C^{1,2}_b([0,T]\x \Rnu^n)$-solution. 
\end{thm}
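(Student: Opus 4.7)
The plan is to reduce Theorem \ref{PIDEsolution} directly to Theorem \ref{existence-cauchy} via the time-reversal $u(t,x)=\theta(T-t,x)$, which converts final value problem \eqref{eqPIDEFBSDE1} into Cauchy problem \eqref{Cauchy}. Formulas \eqref{coeff}, together with the choice $E=L_2(\nu,\mathbb{R}^l_*\to\mathbb{R}^m)$, identify \eqref{Cauchy} as a particular instance of non-local system \eqref{PDE}--\eqref{cauchysystem}. The entire statement therefore comes down to checking that (B1)--(B8) on the FBSDE data imply the abstract hypotheses (A1')--(A12') of Theorem \ref{existence-cauchy} for the coefficients defined by \eqref{coeff} and for
\[
\theta_u(t,x)=u\bigl(t,x+\hat\varphi(t,x,u(t,x),\cdot)\bigr)-u(t,x).
\]

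First I would dispatch the ``local'' hypotheses (A1')--(A7'), which are essentially routine bookkeeping: (A1') follows from (B1) by symmetrization of $\sigma\sigma^{\!\top}$; (A4') is (B4); (A5') follows from (B5), observing that the integral against $\nu$ in the definition of $a_i$ contributes $\sigma(|u|)\nu(Z)^{1/2}$ and that the replacement $p\mapsto p\,\hat\sigma(t,x,u)$ only alters the growth function $\eta$ by the factor $\mu(|u|)^{1/2}$, preserving the structural inequalities (the strict inequality $4(1+s)(1+\mu(s))\varepsilon<\hat\mu$ in (B5) is exactly what is needed to absorb this factor into the parabolicity condition of (A5')); (A6') and (A7') follow from (B2), (B6) and (B8) by the chain rule on \eqref{coeff}. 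Assumption (A2'), \emph{viz.} the bound $\sup\|e^{-\lambda t}\theta_u\|_E\leq L_E\sup|e^{-\lambda t}u|$, follows from the trivial estimate $\|\theta_u(t,x)\|_\nu\leq 2\sup|u|\cdot\nu(Z)^{1/2}$, and (A3') is an immediate reformulation of (B3) after the time reversal.

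The substantive work is in (A8')--(A11'), all of which concern the non-local operator $\theta_u$. For (A8') (= (A9)), I would compute $\partial_t\theta_u$ and $\partial_x\theta_u$ in the $L_2(\nu)$-sense by the chain rule, using the $C^{1,2}_b$-regularity of $u$ and the bounds on $\partial_t\hat\varphi$, $\partial_x\hat\varphi$, $\partial_u\hat\varphi$ from (B2); finiteness of $\nu(Z)$ makes the resulting expressions $L_2(\nu)$-valued. For (A9') (= (A10)), representation \eqref{repa10} comes from the telescoping
\[
\theta_u(t+\Delta t,x)-\theta_u(t,x)=\bigl[u(t',x+\hat\varphi')-u(t',x+\hat\varphi)\bigr]+\bigl[u(t',x+\hat\varphi)-u(t,x+\hat\varphi)\bigr]-\bigl[u(t',x)-u(t,x)\bigr],
\]
with $\hat\varphi=\hat\varphi(t,x,u(t,x),\cdot)$ and $\hat\varphi'=\hat\varphi(t',x,u(t',x),\cdot)$. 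Expanding $\hat\varphi'-\hat\varphi$ by a parameter integral isolates a term linear in $v=(\Delta t)^{-1}(u(t',\cdot)-u(t,\cdot))$, producing the $\zeta_{u,u_x}v$ and $\xi_{u,u_x}$ contributions, while the remaining two brackets yield $\hat\theta_v(t,x)=v(t,x+\hat\varphi)-v(t,x)$. The main obstacle of the whole proof is then the $L^4$-estimate \eqref{i-w} for this $\hat\theta_v$: the integral $\int|v(t,x+\hat\varphi(\cdot,\cdot,\cdot,y))|^4\,dt\,dx$ must be pulled back to an integral over $\mathbb{F}_\tau^\alpha(|v|^2)$. This is precisely where (B7) enters: the map $x\mapsto x+\hat\varphi(t,x,u(t,x),y)$ has Jacobian $D(t,x,y)$ with $D^{-1}\in L^1(\nu)$ uniformly in $(t,x)$, so a change of variables followed by Minkowski in $y$ gives the claimed bound, with the term $\alpha^2\lambda(\mathbb{F}_\tau^\alpha)$ absorbing the region where the shifted point lies outside the super-level set.

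Assumption (A10') on Hölder continuity of $\theta_u$ is obtained similarly by splitting $\theta_u(t,x)-\theta_u(t',x')$ into a $u$-difference and an argument-shift difference and bounding the latter via the mean value theorem using $M_1=\sup|\partial_x u|$ and the Lipschitz bounds of $\hat\varphi$ from (B2). Assumption (A11') is analogous, writing $\theta_u-\theta_{u'}$ as $\sigma_{u,u',u_x,u'_x}(u-u')+\tilde\theta_{u-u'}$ via a parameter integral along the segment joining $u'$ to $u$, the residual $\tilde\theta_v$ satisfying (A2) by the same change-of-variables argument. Finally, (A12') is a direct differentiation of \eqref{coeff} combined with (B8), (B2), (B6). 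Once all assumptions are in place, Theorem \ref{existence-cauchy} yields a unique $\mathrm{C}^{1,2}_b([0,T]\times\mathbb{R}^n)$-solution $u$, and $\theta(t,x)=u(T-t,x)$ is the required unique solution of \eqref{eqPIDEFBSDE1}.
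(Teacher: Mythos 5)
Your proposal matches the paper's proof essentially step for step: the same reduction of \eqref{eqPIDEFBSDE1} to the Cauchy problem \eqref{Cauchy} via time reversal, the same identification of the abstract data $(E,\theta_u,a_{ij},a_i,a)$ through \eqref{coeff}, and the same verification of (A1')--(A12') from (B1)--(B8), including the telescoping decomposition for (A9'), the change of variables driven by the Jacobian bound (B7) to establish the $L^4$-estimate \eqref{i-w}, and the parameter-integral splitting for (A11'). The only cosmetic difference is the bookkeeping in the $L^4$-estimate (the paper partitions the $y$-integral by whether $|\tilde v|\le|v|$ and applies Cauchy--Schwarz, while you invoke the $\alpha^2\lambda(\mathbb F^\alpha_\tau)$ slack term directly), but both routes are valid and equivalent.
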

\begin{proof}
Since problem \rf{eqPIDEFBSDE1} is equivalent to problem \rf{Cauchy}, it suffices to prove the existence and uniqueness for the latter.
As we already mentioned, introducing functions \rf{coeff}, letting the normed space $E$ 
be $L_2(\nu, {\Rnu^l_*} \to \Rnu^m)$,
and defining $\tet_u$ by \rf{nl}, we rewrite Cauchy problem \rf{Cauchy} in form  \rf{PDE}-\rf{cauchysystem}.

Let us prove that (A1')--(A12') are implied by (B1)--(B8). Indeed, (B1) implies (A1').
Next, we note that by (B2), the function $\ffi(t,x,u,\fdot)$ is supported in $Z$
and $\nu(Z)<\infty$.
This implies (A2') since
for any $\la\gt 0$ and for any $u\in \C_b([0,T]\x\Rnu^n)$,
\aa{
\|e^{-\la t}\tet_u(t,x)\|_\nu \lt  2\, \nu(Z) \, \sup_{[0,T]\x\Rnu^n} |e^{-\la t}u(t,x)|.
}
Further, (A3') follows from (B3) and \rf{coeff}, since for any $u\in\Rnu^m$,
 $\int_Z (w(y), u) \nu(dy) \lt \frac12 \|w\|_\nu^2 + \frac{\nu(Z)}2 |u|^2$.
Next, by (B5) and (B1),
\mm{
\big|\hat f(t,x,u, p\, \hat\sg(t,x, u), w)\big|
\lt \eta(|u|, \|w\|_\nu) \big(1+  |p|\, |\hat\sg(t,x, u)|\big) \\
\lt 
\eta(|u|, \|w\|_\nu)\, \big(1+\sqrt{\mu(|u|)}\big) (1+|p|)
}
which, together with the inequality for $\ffi$ in (B5), implies the first inequality in (A5').
The second inequality in (A5') follows, again, from (B5) and (B1) by virtue of the following estimates
\aa{
\big|\hat g (t,x,u, p\, \hat\sg(t,x, u), w)\big| \lt  &\, \big(\eps(|u|, \|w\|_{\nu}) + 
P\big(|u|, \|w\|_\nu, |p|\sqrt{\mu(|u|)}\big)\big)\big(1+|p|\sqrt{\mu(|u|)}\big)^2 \\
 \lt & \, \big(\td \eps(|u|, \|w\|_{\nu}) + \td P(|u|, \|w\|_\nu, |p|)\big) (1+|p|)^2, \\
\text{and} \quad
\Big|\int_Z w(y) \nu(dy)\Big| \lt &\, \hat P(\|w\|_\nu, |p|) (1+|p|)^2, 
}
where $\td\eps(s,r) = 2\eps(s,r)(1+\mu(s))$, $\td P(s,r,q)= 2P(s,r,p\sqrt{\mu(s)})(1+\mu(s))$, and 
$\hat P(s,r) =  \nu(Z)^\frac12 \, s \, (1+r)^{-2}$.
Further, (B6) and (B8) imply (A6'), (A7'), and (A12').
 Remark, that (A7') is implied, in particular, by the fact that the function
 $L_2(\nu, Z\to \Rnu^m) \to \Rnu^m$, $w\mto \int_Z w(y) \nu(dy)$ 
 is G\^ateaux-differentiable and Lipschitz.

 It remains to verify assumptions (A8')--(A11'). 
 Let us start with (A8'). First remark that if $u\in \C^{1,2}_0(\ovl \F_T)$, where $\F$ is
 a bounded domain, then it can be extended by $0$ outside of $\F$ defining a bounded
 continuous function on $[0,T]\x\Rnu^n$. Therefore, $\tet_u(t,x)$ is well-defined on 
 $[0,T]\x\Rnu^n$ for any function $u$ which is zero on $\pl \F$.
 Further, note that by (B2), 
 $\tet_u(t,x)$ takes values in  $L_2(\nu,Z\to \Rnu^m)$ for any $u\in \C^{1,2}_b([0,T]\x\Rnu^n)$. 
Furthermore, (B2) implies that $\pl_t \tet_u(t,x)$ and $\pl_x \tet_u(t,x)$ exist in $L_2(\nu,Z\to \Rnu^m)$
and are expressed via
 $\pl_t u$, $\pl_x u$, $\pl_t \ffi$, $\pl_x\ffi$, and $\pl_u \ffi$. Hence, (A8') is fulfilled.

Let us verify (A9').  Recall that (A9') is assumption (A10) from subsection \ref{ut-est} valid for any bounded domain $\F$.
Let $u\in \C^{0,1}_0(\ovl\F_T)$ and $v(t,x)=  (\Dl t)^{-1} (u(t',x) - u(t,x))$ with 
$t' = t+\Dl t$.
The immediate computation implies decomposition \rf{repa10} with
\eqq{
\hat \tet_v = v(t, x+\hat \ffi(t,x,u(t,x),\fdot)) - v(t,x), \\
 \zeta_{u,u_x}
= \int_0^1 d\la\, \pl_x u(t', x+ \la \Dl\hat\ffi)  \int_0^1 d\bar\la \,\pl_u\hat\ffi(t,x, \bar\la u(t',x) + (1-\bar\la) u(t,x),\fdot),\\
\xi_{u,u_x} =  \int_0^1 d\la\, \pl_x u(t', x+ \la \Dl\hat\ffi) \int_0^1 d\bar\la \,
\pl_t\hat\ffi(t+\bar\la\Dl t,x, u(t',x), \fdot),
} 
where $\Dl\hat\ffi = \hat\ffi(t',x,u(t',x),\fdot) - \hat\ffi(t,x,u(t,x),\fdot)$.
Further, inequality \rf{i-w} follows from (B8). Indeed, 
 define the functions $\Phi_{t,y}(x) =x+\hat \ffi(t,x,u(t,x),y)$  
 and $\td v(t,x,y) = v(t,\Phi_{t,y}(x))$ on $[0,T]\x\Rnu^n \x Z$.
By the definition (see (B8)), $D(t,x,y) = |\det \pl_x \Phi_{t,y}(x)|$.
We have
\mmm{
 \lb{above1}
\int_{\F^\al_\tau} \Big( \int_Z |\td v|^2(t,x,y) \nu(dy)\Big)^2  dt \, dx\lt  
\int_{\F^\al_\tau} \Big( \int_{\{y: |\td v|\lt |v|\}} |\td v|^2(t,x,y) \nu(dy)\Big)^2  dt \, dx   \\
  + \int_{\F^\al_\tau}\Big( 
  \int_{\{y: |\td v|> |v|\}} |\td v|^2(t,x,y) \sqrt{D(t,x,y)}\sqrt{ D^{-1}(t,x,y)}\,\nu(dy) \Big)^2  dt \, dx
  \\
 \lt \nu(Z)^2 \int_{\F^\al_\tau} |v|^4 dx\, dt  
+ \La \int_Z \nu(dy) \int_0^\tau dt \int_{\{x: |\td v|^2>\al; D>0\}} |\td v|^4 D(t,x,y) dx \\
\lt (\nu(Z)^2 + \La\nu(Z)) \int_{\F^\al_\tau} |v|^4 dt \, dx,
}
where $\La$ is the constant from (B8) depending on $\F$ and $u$. 
The second integral in the third line is estimated as follows.
 First, we remark that since $D(t,x,y)>0$, by Theorem 1.2 in \cite{katriel} (p. 190), the map  $\Phi_{t,y}:\Rnu^n \to \Rnu^n$
 is invertible. Therefore, we can transform this integral by the change of variable $x_1 = \Phi_{t,y}(x)$.
  Thus, inequality \rf{above1} implies \rf{i-w}, and (A9') is verified.

Further, (A10') is verified immediately by \rf{nl}. To verify (A11'), we note that decomposition  
\rf{tdtet} holds with
\eqq{
\td\tet_v =  v(t, x+\hat \ffi(t,x,u(t,x),\fdot)) - v(t,x),\\
\sig_{u,u_x, u', u'_x}
= \int_0^1 d\la\, \pl_x u'(t, x+ \la \dl\hat\ffi)  \int_0^1 d\bar\la \,\pl_u\hat\ffi(t,x, \bar\la u(t,x) + (1-\bar\la) u'(t,x),\fdot),
}
where $v=u-u'$ and $\dl\hat\ffi = \hat\ffi(t,x,u(t,x),\fdot) - \hat\ffi(t,x,u'(t,x),\fdot)$. 
By (B2), $\sig_{u,u_x, u', u'_x}$ is bounded, continuous, and has a bounded derivative in $x$.
This verifies (A11').

Thus, we conclude, by Theorem \ref{existence-cauchy}, that there exists a unique $\C^{1,2}_b([0,T]\x\Rnu^n)$-solution 
to problem \rf{Cauchy}.
\end{proof}
\begin{rem}
\rm  As in Theorem \ref{existence-cauchy}, assumptions (B1)--(B7) imply the existence
of a $\C^{1,2}_b$-solution to problem \rf{eqPIDEFBSDE1}, and (B8) is required only
for the proof of uniqueness.
\end{rem}
\begin{rem}
\rm
We formulate Theorem \ref{PIDEsolution} just as a result sufficient for the application to FBSDEs. However,
we note that, by Theorem \ref{existence-cauchy},  the 
$\C^{1,2}_b$-solution to problem  \rf{eqPIDEFBSDE1}
also belongs to class
$\C^{1+\frac{\al}2,2+\al}_b([0,T]\x\Rnu^n)$
 for some $\al \in (0,\beta)$.
\end{rem}

Before we prove our main result (Theorem \ref{mainThm} below), 
which is the existence and uniqueness theorem for FBSDE \rf{eqFBSDE},
we state a version of It\^o's formula (Lemma \ref{lem19}) used in the proof of Theorem \ref{mainThm}. 
We give the proof of the lemma since we do not know a reference for the time-dependent case.
\begin{lem}
\lb{lem19}
Let $X_t$ be an $\Rnu^n$-valued semimartingale with c\`adl\`ag paths of the form
\aa{
X_t = x + \int_0^t F_s ds + \int_0^t G_s dB_s + \int_0^t \int_Z \Phi_s(y)\td N(ds \,dy),
}
where the $d$-dimensional Brownian motion $B_t$ and the compensated Poisson random measure  $\td N$ are defined as above.
Further, let $Z\sub\Rnu^l_*$ be such that $\nu(Z)<\infty$, and 
$F_t$, $G_t$, and $\Phi_t$ be stochastic processes 
with values in $\Rnu^n$, $\Rnu^{n\x d}$, and $L_2(\nu, Z\to\Rnu^n)$, respectively.
Then, for a real-valued function $\phi(t,x)$ of class
$\C^{1,2}_b([0,T]\x \Rnu^n)$, a.s., it holds that
\aaa{
\phi(t,X_t) = \phi(0,x) + \int_0^t \pl_s \phi(s,X_s) ds + \int_0^t (\pl_x \phi(s,X_s),F_s)ds + \int_0^t (\pl_x \phi(s,X_s),G_sdB_s)\notag \\
+ \frac12\int_0^t \tr \big(\pl^2_{xx}\phi(s,X_s) \, G_s G_s^\top\big) ds
+ \int_0^t \int_Z \big[\phi\big(s,X_{s-}+ \Phi_s(y)\big) - \phi(s,X_{s-})\big] \td N(ds\, dy)\notag \\
+ \int_0^t \int_Z \big[\phi\big(s,X_{s-}+ \Phi_s(y)\big) - \phi(s,X_{s-}) - (\pl_x \phi(s,X_{s-}), \Phi_s(y))\big]\nu(dy)\, ds. 
\lb{ito}
}
\end{lem}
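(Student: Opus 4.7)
The plan is to reduce this to the classical (time-independent) Itô formula for jump semimartingales by incorporating time as an additional state coordinate. First, I would introduce the augmented semimartingale $\td X_t = (t, X_t) \in \Rnu^{n+1}$, which admits the decomposition
\[
\td X_t = (0,x) + \int_0^t (1, F_s)\,ds + \int_0^t \begin{pmatrix} 0 \\ G_s \end{pmatrix} dB_s + \int_0^t \int_Z \begin{pmatrix} 0 \\ \Phi_s(y) \end{pmatrix} \td N(ds,dy),
\]
and define $\Psi:\Rnu^{n+1}\to\Rnu$ by $\Psi(s,x)=\phi(s,x)$. Since $\phi\in \C^{1,2}_b$ and the time derivative is handled by the drift $(1,F_s)$, we have enough regularity to apply the standard Itô formula for semimartingales with jumps (as in Ikeda--Watanabe or Protter) to $\Psi(\td X_t)$. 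This produces continuous-quadratic-variation, drift, and pure-jump-sum contributions which I would then unpack back into the $(t,X_t)$ notation.

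Next, I would identify each piece. Since the time coordinate of $\td X$ has no Brownian or jump component, the continuous quadratic variation of $\td X$ involves only the block $G_sG_s^\top$ in the spatial part, giving the trace term $\tfrac12\int_0^t \tr\bigl(\pl^2_{xx}\phi(s,X_s)G_sG_s^\top\bigr)ds$, while the time drift contributes $\int_0^t \pl_s\phi(s,X_s)\,ds$. The spatial drift produces $\int_0^t (\pl_x\phi(s,X_s),F_s)\,ds$, and the Brownian integral yields $\int_0^t (\pl_x\phi(s,X_s),G_s\,dB_s)$. The sum over jumps is then recast as an integral against the jump measure: since $\nu(Z)<\infty$, jumps occur at finitely many times on $[0,T]$ a.s., and
\[
\sum_{0<s\leq t}\!\bigl[\phi(s,X_s)-\phi(s,X_{s-})-(\pl_x\phi(s,X_{s-}),\Delta X_s)\bigr] = \int_0^t\!\!\int_Z\!\bigl[\phi(s,X_{s-}+\Phi_s(y))-\phi(s,X_{s-})-(\pl_x\phi(s,X_{s-}),\Phi_s(y))\bigr]N(ds,dy).
\]
Splitting $N=\td N+\nu\,ds$ and combining the $\td N$-piece with $\int_0^t\!\!\int_Z (\pl_x\phi(s,X_{s-}),\Phi_s(y))\,\td N(ds,dy)$ (which arose by writing $dX_s$ in the Itô formula and splitting off its compensated-Poisson part) produces precisely the second-to-last term of \rf{ito}, while the $\nu\,ds$-piece is the final term. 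Replacing $X_{s-}$ by $X_s$ inside $ds$-integrals is harmless.

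The main technical point, and the reason this lemma needs care, is the justification of the standard jump-Itô formula and the absolute convergence of all the compensator integrals under our hypotheses. For this, I would use that $\phi\in\C^{1,2}_b$ (so the Taylor remainder satisfies $|\phi(x+h)-\phi(x)-(\pl_x\phi(x),h)|\leq \tfrac12\|\pl^2_{xx}\phi\|_\infty |h|^2$), combined with the finite-activity assumption $\nu(Z)<\infty$ and square-integrability of $\Phi_s(\cdot)$ in $L_2(\nu,Z\to\Rnu^n)$, to ensure $\int_0^t\!\!\int_Z |\Phi_s(y)|^2\,\nu(dy)\,ds<\infty$ a.s.\ and hence absolute convergence of the $\nu$-integral term. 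The $\td N$-stochastic integral is then well defined as an $L_2$-martingale by the standard isometry. Up to this verification, which is routine given the finite L\'evy mass of $Z$, the argument is just a bookkeeping exercise around the classical Itô formula applied in one higher dimension.
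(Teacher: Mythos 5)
Your approach is genuinely different from the paper's.  You absorb time into an augmented semimartingale $\tilde X_t=(t,X_t)$ and invoke the time\emph{-independent} jump It\^o formula for $\Psi(s,x)=\phi(s,x)$; the paper instead applies the time-independent It\^o formula at fixed time values and then handles the time dependence by a partition/telescoping argument (writing $\phi(t_{n+1},X_{t_{n+1}})-\phi(t_n,X_{t_n})$ as a time-shift term at frozen $X_{t_n}$ plus a space-shift term at frozen $t_{n+1}$, summing, and letting the mesh go to $0$).  Both routes lead to \rf{ito}, and your bookkeeping for the jump terms --- recasting the jump sum as an $N(ds\,dy)$ integral, splitting $N=\td N+\nu\,ds$, and merging the $\td N$ piece with $\int\int(\pl_x\phi(s,X_{s-}),\Phi_s(y))\,\td N$ coming from $dX_s$ --- is correct, as is the observation that replacing $X_{s-}$ by $X_s$ in $ds$-integrals is harmless.

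However, there is a genuine gap at the single step you gloss over.  The ``standard It\^o formula for semimartingales with jumps'' as in Protter's Theorem 33 (which the paper cites) requires the test function to be $\C^2$ in \emph{all} of its arguments, but $\Psi(s,x)=\phi(s,x)$ is only $\C^{1,2}_b$: one continuous derivative in $s$, two in $x$.  You say ``we have enough regularity \dots\ since the time derivative is handled by the drift $(1,F_s)$'', which is the right intuition (the time coordinate has no continuous quadratic variation and no jump component, so $\pl_s^2\Psi$ and $\pl_s\pl_x\Psi$ never actually enter the formula), but this observation does not by itself license applying a theorem whose hypotheses are not satisfied.  The paper is explicit that it could not find a reference for the time-dependent $\C^{1,2}$ case for jump semimartingales --- closing exactly this gap is the whole point of the lemma, and the telescoping argument is its mechanism.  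To make your version rigorous you would need either to (i) mollify $\phi$ in the time variable to get $\phi_\eps\in\C^{2,2}_b$, apply the classical formula to $\phi_\eps(\tilde X_t)$, observe that all terms containing $\pl_s^2\phi_\eps$ or $\pl_s\pl_x\phi_\eps$ vanish identically because the time component of $\tilde X$ is of bounded variation with no jumps, and pass to the limit $\eps\to0$ using uniform convergence of $\phi_\eps$, $\pl_s\phi_\eps$, $\pl_x\phi_\eps$, $\pl^2_{xx}\phi_\eps$ on compacts, or (ii) cite an explicit $\C^{1,2}$ It\^o formula for semimartingales with jumps.  As written, the step is not justified.
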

\begin{rem}
\rm
In the above lemma, we agree that $X_{0-} = X_0 = x$.
\end{rem}
\begin{proof}[Proof of Lemma \ref{lem19}]
Let us first assume that the function $\phi$ does not depend on $t$. Applying It\^o's formula (see Theorem 33 in \cite{protter}, p. 74),
we obtain
\mmm{
\lb{ito1}
\phi(X_t) - \phi(x) =   \int_0^t (\pl_x \phi(X_s),F_s)ds + \int_0^t  (\pl_x \phi(X_{s-}),dX_s) \\
+ \frac12\int_0^t \tr \big(\pl^2_{xx}\phi(X_s) \, G_s G_s^\top\big) ds
+ \sum_{0<s\lt t} \big(\phi(X_s) - \phi(X_{s-}) - (\pl_x\phi(X_{s-}),X_s - X_{s-})\big).
}
Note that the last summand in \rf{ito1} equals 
$\int_0^t \int_Z \big[\phi(X_{s-} + \Phi(s,y)) - \phi(X_{s-}) - (\pl_x\phi(X_{s-}),\Phi_s(y))\big] N(ds\, dy)$.
By the standard argument (see, e.g., \cite{appelbaum}, p. 256),
we obtain formula \rf{ito} without the term containing $\pl_s \phi(s,X_s)$.

Now take a partition of the interval $[0,t]$. Then, for each pair of successive points,
\mmm{
\lb{successive}
\phi(t_{n+1},X_{t_{n+1}}) - \phi(t_n,X_{t_n}) = \big[\phi(t_{n+1},X_{t_n}) - \phi(t_n,X_{t_n})\big] \\
+ \big[\phi(t_{n+1},X_{t_{n+1}}) -  \phi(t_{n+1},X_{t_n})\big].
} 
The first difference on the right-hand side equals $\int_{t_n}^{t_{n+1}} \pl_s \phi(s, X_{t_n}) ds$, while 
the second difference is computed by formula \rf{ito1}. Assume, the mesh of the partition goes to zero as $n\to \infty$.
Then, summing identities \rf{successive} and letting $n\to\infty$,
we arrive at formula \rf{ito}.
 Indeed, the convergence of the stochastic integrals holds in $L_2(\Om)$ 
by Lebesgue's dominated convergence theorem, implying the convergence almost surely for a subsequence.
Further, in the term containing the time derivative $\pl_s\phi$,
we have to take into account that  $X_t$ has c\`adl\`ag paths.
\end{proof}
Let $\mc S$ denote the class of processes $(x_t, y_t, z_t,\td z_t)$ with values in $\Rnu^n$, $\Rnu^m$, $\Rnu^{m\x n}$,
and $L_2(\nu,{\Rnu^l_*}\to \Rnu^m)$, respectively, such that  $x_t, y_t$, and $z_t$ are $\mc F_t$-adapted,
$\td z_t$ is $\mc F_t$-predictable, and
\aaa{
\lb{as3}
\sup_{t\in[0,T]}\big\{ \E|x_t|^2 + \E|y_t|^2\big\} + \int_0^T\hspace{-1mm} \big( \E |z_t|^2 + 
\E\|\td z_t\|^2_\nu\big) dt < \infty.
}
The main result of this work is the following.
 \begin{thm}
 \lb{mainThm}
  Assume (B1)--(B8). Then, there exists  a solution $(X_t, Y_t, Z_t, \td Z_t)$ to
  FBSDE \rf{eqFBSDE}, such that $X_t$ is a c\`adl\`ag solution to 
 \mmm{
X_t = x + \int_0^t f\big(s,X_s,\te(s,X_s),\pl_x\te(s,X_s) \sg(s,X_s,\te(s,X_s)), \tet_\te (s,X_s) \big)ds\\ 
+\int_0^t \sg (s,X_s,\te(s,X_s)) dB_s  + \int_0^t \int_{\Rnu^l_*} \ffi(s,X_{s-},\te(s,X_{s-}),y) \td N(ds\, dy),
\label{FSDE}
} 
where $\te(t,x)$ is the unique $\C^{1,2}_b([0,T],\Rnu^n)$-solution to problem
\rf{eqPIDEFBSDE1}, whose existence was established by Theorem \ref{PIDEsolution},
and $\tet_\te$ is given by \rf{nl}.
Furthermore, $Y_t$, $Z_t$, and $\td Z_t$ are explicitly expressed via $\te$ by the formulas
\aaa{
\lb{link}
Y_t= \te(t,X_t), \quad  Z_t = \pl_x\te(t,X_{t})\sg(t,X_{t},\te(t,X_{t})), \; \text{and} \quad
\td Z_t = \tet_\te(t,X_{t-}).
}
Moreover, the solution $(X_t,Y_t,Z_t,\td Z_t)$ is unique in the class $\mc S$.
 \end{thm}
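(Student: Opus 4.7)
The plan is to implement a jump-diffusion version of the four step scheme of \cite{ma1}. The single external ingredient is the unique $\C^{1,2}_b([0,T]\x\Rnu^n)$-solution $\te$ of \rf{eqPIDEFBSDE1} furnished by Theorem \ref{PIDEsolution}; by the second remark after that theorem, $\te$ in fact lies in $\C^{1+\al/2,2+\al}_b([0,T]\x\Rnu^n)$ for some $\al\in(0,\beta)$, so $\te,\pl_t\te,\pl_x\te,\pl^2_{xx}\te$ are bounded and $\pl_x\te$ is Lipschitz in $x$.

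Next, I consider the decoupled forward SDE \rf{FSDE}. Its effective coefficients $\td\sg(s,x)=\sg(s,x,\te(s,x))$, $\td\ffi(s,x,y)=\ffi(s,x,\te(s,x),y)$, and $\td f(s,x)=f(s,x,\te(s,x),\pl_x\te(s,x)\sg(s,x,\te(s,x)),\tet_\te(s,x))$ are bounded and globally Lipschitz in $x$ uniformly in $s$: (B6) and the boundedness of $\pl_x\te$ handle $\td\sg$, (B2) together with $\nu(Z)<\infty$ handles $\td\ffi$ as an $L_2(\nu,Z\to\Rnu^n)$-valued map, and (B8) combined with the bounded $x$-derivatives of $\te$, $\pl_x\te\,\sg(\te)$ and $\tet_\te$ (the latter being a consequence of (A8') as verified within the proof of Theorem \ref{PIDEsolution}) handles $\td f$. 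Classical jump-SDE theory then produces a unique strong c\`adl\`ag solution $X_t$. With $X_t$ in hand, I define $Y_t,Z_t,\td Z_t$ by \rf{link} and apply Lemma \ref{lem19} to $\te(t,X_t)$: since by \rf{nl} the jump of $\te(\fdot,X)$ at time $s$ equals $\tet_\te(s,X_{s-})(y)$, the martingale parts of $d\te(t,X_t)$ are exactly $Z_s\,dB_s$ and $\int\td Z_s(y)\,\td N(ds,dy)$, while the finite-variation drift
\aa{
\pl_t\te+(\pl_x\te,f)+\tfrac12\tr(\pl^2_{xx}\te\,\sg\sg^\top)+\int_{\Rnu^l_*}\!\big[\tet_\te(y)-(\pl_x\te,\ffi)\big]\nu(dy)
}
is identically $-g(s,X_s,\te,\pl_x\te\,\sg,\tet_\te)$ by the PIDE \rf{eqPIDEFBSDE1}. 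Combined with $\te(T,X_T)=h(X_T)$ this gives the BSDE and the representations \rf{link}.

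For uniqueness in $\mc S$, let $(X',Y',Z',\td Z')\in\mc S$ be any solution and set $\bar Y_t=Y'_t-\te(t,X'_t)$, $\bar Z_t=Z'_t-\pl_x\te(t,X'_t)\sg(t,X'_t,Y'_t)$, and $\bar{\td Z}_t(y)=\td Z'_t(y)-[\te(t,X'_{t-}+\ffi(t,X'_{t-},Y'_{t-},y))-\te(t,X'_{t-})]$. Applying Lemma \ref{lem19} to $\te(t,X'_t)$ along the $X'$-dynamics, subtracting the BSDE satisfied by $Y'$, and substituting the PIDE at $(t,X'_t)$ with the frozen value $u=\te(t,X'_t)$ produces
\aa{
d\bar Y_t=\Lambda_t\,dt+\bar Z_t\,dB_t+\int_{\Rnu^l_*}\bar{\td Z}_t(y)\,\td N(dt,dy),\qquad\bar Y_T=0,
}
where $\Lambda_t$ collects the differences of $f$, $g$, $\sg\sg^\top$ and a compensator correction evaluated at $(Y',Z',\td Z')$ versus $(u,\pl_x\te\,\sg(u),\tet_\te)$. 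Once the Lipschitz bound $|\Lambda_t|\lt C(|\bar Y_t|+|\bar Z_t|+\|\bar{\td Z}_t\|_\nu)$ is established, the standard BSDE stability estimate (It\^o on $e^{\beta t}|\bar Y_t|^2$ with $\beta$ large, expectation and Gronwall, with a stopping-time localization if required to remain within the $L_2$-class $\mc S$) forces $\bar Y\equiv 0$, $\bar Z\equiv 0$, $\bar{\td Z}\equiv 0$; then $Y'=\te(\fdot,X')$, so $X'$ satisfies \rf{FSDE} and the pathwise uniqueness from the second paragraph gives $X'=X$, whence $Y'=Y$, $Z'=Z$, $\td Z'=\td Z$. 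The hard part is precisely the Lipschitz control of $\Lambda_t$, above all of the compensator correction
\aa{
\int_{\Rnu^l_*}\!\big[\te(t,X'_{t-}+\ffi(t,X'_{t-},Y'_{t-},y))-\te(t,X'_{t-}+\ffi(t,X'_{t-},u,y))-(\pl_x\te(t,X'_{t-}),\ffi(t,X'_{t-},Y'_{t-},y)-\ffi(t,X'_{t-},u,y))\big]\nu(dy),
}
which I plan to control by a mean value argument on $a\mto\te(t,X'_{t-}+a)-(\pl_x\te(t,X'_{t-}),a)$ (whose derivative $\pl_x\te(t,X'_{t-}+a)-\pl_x\te(t,X'_{t-})$ is of order $|a|$ by the boundedness of $\pl^2_{xx}\te$ from the first paragraph) combined with the $L_2(\nu)$-Lipschitzness of $\ffi$ in $u$ from (B2); the resulting growth-in-$Y'$ remainder is then absorbed in the Gronwall step by exploiting the $L_2$-integrability of $Y'$ built into $\mc S$ together with the martingale-representation nature of the $\bar Z,\bar{\td Z}$ terms.
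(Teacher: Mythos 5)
Your existence argument follows the paper exactly: solve the decoupled forward SDE \rf{FSDE} (whose coefficients are Lipschitz and bounded because $\te$ and its first two $x$-derivatives are bounded), then apply the It\^o formula of Lemma~\ref{lem19} to $\te(t,X_t)$ and use the PIDE \rf{eqPIDEFBSDE1} to recover the BSDE and the representations \rf{link}. Where you diverge from the paper is uniqueness. The paper defines $(Y''_t,Z''_t,\td Z''_t)$ by \rf{link} via $\te$ and $X'_t$, asserts that $(Y',Z',\td Z')$ and $(Y'',Z'',\td Z'')$ both solve the BSDE in \rf{eqFBSDE} with $X'$ frozen, and invokes Lemma~2.4 of \cite{tang} to identify them; pathwise uniqueness of $X$ then closes the loop. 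Your proposal instead does the Ma--Protter--Yong style direct stability estimate on $\bar Y = Y'-\te(\cdot,X')$ via Gronwall, explicitly isolating and bounding the jump-compensator correction. These are genuinely different routes, and yours is the more self-contained one: the paper's claim that $(Y'',Z'',\td Z'')$ solves the BSDE with $X'$ frozen is not immediate --- applying It\^o to $\te(t,X'_t)$ along the $X'$-dynamics produces martingale integrands $\pl_x\te(s,X'_{s-})\sg(s,X'_{s-},Y'_{s-})$ and $\te(s,X'_{s-}+\ffi(s,X'_{s-},Y'_{s-},\cdot))-\te(s,X'_{s-})$, which agree with $(Z'',\td Z'')$ only after one already knows $Y'\equiv\te(\cdot,X')$ --- so justifying that claim rigorously requires essentially the computation you carry out, plus the Gronwall step. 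What the paper's route buys is brevity and the re-use of a black-box BSDE uniqueness theorem once the identification is granted; what your route buys is that it makes the necessary cancellation visible and does not presuppose the Markovian form of $(Y',Z',\td Z')$.

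One place you should be more careful in the Gronwall step: the bound $|\Lambda_t|\lt C(|\bar Y_t|+|\bar Z_t|+\|\bar{\td Z}_t\|_\nu)$ requires a deterministic $C$, but (B2), (B6), and (B8) only bound derivatives of $\sg,f,g$ on regions of the form $\mc{\td R}_2$ where $|u|$, $|p|$, $\|w\|_\nu$ are bounded, and $Y'_t$ is a priori only in $L_2$. The argument of $\sg$, $f$, $g$ along the path $\la Y'_s+(1-\la)\te(s,X'_s)$ is therefore not confined to such a region, so the local Lipschitz constants cannot be used directly. This is fixable by a truncation or stopping-time localization argument as you allude to, but it deserves an explicit treatment, especially since the increasing function $\td\mu$ in (B6) means the constant in the estimate of $\sg(Y')\sg(Y')^\top-\sg(\te)\sg(\te)^\top$ genuinely depends on $|Y'|$. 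Note also that the same caveat applies to the paper's citation of \cite{tang}, so this is a shared obligation rather than a defect peculiar to your route.
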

\begin{proof}
\textit{Existence.}
First we prove that SDE \rf{FSDE} has a unique c\`adl\`ag solution.
Define $\td f(t,x) = f(t,x,\te(t,x),\pl_x\te(t,x) \sg(t,x,\te(t,x)), \tet_\te (t,x) )$,
$\td \sg(t,x) = \sg(t,x,\te(t,x))$, and $\td \ffi(t,x,y) = \ffi(t,x,\te(t,x),y)$. With this notation, SDE \rf{FSDE} becomes
\aaa{
\lb{SDE-mod}
X_t = x+ \int_0^t \td f(t,X_s) ds + \int_0^t \td \sg(s,X_s) dB_s + \int_0^t \int_{\Rnu^l_*} \td\ffi(s,X_{s-},y) \td N(ds\, dy).
}
Note that since $\te$ is of class $\C^{1,2}_b([0,T]\x\Rnu^n)$,  (B1) and (B5) imply that $\td f(t,x)$, $\td \sg(t,x)$,
$\int_Z \td \ffi(t,x,y) \nu(dy)$ are bounded. 
Further, (B6) implies the boundedness of $\pl_x \td \sg(t,x)$, while (B1), (B2), (B6), and (B8) imply 
the boundedness of $\pl_x \td f(t,x)$.
Furthermore, (B2) implies the boundedness of $\pl_x \int_{\Rnu^l_*} \td \ffi(t,x,y)\nu(dy)$. 
Therefore, the Lipschitz condition and the linear growth conditions,
required for the existence and uniqueness of a c\`adl\`ag adapted solution to \rf{SDE-mod} 
(see \cite{appelbaum}, Theorem 2.6.9, p. 374), are fulfilled.
By Theorem 2.6.9 in \cite{appelbaum} (more precisely, by its time-dependent version considered in Exercise 2.6.10, p. 375),
there exists a unique c\`adl\`ag solution $X_t$ to SDE \rf{SDE-mod}.

Further, define $Y_t$, $Z_t$, and $\td Z_t$ by formulas \rf{link}. Applying  
 It\^o's formula (Lemma \ref{lem19}) to $\te(t,X_t)$, we obtain
\mmm{
\label{eqBSDE} 
 \te(t, X_t)= \te(T,X_T)  - \int_t^T \pl_x\te (s,X_{s-}) \sg(s, X_{s-},\te(s,X_{s-})) \, dB_s    \\
- \int_t^T \Big\{ \pl_x\te(s,X_s) f\big(s,X_s,\te(s,X_s),\pl_x\te(s,X_s) \sg(s,X_s,\te(s,X_s)), \tet_\te (s,X_s) \big)   \\
+ \pl_x\te(s,X_s)\int_{\Rnu^l_*}  \ffi(s,X_s,\te(s,X_s), y)\nu(dy) + \pl_s \te (s, X_s)  \\ 
+ \frac12 \tr\big[\pl^2_{xx}\te(s,X_s)  \sg(s,X_s,\te(s, X_s))   \sg(s,X_s,\te(s, X_s))^\top\big]
+ \int_{\Rnu^l_*} \tet_\te(s,X_s)(y)\nu(dy) \Big\} \, ds \\
- \int_t^T  \int_{\Rnu^l_*}\tet_\te(s,X_{s-})(y) \td N(ds, dy). 
}
Since $\te(t,x)$ is a solution to PIDE \rf{eqPIDEFBSDE1}, then $Y_t$, $Z_t$, and $\td Z_t$, defined by \rf{link}, 
are solution processes for the BSDE
in \rf{eqFBSDE}. Furthermore, $Y_t$ and $Z_t$ are c\`adl\`ag, and $\td Z_t$ is left-continuous with right limits.

\textit{Uniqueness.}
Suppose $(X'_t,Y'_t, Z'_t, \td Z'_t)$ is another solution to FBSDE \rf{eqFBSDE} satisfying \rf{as3}.
 As before, $\te(t,x)$ is the unique  $\C^{1,2}_b([0,T],\Rnu^n)$-solution to PIDE \rf{eqPIDEFBSDE1}.
 Define $(Y''_t, Z''_t,\td Z''_t)$ by formulas \rf{link}
via $\te(t,x)$ and $X'_t$. Therefore, $(Y'_t, Z'_t, \td Z'_t)$ and  $(Y''_t, Z''_t,\td Z''_t)$ are two solutions to the BSDE in 
\rf{eqFBSDE} with the process $X'_t$ being fixed. By the results of \cite{tang} (Lemma 2.4, p.1455), the solution to the BSDE
in \rf{eqFBSDE} is unique in the class of processes $(Y_t,Z_t,\td Z_t)$ whose squared norm 
$\sup_{t\in[0,T]}\E|Y_t|^2 + \int_0^T\hspace{-1mm} \big( \E |Z_t|^2 + \E\|\td Z_t\|^2_\nu\big) dt$ is finite.
Without loss of generality,
we can assume that $Y'_t$ is  c\`adl\`ag by considering, if necessary, its  c\`adl\`ag modification.
Since both $Y'_t$ and $Y''_t$ are  c\`adl\`ag, then $Y'_t = \te(t,X'_t)$ for all $t\in [0,T]$ a.s.
Further, $Z'_t = Z''_t$  and $\td Z'_t = \td Z''_t$ as elements of $L_2([0,T] \x \Om \to \Rnu^m)$
and $L_2([0,T] \x \Om \to E)$, respectively, where $E= L_2(\nu, \Rnu^l_* \to\Rnu^m)$. 
Therefore, 
$\la \ox \PP$-a.e. ($\la$ is the Lebesgue measure on $\Rnu$), $f(t,X'_t, Y'_t,Z'_t, \td Z'_t) = f(t,X'_t, Y''_t,Z''_t, \td Z''_t)$
 and
consequently, $\int_0^t f(s,X'_s, Y'_s, Z'_s, \td Z'_s) ds =  \int_0^t f(s,X'_s, Y''_s, Z''_s, \td Z''_s) ds$ for each $t\in [0,T]$ a.s.
Hence, $X'_t$ is a solution to SDE \rf{FSDE}. By uniqueness, $X'_t = X_t$.
Thus, the quadruple $(X_t,Y_t,Z_t,\td Z_t)$ is unique in the class $S$.
\end{proof}
\begin{rem}
\rm 
Finally, we would like to remark that the monotonicity assumption in \cite{wu99} (Assumption H3.2, p. 436) is not fulfilled
for FBSDE \rf{eqFBSDE} if $m\ne n$. 
Indeed, rewrite H3.2 in our notation and separate the terms on its left-hand side. We obtain
\mmm{
\lb{H32}
-(\Dl g, \Dl x) + (\Dl f,\Dl y) + (\Dl \sg, \Dl z) + \int_{\Rnu^l_*}(\Dl \ffi(\xi), \Dl \td z(\xi)) \nu(d\xi) \\
\lt -\beta_1|\Dl x|^2 - \beta_2(|\Dl y|^2 + |\Dl z|^2 + \|\Dl \td z\|^2_\nu),
}
where $\beta_1, \beta_2\gt 0$ are constants,
$\Dl x = G(x-x')$, $\Dl y = G^\top(y-y')$, $\Dl z = G^\top(z-z')$, $\Dl \td z = G^\top(\td z-\td z')$,
$\Dl \sg = \sg(t,x,y) - \sg(t,x',y')$, $\Dl \ffi = \ffi(t,x,y,\fdot) - \ffi(t,x',y',\fdot)$, 
$\Dl g = g(t,x,y,z,\td z) - g(t,x',y',z',\td z')$, $\Dl f = f(t,x,y,z,\td z) - f(t,x',y',z',\td z')$, 
and $G$ is an $m\x n$ matrix of full rank. The above inequality is assumed to be
fulfilled for all $(t,x,y,z,\td z)$, $(t,x',y',z',\td z') \in [0,T]\x \Rnu^n \x\Rnu^m\x\Rnu^{m\x n}\x 
L_2(\nu,{\Rnu^l_*}\to \Rnu^m)$.

 Let $m<n$. Then, according to \cite{wu99}, $\beta_2>0$.
Consider \rf{H32} in the case $x=x'$, $y=y'$, $z\ne z'$. Then, the left-hand side of \rf{H32} equals  zero while the right-hand side is negative. 

If $m > n$, we have to additionally assume that $\sg(t,x,y)$ explicitly depends on $y$.
Consider the case $x=x'$, $\td z = \td z'$, $y-y' \in \ker G^\top$, $z\ne z'.$
Then, \rf{H32} implies
\aa{
(G (\sg(t,x,y) - \sg(t,x,y')), z-z') \lt 0.
}
Since $\ker G = \{0\}$, this inequality cannot be fulfilled for all $y,y',z,z'$ as above.

\end{rem}

\section*{Acknowledgments}
The second-named author was supported by the PhD scholarship no. SFRH/BD/51172/2010 from the FCT 
(Funda\c c\~ao para a Ci\^encia e Tecnologia).


\bibliographystyle{plain}

\begin{thebibliography}{99}

\bibitem{appelbaum}
D.~Applebaum. 
\newblock
{\em L\'evy processes and stochastic calculus.} 
\newblock
Cambridge University Press, 2009.

\bibitem{pardoux97} 
G. Barles, R. Buckdahn, and E. Pardoux.
\newblock 
{Backward stochastic differential equations and integral-partial differential equations.}
{\em Stochastics and Stochastic Reports,} 60, 1--2 (1997), pp. 57--83.

\bibitem{CS}
A.B. Cruzeiro, E. Shamarova.
\newblock
{Navier-Stokes equations and forward--backward SDEs on the group of diffeomorphisms of the torus.}
\newblock
{\em Stoch. Proc. Appl.}, 119 (2009), pp. 4034--4060.





\bibitem{friedman}
A. Friedman. 
\newblock {\em Partial differential equations of parabolic type},
\newblock Robert E. Krieger publishing company, 1983.


\bibitem{ls}
D. Gilbarg, N. S. Trudinger, 
\newblock {\em Elliptic partial differential equations of second order}.  
\newblock Springer 1998, 517 p.

 \bibitem{peng}
Y. Hu,  S.~Peng,
 \newblock{Solution of forward--backward stochastic differential equations.}
 \newblock {\em Prob. Theory and Related Fields}, 103 (1995), pp. 273--283.


\bibitem{katriel}
G. Katriel, 
\newblock 
Mountain pass theorems and global homeomorphism theorems.
\newblock {\em Ann. Inst. Henri Poincar\'e},  Vol. 11, n 2 (1994), pp. 189--209.


\bibitem{lady}
 O.A.~Ladyzenskaya, V.A.~Solonnikov, N.N.~Uralceva.
\newblock {\em Linear and Quasi-Linear Equations of Parabolic Type}, vol.~23, 
  {Translations of Mathematical Monographs}.
\newblock American Mathematical Society, 1968.

\bibitem{li17}
J. Li,
\newblock{Mean-field forward and backward SDEs with jumps and associated nonlocal quasi-linear integral-PDEs.}
\newblock{\em Stoch. Proc. Appl.}, 2017

\bibitem{li-peng09}
J. Li, S.G. Peng, 
\newblock{Stochastic optimization theory of backward stochastic differential equations with jumps 
and viscosity solutions of Hamilton-Jacobi-Bellman equations.}
\newblock{\em Nonlinear analysis}, 70 (2009), pp. 1776--1796.


\bibitem{LiWei}
J.~Li{,}~Q. Wei,
\newblock{$L_p$-estimates for fully coupled FBSDEs with jumps.}
\newblock {\em Stoch. Proc. Appl.}, 124 (2014), pp. 1358--1376.
 2014.



\bibitem{ma1}
J. Ma, P.~Protter, J.~Yong,
\newblock Solving forward backward stochastic differential equations
  explicitly: a four step scheme.
\newblock {\em Probability Theory and Related Fields}, 98 (1994), pp. 339--359.




 \bibitem{nualart}
D. Nualart, W. Schoutens,
 \newblock{Backward stochastic differential equations and Feynman-Kac formula 
 for L\'evy processes, with applications in finance.}
 \newblock {\em Bernoulli}, 7(5) (2001), pp. 761--776.
 


\bibitem{pardeux-tang}
\newblock 
E. Pardoux, S. Tang, Forward--backward stochastic differential equations and quasilinear parabolic PDEs,
{\em Probab. Theory Relat. Fields} 114 (1999), pp. 123--150

 
 
 \bibitem{protter}
P.~Protter.
\newblock Stochastic integration and differential equations, a new approach.
\newblock Springer, 1992.


\bibitem{shapiro}
A. Shapiro, 
\newblock On concepts of directional differentiability.
\newblock {\em J Optim Theory Appl.}, Vol 66, No. 3 (1990), pp. 477--487.

 
 \bibitem{tang}
S.~Tang{, } X.~Li,
\newblock Necessary conditions for optimal control of stochastic systems with random jumps.
\newblock {\em SIAM J. Control Optimization}, 32(5) (1994), 1447--1472.


\bibitem{wu99}
Z.~Wu,
\newblock Forward--backward stochastic differential equations with Brownian motion and Poisson process.
\newblock {\em Acta Mathematicae Applicatae Sinica}, 15(4) (1999), 433--443.

\bibitem{wu03}
Z.~Wu,
\newblock Fully coupled FBSDEs with Brownian motion and Poisson process in stopping time duration.
\newblock {\em J. Aust. Math. Soc.}, 74, (2003), pp. 249--266.
\end{thebibliography}

\end{document}